\newtheorem{thm}{Theorem}[section]
\newtheorem{corollary}[thm]{Corollary}
\newtheorem{lemma}[thm]{Lemma}
\newtheorem{proposition}[thm]{Proposition}
\theoremstyle{definition}
\newtheorem{remark}[thm]{Remark}
\newtheorem*{Que}{Question}
\newcommand{\R}{{\mathbb R}}
\newcommand{\N}{{\mathbb N}}
\newcommand{\C}{{\mathbb C}}
\newcommand{\dd}{{\mathrm d}}
\newcommand{\mG}{\mathcal{G}}
\newcommand{\tmG}{\tilde{\mathcal{G}}}
\newcommand{\mB}{\mathcal{B}}
\newcommand{\mS}{\mathcal{S}}
\newcommand{\mA}{\mathcal{A}}
\newcommand{\Fm}{\mathcal{A}}
\newcommand{\mF}{\mathcal{F}}
\newcommand{\lm}{\lambda}
\begin{document}

\numberwithin{equation}{section}
\title[High dimensional maximal functions for Gaussians, balls, and spheres]{On high dimensional maximal functions associated to Gaussians, balls, and spheres}

\author{Valentina Ciccone}
\address[Valentina Ciccone]{Institute of Mathematics, Polish Academy of Sciences, Śniadeckich 8, 00-656 Warszawa, Poland}
\email{vciccone@impan.pl}

\author{B{\l}a{\.z}ej Wr{\'o}bel}
\address[B{\l}a{\.z}ej Wr{\'o}bel]{
Institute of Mathematics
	of the Polish Academy of Sciences\\
	Śniadeckich 8\\
	00-656 Warsaw\\
	Poland \& Institute of Mathematics\\
	University of Wroc{\l}aw\\
	Plac Grun\-waldzki 2\\
	50-384 Wroc{\l}aw\\
	Poland}
\email{blazej.wrobel@math.uni.wroc.pl}

\subjclass[2020]{42B25}
\keywords{Gaussian maximal operator, Hardy-Littlewood maximal operator, spherical maximal operator, high dimensional phenomena\\
\indent This article was merged into \url{https://arxiv.org/abs/2607.06041}}

\begin{abstract}
We prove that for each $p\in (1,\infty),$ the norms on $L^p(\mathbb{R}^d)$  of the maximal functions associated to Gaussians (heat semigroup), balls (Hardy-Littlewood averages), and spheres (spherical averages) converge, as the dimension $d\to \infty,$ to the same quantity $\lambda(p)$.
This is derived from the fact that the norms on $L^2(\mathbb{R}^d)$ of the maximal functions corresponding to the differences of Gaussian, ball, and spherical averages converge to zero with the dimension $d.$ The fact is proved with the aid of estimates for Fourier multiplier symbols corresponding to these averages, 
 a general principle that allows us to control the norm of a maximal function corresponding to a Fourier multiplier operator by the norm of the multiplier operator itself, and concentration properties of high dimensional Gaussian random vectors. 
Moreover, relying on the properties of the $d$-dimensional maximal function for the heat semigroup $\mathcal{G}_\ast^d$, we show that $\lambda(p)$ satisfies
$$
\frac25\frac{p}{p-1}\le\|\mathcal{G}_\ast^1\|_{L^p(\mathbb{R})\rightarrow L^p(\mathbb{R})}\le \lambda(p)\le \frac{p}{p-1}.
$$
In particular, to obtain the middle inequality we show that the norms on $L^p(\mathbb{R}^d)$ of the maximal function for the heat semigroup are non-decreasing in $d.$
\end{abstract}

\maketitle

\section{Introduction}

\subsection{Statement of the main results} 
For $t>0$ and for every $x\in\mathbb{R}^d$ we define the Hardy-Littlewood  averaging operator
\begin{align*}
    \mB_tf(x)=\mB_t^df(x)= \frac{1}{|B_t|}\int_{B_t}f(x-y)\dd y~, \qquad f\in L^1_{loc}(\mathbb{R}^d)~,
\end{align*}
where $B_t=\lbrace x\in\mathbb{R}^d: \, t^{-1}x\in B\rbrace$, $B$ is the Euclidean ball in $\mathbb{R}^d$ with center at the origin and radius one, and $|U|$ denotes the $d$-dimensional Lebesgue volume of a set $\subset\mathbb{R}^d$.
Similarly, we define the spherical averaging operator
\begin{align*}
\mS_tf(x)=\mS^d_tf(x)= \frac{1}{\sigma(\mathbb{S}^{d-1})}\int_{\mathbb{S}^{d-1}}f(x-t\omega)\dd \sigma(\omega)~, \qquad f\in\mathcal{S}(\mathbb{R}^d)~.
\end{align*}
Here $\sigma$ is the surface measure on the $(d-1)$-dimensional unit sphere. We let $\mG_t$ be the classical heat semigroup on $\R^d$ given by convolution with the Gaussian kernel  
$$\mG_tf(x)= \mG_t^d f(x)=\frac{1}{(4\pi t)^{d/2}}\int_{\R^d}e^{-|y|^2/(4t)} f(x-y)\dd y~.$$ We shall also need a re-parametrization of $\mG_t$ given by
\[
\tmG_t=\mG_{t/(2d)}.
\]

Let $\mA_t,$ $t>0,$ be any of the above averaging operators or their differences and denote by $\mA_*$ the corresponding maximal operator
\[
\mA_* f(x)=\sup_{t>0}|\mA_t f(x)|.
\]
Note that then $\mG_*=\tmG_*.$ For each  $p\in (1,\infty]$ we let
$\|\mA_*\|_{L^p(\R^d)}$ be the operator norm on $L^p(\R^d)$ of $\mA_*$. In other words $\|\mA_*\|_{L^p(\R^d)}$ is the best constant in the inequality
\[
\Vert \sup_{t>0} |\mA_t f|\Vert_{L^p(\R^d)}\le \|\mA_*\|_{L^p(\R^d)}\|f\|_{L^p(\R^d)}.
\]
It is well known that $\|\mB_*\|_{L^p(\R^d)}$ and $\|\mG_*\|_{L^p(\R^d)}$ are finite for all $d\in \N$ and $p\in (1,\infty).$ Furthermore, $\|\mS_*\|_{L^p(\R^d)}$ is finite for $p> d/(d-1).$ This is due to Stein \cite{St_spher} in dimensions $d\ge 3$ and due to Bourgain \cite{Bo1} in dimension $d=2.$ We remark that for each fixed $p\in (1,\infty)$ we have $p>d/(d-1)$ when $d$ is large enough and in particular $\|\mS_*\|_{L^p(\R^d)}$ is finite when $d\to \infty.$

In view of the pointwise estimate 
\begin{equation}
\label{eq: point est}
    \sup_{t>0} |\mG_t^d f(x)| \; \leq \sup_{t>0} \; |\mB_t^df(x)| \leq \; \sup_{t>0} \; |\mS^d_tf(x)| ~, 
\end{equation}
it immediately follows that
\begin{align}\label{eq_gauss_leq_ball_leq_sphere}
   \|\mG_*\|_{L^p(\R^d)} \, \leq \,  \|\mB_*\|_{L^p(\R^d)} \, \leq \,    \|\mS_*\|_{L^p(\R^d)} ~.
\end{align}
Our main goal in this work is to show that, in the limit as $d$ tends to infinity, \eqref{eq_gauss_leq_ball_leq_sphere} becomes an equality. This will be a consequence of the stronger statement in \eqref{eq: diff Lp}, which concerns the behavior of the maximal functions corresponding to the differences of Gaussian, ball, and spherical averages.
\begin{thm}
    \label{thm: limit}
    For each $p\in (1,\infty)$ we have
    \begin{equation}
 \label{eq: diff Lp}
 \lim_{d\to \infty}\Vert  (\mS-\tmG)_*\Vert_{L^p(\mathbb{R}^d)}= \lim_{d\to \infty}\Vert  (\mS-\mB)_*\Vert_{L^p(\mathbb{R}^d)}=\lim_{d\to \infty}\Vert  (\tmG-\mB)_*\Vert_{L^p(\mathbb{R}^d)}=0
 \end{equation}
and, consequently, 
    \begin{equation}
    \label{eq: limno1}
    \lim_{d\to\infty}   \|\mG_*\|_{L^p(\R^d)} =\lim_{d\to\infty}   \|\mB_*\|_{L^p(\R^d)}=\lim_{d\to\infty}   \|\mS_*\|_{L^p(\R^d)},
    \end{equation}
    i.e.\ the above limits exist and are all equal to the same quantity $\lm(p).$ Furthermore,   $\lm(p)$ satisfies  
    \begin{equation}
    \label{eq: lm bounds}
   \max\left\lbrace \frac25 \frac{p}{p-1}, 1 \right\rbrace \le \|\mG_\ast^1\|_{L^p(\mathbb{R})}\le \lm(p)\le \frac{p}{p-1}~.
    \end{equation}
\end{thm}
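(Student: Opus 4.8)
The plan is to prove the three statements in Theorem~\ref{thm: limit} in the following order: first \eqref{eq: diff Lp}, then deduce \eqref{eq: limno1}, and finally establish the bounds \eqref{eq: lm bounds}. Let me describe each piece.

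For \eqref{eq: diff Lp}, the strategy I would adopt is to pass to $L^2$ and exploit the Fourier transform side. Each of the averaging operators $\mB_t^d$, $\mS_t^d$, $\tmG_t$ is a Fourier multiplier operator whose symbol is a radial function of $t|\xi|$: for the ball it is (a rescaled Bessel function) $m_{\mB}(t|\xi|)$, for the sphere $m_{\mS}(t|\xi|) = c_d |t\xi|^{-(d-2)/2}J_{(d-2)/2}(t|\xi|)$, and for the re-parametrized Gaussian $m_{\tmG}(t|\xi|) = e^{-t^2|\xi|^2/(2d)}$. The key analytic input I would establish is a quantitative estimate showing that, after the $2d$-scaling built into $\tmG$, these three symbols are uniformly close to each other (and in fact close to $e^{-s^2/2}$-type behavior near $s=0$ and small for large $s$), with a closeness that improves as $d\to\infty$ — this is where the concentration of the Gaussian vector and the known asymptotics of Bessel functions in large order come in. Then I would invoke the general principle mentioned in the abstract (the result stated earlier that controls the $L^2$ norm of a maximal function attached to a Fourier multiplier family by the norm of the multiplier operator): applied to the difference families $(\mS-\tmG)_t$, $(\mS-\mB)_t$, $(\tmG-\mB)_t$, it reduces the maximal estimate on $L^2(\R^d)$ to an estimate on the corresponding multiplier symbols, which tends to $0$. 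This gives \eqref{eq: diff Lp} for $p=2$. To get it for all $p\in(1,\infty)$, I would interpolate: one has the trivial uniform-in-$d$ $L^\infty$ bounds on the difference maximal operators (each averaging maximal operator has norm $\le$ something like $1$ or $2$ on $L^\infty$, hence the differences are bounded by a constant independent of $d$), and on the other side a uniform-in-$d$ weak-type or $L^{p_0}$ bound for $p_0$ slightly above $1$ — here one must be slightly careful about the spherical maximal operator, but since for fixed $p_0>1$ we have $p_0 > d/(d-1)$ once $d$ is large, Stein's theorem applies with bounds that can be taken uniform in large $d$. Marcinkiewicz interpolation between the $L^2$ smallness and these uniform endpoint bounds then yields smallness for every $p$. \emph{The main obstacle} I anticipate is precisely this last uniformity: controlling $\|\mS_*\|_{L^{p_0}(\R^d)}$ (or the difference operators) uniformly for all large $d$, since Stein's constants a priori depend on $d$; I would expect the paper to route around this either by a more hands-on argument or by using the already-proved $L^2$ smallness together with $\|\mB_*\|$ bounds to transfer control to the sphere.

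For \eqref{eq: limno1}, once \eqref{eq: diff Lp} is known, it is essentially the triangle inequality. We have $\|\mB_*\|_{L^p} \le \|\tmG_*\|_{L^p} + \|(\tmG-\mB)_*\|_{L^p}$ and similarly with the roles permuted and with $\mS$; combining these with the ordering \eqref{eq_gauss_leq_ball_leq_sphere} gives that all three sequences of norms differ by $o(1)$ as $d\to\infty$. It then remains to see that the common limit exists. For this I would use the monotonicity claim flagged in the abstract: $d\mapsto \|\mG_*^d\|_{L^p(\R^d)}$ is non-decreasing (proved by a tensorization/lifting argument — given a function on $\R^d$, one adds a dummy variable and observes that the $(d{+}1)$-dimensional heat maximal function dominates an average that reproduces the $d$-dimensional one, or alternatively one realizes $\R^d$ as a limiting slice of $\R^{d+1}$). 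A bounded monotone sequence converges, so $\lim_d \|\mG_*^d\|$ exists; call it $\lm(p)$; by the $o(1)$ comparison the ball and sphere norms converge to the same $\lm(p)$.

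For the bounds \eqref{eq: lm bounds}: the upper bound $\lm(p)\le p/(p-1)$ should follow because $\|\mG_*^d\|_{L^p(\R^d)}\le p/(p-1) = p'$ \emph{uniformly in} $d$ — the heat semigroup is a symmetric diffusion (sub-Markovian) semigroup, so Stein's maximal theorem for such semigroups gives the $L^p$ bound by $p'$ with a dimension-free constant; passing to the limit preserves the inequality, and since the ball and Gaussian limits coincide we get $\lm(p)\le p'$. (If one prefers, the ball side gives $\|\mB_*^d\|$ which is \emph{not} known dimension-free with constant $p'$, so going through the Gaussian is the right move.) The middle inequality $\|\mG_*^1\|_{L^p(\R)}\le\lm(p)$ is exactly the $d=1$ instance of the monotonicity: $\|\mG_*^1\|_{L^p(\R)}\le \|\mG_*^d\|_{L^p(\R^d)}$ for all $d$, hence $\le \lm(p)$. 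Finally, the lower bound $\lm(p)\ge \max\{\tfrac25 p', 1\}$: the bound $\lm(p)\ge 1$ is trivial since $\mG_0 = \mathrm{Id}$ in the limit $t\to 0$ (more precisely $\mG_*f \ge |f|$ a.e. after taking $t\to0^+$, by Lebesgue differentiation, so the operator norm is $\ge 1$). For the $\tfrac25 p'$ bound I would test $\|\mG_*^1\|_{L^p(\R)}$ against an explicit family of functions — the natural choice is a narrow bump or, better, a function like $\mathbf 1_{(0,1)}$ or a small power $|x|^{-1/p+\epsilon}$ near the origin, compute the one-dimensional heat maximal function of it, and extract the blow-up rate $\sim p'$ as $p\to 1$, with $\tfrac25$ emerging as a clean admissible constant after optimizing the test function. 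The technical work here is the one explicit one-dimensional computation of $\sup_t \mG_t^1 f$ for the chosen $f$; I expect this to be elementary but the only genuinely computational part of the theorem, and getting the constant as large as $\tfrac25$ (rather than something smaller) is where care in the choice of $f$ pays off.
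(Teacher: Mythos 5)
Your overall architecture matches the paper's: $L^2$ smallness of the difference maximal operators via the general multiplier principle, Marcinkiewicz interpolation against uniform-in-$d$ endpoint bounds to get all $p\in(1,\infty)$, the triangle inequality for \eqref{eq: limno1}, Gaussian monotonicity for the middle inequality in \eqref{eq: lm bounds}, and the symmetric-diffusion-semigroup bound $p/(p-1)$ for the upper bound. The ``obstacle'' you flag — uniformity in $d$ of the $L^{q}$ bounds for the spherical (difference) maximal operators — is resolved in the paper simply by invoking the known dimension-free estimates \eqref{eq: dimfree} of Stein and Stein--Str\"omberg, which the introduction records as an input; no new argument is needed there. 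One small divergence: for the existence of the common limit you use that $d\mapsto\|\mG_*^d\|_{L^p(\R^d)}$ is non-decreasing and bounded above by $p/(p-1)$, whereas the paper uses that $d\mapsto\|\mS_*\|_{L^p(\R^d)}$ is non-increasing (a known slicing inequality). Both routes are correct.

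The one genuine gap is the lower bound $\lambda(p)\ge\frac25\frac{p}{p-1}$. You leave the choice of test function open, and the candidates you name would not deliver the constant $\frac25$: the paper itself computes that testing $\mG_*^1$ on the homogeneous function $|x|^{-1/p}$ yields only $\frac{2^{(p-1)/p}}{\sqrt{2\pi e}}\frac{p}{p-1}$, and in the only range where the bound is nontrivial (namely $p<5/3$, since otherwise $\frac25\frac{p}{p-1}\le 1$ and the trivial bound suffices) this prefactor is below $0.33$. The paper instead tests on the Gaussian $\gamma(x)=(4\pi)^{-1/2}e^{-x^2/4}$, computes $\sup_{t>0}\mG_t\gamma$ in closed form (the supremum in $u=1+t$ is attained at $u=x^2/2$ for $|x|>\sqrt2$ and at $u=1$ otherwise), obtains $\|\mG_*^1\|_{L^p(\R)}\ge\bigl(2p/(\pi e^p)\bigr)^{1/(2p)}\bigl(p/(p-1)\bigr)^{1/p}$, and then needs a separate elementary optimization (Lemma \ref{lem: c>0.4}) to verify that the infimum over $p$ of the resulting constant exceeds $0.4$. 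So the constant $2/5$ is not ``clean''; it is the outcome of a specific, non-obvious choice of test function together with a calculus verification, neither of which your proposal supplies.
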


From the work of Stein \cite{St82} and Stein and Strömberg  \cite{SS83} one may deduce
 dimension-free estimates for each of the maximal functions under consideration. Namely, for each $p\in (1,\infty)$ we have
\begin{equation}
\label{eq: dimfree}
  \limsup_{d\to\infty}   \|\mG_*\|_{L^p(\R^d)} <\infty,\quad\limsup_{d\to\infty}   \|\mB_*\|_{L^p(\R^d)}<\infty,\quad \limsup_{d\to\infty}   \|\mS_*\|_{L^p(\R^d)}<\infty.
\end{equation}
Inequalities  \eqref{eq: dimfree} together with an interpolation argument will reduce \eqref{eq: diff Lp} and \eqref{eq: limno1} in Theorem \ref{thm: limit} to the following result. We remark that due to \eqref{eq: point est} in order to prove \eqref{eq: limno1} it would suffice to justify only the first estimate in \eqref{eq: diff L2 S-G} below.
\begin{thm}\label{thm_gaussian_sphere}
Let $d\ge 3.$ There exists a universal constant $C>0$ such that 
  \begin{equation}
 \label{eq: diff L2 S-B}
 \Vert(\mS-\mB)_*\Vert_{L^2(\mathbb{R}^d)}\leq C d^{-1/4}.
 \end{equation}
Moreover, for each $\alpha>0$ there exists a constant $C(\alpha)$ depending only on $\alpha>0$ and such that 
 \begin{equation}
 \label{eq: diff L2 S-G}
    \Vert  (\mS-\tmG)_*\Vert_{L^2(\mathbb{R}^d)}\leq C(\alpha) d^{-1/16+\alpha},\qquad \Vert(\tmG-\mB)_*\Vert_{L^2(\mathbb{R}^d)}\leq C(\alpha) d^{-1/16+\alpha}.
 \end{equation}
\end{thm}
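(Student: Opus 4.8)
The plan is to pass to the Fourier side, where the three families become radial Fourier multipliers, and to reduce the whole statement to one–dimensional estimates for the defining radial symbols. Write $\widehat{\mS_t^d f}(\xi)=\mu_S^d(t|\xi|)\,\hat f(\xi)$, $\widehat{\mB_t^d f}(\xi)=\mu_B^d(t|\xi|)\,\hat f(\xi)$ and $\widehat{\tmG_t f}(\xi)=\mu_G^d(t|\xi|)\,\hat f(\xi)$, where $\mu_G^d(r)=e^{-r^2/(2d)}$ (this normalization is exactly what the re‑parametrization $\tmG$ achieves) and $\mu_S^d,\mu_B^d$ are the normalized Bessel profiles $\mu_S^d(r)=2^{\nu}\Gamma(\nu+1)\,r^{-\nu}J_{\nu}(r)$ with $\nu=(d-2)/2$, respectively the same expression with $\nu=d/2$. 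Three structural facts drive the argument: (i) the averaging identity $\mu_B^d(r)=d\int_0^1\mu_S^d(\rho r)\,\rho^{d-1}\,\dd\rho$, which says that a ball average is a radial average of spherical averages; (ii) all three profiles are smooth, even, equal to $1$ at $r=0$ with next term $-\tfrac{r^2}{2d}+O(r^2/d^2)$, and they all decay as $r\to\infty$, so that each of the difference profiles $\mu_S^d-\mu_B^d$, $\mu_S^d-\mu_G^d$, $\mu_G^d-\mu_B^d$ vanishes both at $0$ and at $\infty$; and (iii) the differentiation identities $\partial_r\mu_S^d(r)=-\tfrac rd\,\mu_S^{d+2}(r)$ and $\partial_r\mu_G^d(r)=-\tfrac rd\,\mu_G^d(r)$. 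Since $(\mA-\mA')_t$ is then a Fourier multiplier operator with radial symbol $(\mu_\mA^d-\mu_{\mA'}^d)(t|\xi|)$, the general maximal–multiplier principle established earlier applies and, with a dimension–free constant, bounds $\Vert(\mA-\mA')_*\Vert_{L^2(\R^d)}$ by suitable scale–invariant $L^2$ quantities built from $\mu_\mA^d-\mu_{\mA'}^d$ and its dilation derivative $r\,\partial_r(\mu_\mA^d-\mu_{\mA'}^d)$. It therefore suffices to produce quantitative (in $d$) decay for these one–dimensional quantities.

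For the sphere–ball difference I would exploit (i): writing $R$ for the random variable with density $d\rho^{d-1}$ on $[0,1]$ — equivalently $R=|U|$ with $U$ uniform in the unit ball — the identity becomes
\[
\mu_S^d(r)-\mu_B^d(r)=\mathbb{E}\big[\mu_S^d(r)-\mu_S^d(Rr)\big],
\]
and $R$ concentrates at $1$ with $\mathbb{E}(1-R)\asymp 1/d$ and $\mathbb{E}(1-R)^2\asymp 1/d^2$. On a moderate range of $r$ one bounds the increment by the mean value theorem, using $\sup_s|\partial_s\mu_S^d(s)|\le\mathbb{E}|\omega_1|\asymp d^{-1/2}$ (here $\omega_1$ is a coordinate of a uniform point of $\mathbb S^{d-1}$); on the complementary range one uses instead that $\mu_S^d(\rho r)$ is already small for every $\rho$ bounded below, which is where the decay of $J_\nu$ enters. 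Splitting the $\dd r/r$ integral at a threshold of order $\sqrt d$ and adding the contributions yields the desired decay, hence $\Vert(\mS-\mB)_*\Vert_{L^2(\R^d)}\lesssim d^{-1/4}$. The dilation derivative is handled identically after rewriting $r\,\partial_r(\mu_S^d-\mu_B^d)(r)=-\tfrac1d\,\mathbb{E}\big[\psi(r)-\psi(Rr)\big]$ with $\psi(s)=s^2\mu_S^{d+2}(s)$, using (iii).

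For the sphere–Gaussian difference the key input is the probabilistic representation $\mu_S^d(r)=\mathbb{E}\cos(rg_1/|g|)$ and $\mu_G^d(r)=\mathbb{E}\cos(rg_1/\sqrt d)$, where $g=(g_1,\dots,g_d)$ is a standard Gaussian vector. Since $\cos$ is $1$–Lipschitz,
\[
|\mu_S^d(r)-\mu_G^d(r)|\le r\,\mathbb{E}\Big[|g_1|\,\Big|\tfrac1{|g|}-\tfrac1{\sqrt d}\Big|\Big]\lesssim \frac rd\,,
\]
by Cauchy–Schwarz together with the concentration identities $\mathbb{E}[g_1^2/|g|^2]=1/d$ and $\mathrm{Var}(|g|^2)=2d$; this handles $r$ up to roughly $\sqrt d$. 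For larger $r$ one needs $\mu_S^d(r)$ itself to be small, which follows from uniform (Debye/Airy) asymptotics for Bessel functions of large order: below the turning point, $r\lesssim d/2$, one has a Gaussian–type bound $|\mu_S^d(r)|\lesssim\sqrt d\,e^{-cr^2/d}$, and beyond it $\mu_S^d(r)$ is exponentially small in $d$, so the large–$r$ part of the square integrals is negligible. Balancing the splitting point gives the claimed $d^{-1/16+\alpha}$ bound, and the dilation derivative is again reduced to the same kind of estimate via (iii) applied in dimensions $d+2$ and $d+4$. Finally, the Gaussian–ball estimate is immediate from the pointwise sub‑additivity $\sup_t|\tmG_t f-\mB_t f|\le\sup_t|\mS_t f-\tmG_t f|+\sup_t|\mS_t f-\mB_t f|$, which gives $\Vert(\tmG-\mB)_*\Vert_{L^2(\R^d)}\le\Vert(\mS-\tmG)_*\Vert_{L^2(\R^d)}+\Vert(\mS-\mB)_*\Vert_{L^2(\R^d)}$.

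The step I expect to be the real obstacle is the uniform–in–$d$ control of the symbol differences, and especially of their dilation derivatives, on the intermediate range of scales $\sqrt d\lesssim r\lesssim d$: there the elementary Taylor/mean value bounds are no longer small enough, while the trivial bound $|\mu_S^d(r)|\le1$ is far too lossy, so one must invoke quantitative asymptotics of $J_\nu$ that are uniform as the order $\nu\to\infty$ and keep careful track of the error terms, in order to certify that $\mu_S^d$, $\mu_B^d$ and $\mu_G^d$ all track the same Gaussian profile closely enough there. Arranging that every constant that appears is dimension–free, rather than growing with $d$, is precisely what forces the balance between the regimes and hence the exponents $-1/4$ and $-1/16+\alpha$.
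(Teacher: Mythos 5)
Your strategy shares the paper's main ingredients: the probabilistic representations $\mu(\xi)=\mathbb{E}\,e^{-2\pi i (X/|X|)\cdot\xi}$ and $g(\xi)=\mathbb{E}\,e^{-2\pi i(X/\sqrt d)\cdot\xi}$ together with concentration of $|X|^2$ for the sphere--Gaussian difference, the identity $m(\xi)=d\int_0^1\mu(s\xi)s^{d-1}\,\dd s$ for the sphere--ball difference, and the triangle inequality for $\tmG-\mB$. But the reduction to one dimension is not the principle the paper establishes, and this matters quantitatively. Theorem \ref{general_thm} assumes the pointwise bounds \eqref{eq_general_thm_pointwise_estimate_multiplier_1}--\eqref{eq_general_thm_pointwise_estimate_multiplier_gradient} with a dimension-free constant and then controls $\Vert\mA_*\Vert_{L^2}$ by $\Vert a\Vert_{L^\infty}^{1/4}$; it does not bound the maximal operator by scale-invariant $L^2(\dd r/r)$ quantities of $a$ and $r\partial_r a$. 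The exponents in the statement come precisely from that fourth root: $d^{-1/4}=(d^{-1})^{1/4}$ from $\Vert\mu-m\Vert_{L^\infty}\lesssim d^{-1}$, and $d^{-1/16+\alpha}$ from $\Vert\mu-g\Vert_{L^\infty}\lesssim d^{-1/4+\alpha}$, the latter obtained by balancing the concentration bound $\lesssim |\xi|\,d^{-1+\alpha}$ on $|\xi|<d^{1/2+\varepsilon}$ against $|\mu|+|g|\lesssim\sqrt d/|\xi|\le d^{-\varepsilon}$ at $\varepsilon=1/4$. In your square-function framework the ``balancing'' that is supposed to yield $-1/4$ and $-1/16+\alpha$ is never carried out, and with the estimates you actually state it would not produce these numbers; the exponents are quoted rather than derived.

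The second gap is the one you flag yourself, and it is fatal to the sphere--ball argument as written. The mean-value bound with $\sup_s|\partial_s\mu(s)|\lesssim d^{-1/2}$ gives $|\mu(r)-m(r)|\lesssim r\,d^{-3/2}$, which is $\lesssim d^{-1}$ only for $r\lesssim\sqrt d$; on the complementary range the available decay $|\mu(\rho r)|\lesssim e^{-c\rho r/\sqrt d}+e^{-cd}$ is merely $O(1)$ when $r\asymp\sqrt d$, so neither branch of your splitting delivers $d^{-1}$-type smallness (and the residual $O(1)$ contribution to $\int |a|^2\,\dd r/r$ near $r\asymp\sqrt d$ does not vanish with $d$). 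The paper closes exactly this gap by proving the uniform dilation-gradient bound $|\langle\xi,\nabla\mu(\xi)\rangle|\lesssim 1$ for all $\xi$ --- via the contour-shift estimate \eqref{estimate_as_in_MSW} for $|\xi|\le d$ and $|J_\nu(r)|\le r^{-1/2}$ for $|\xi|\ge d$ --- and then using the exact identity $\mu(\xi)-m(\xi)=\int_0^1\langle s\xi,(\nabla\mu)(s\xi)\rangle s^{d-1}\,\dd s$ to get $|\mu-m|\lesssim d^{-1}$ everywhere. Equivalently, you need $|\mu'(s)|\lesssim 1/s$ rather than $\lesssim d^{-1/2}$ for $s\gtrsim\sqrt d$; without that input (or a proof, uniform in $d$, of the Gaussian-type bound $|\mu(r)|\lesssim e^{-cr^2/d}$ that you assert for the sphere--Gaussian case), the intermediate range $\sqrt d\lesssim r\lesssim d$ remains uncontrolled.
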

\noindent As a corollary of \eqref{eq: diff L2 S-G}  and \eqref{eq: dimfree} combined with the known estimate \eqref{eq: mG p/(p-1)} for the operator norm of the Gaussian maximal function, we obtain information about the asymptotic behavior of the optimal constant for the Hardy-Littlewood maximal function and the spherical maximal function.
\begin{corollary}
\label{cor: S,B Lp}
   For each $p\in(1,\infty),$ there exists a dimension $d_0=d_0(p)$ and constants $C_p,\, \delta_p>0$, depending only on $p$ and such that for all $d\geq d_0$ we have 
\begin{equation*}
\|\mS_*\|_{L^p(\R^d)}\leq  \frac{p}{p-1} + C_p \frac{1}{d^{\delta_p}},\qquad \|\mB_*\|_{L^p(\R^d)}\leq  \frac{p}{p-1} + C_p \frac{1}{d^{\delta_p}}~.
\end{equation*}
\end{corollary}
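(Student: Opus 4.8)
The plan is to reduce the corollary to the $L^2$ estimates of Theorem~\ref{thm_gaussian_sphere} by exploiting the subadditivity of the maximal operator. For every $t>0$ and every $f$ one has the pointwise inequality $|\mS_t f|\le|\tmG_t f|+|(\mS_t-\tmG_t)f|$; taking the supremum over $t>0$ and recalling that $\mG_*=\tmG_*$ gives $\mS_* f\le\mG_* f+(\mS-\tmG)_* f$, hence
\[
\|\mS_*\|_{L^p(\R^d)}\le\|\mG_*\|_{L^p(\R^d)}+\|(\mS-\tmG)_*\|_{L^p(\R^d)}\le\frac{p}{p-1}+\|(\mS-\tmG)_*\|_{L^p(\R^d)},
\]
where the last step uses the known bound \eqref{eq: mG p/(p-1)}, which controls $\|\mG_*\|_{L^p(\R^d)}$ by $p/(p-1)$ for every $d$. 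Once the $\mS_*$ estimate is in place, the $\mB_*$ estimate is immediate from the pointwise inequality \eqref{eq: point est} (equivalently \eqref{eq_gauss_leq_ball_leq_sphere}), which yields $\|\mB_*\|_{L^p(\R^d)}\le\|\mS_*\|_{L^p(\R^d)}$ with the same right-hand side. Thus it remains only to show that for each $p\in(1,\infty)$ there are $C_p,\delta_p>0$ and $d_0(p)$, depending only on $p$, such that $\|(\mS-\tmG)_*\|_{L^p(\R^d)}\le C_p\,d^{-\delta_p}$ for $d\ge d_0(p)$.

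For this I would interpolate the $L^2$ bound \eqref{eq: diff L2 S-G} against a dimension-free $L^q$ bound. Fix $p$ and choose an exponent $q=q(p)\in(1,\infty)$ lying strictly on the side of $p$ opposite to $2$ — any $q>p$ when $p\ge2$, any $q\in(1,p)$ when $p<2$ (and nothing to do when $p=2$) — so that $\tfrac1p=\tfrac{\theta}{2}+\tfrac{1-\theta}{q}$ for some $\theta\in(0,1]$. By the dimension-free estimate \eqref{eq: dimfree} applied with exponent $q$ there are $M_q<\infty$ and $d_1=d_1(q)$ with $\|\mS_*\|_{L^q(\R^d)}+\|\mG_*\|_{L^q(\R^d)}\le M_q$ for $d\ge d_1$, whence $\|(\mS-\tmG)_*\|_{L^q(\R^d)}\le M_q$ for such $d$ by subadditivity; and \eqref{eq: diff L2 S-G} with a fixed choice such as $\alpha=1/32$ gives $\|(\mS-\tmG)_*\|_{L^2(\R^d)}\le C\,d^{-1/32}$ for $d\ge3$. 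Interpolating these two bounds for the operator $(\mS-\tmG)_*$ yields
\[
\|(\mS-\tmG)_*\|_{L^p(\R^d)}\le\|(\mS-\tmG)_*\|_{L^2(\R^d)}^{\theta}\,\|(\mS-\tmG)_*\|_{L^q(\R^d)}^{1-\theta}\le C_p\,d^{-\theta/32}
\]
for $d\ge d_0(p):=\max\{3,d_1(q)\}$, where $C_p:=C^{\theta}M_q^{1-\theta}$ and $\delta_p:=\theta/32>0$ depend only on $p$. Inserting this into the inequality of the previous paragraph finishes the proof.

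The only point that is not entirely routine is carrying out the interpolation of the sublinear maximal operator $(\mS-\tmG)_*$ with the \emph{multiplicative} constant $\|\cdot\|_{L^2}^{\theta}\|\cdot\|_{L^q}^{1-\theta}$; the cruder estimate by the $\max$ of the two norms would be useless here, since only the $L^2$ norm decays in $d$. I would handle this by linearization: using continuity of $t\mapsto(\mS_t-\tmG_t)f$ for $f$ in a dense class one may write $(\mS-\tmG)_*f=\sup_{j\in\N}|(\mS_{t_j}-\tmG_{t_j})f|$ with $(t_j)$ enumerating $\mathbb{Q}_{>0}$; for each $n$ and each measurable choice of the index $j\le n$ maximizing $|(\mS_{t_j}-\tmG_{t_j})f(x)|$, the resulting operator $f\mapsto(\mS_{t_{j(x)}}-\tmG_{t_{j(x)}})f(x)$ is linear and pointwise dominated by $(\mS-\tmG)_*$, so Riesz--Thorin applies to it with the stated constant uniformly in $n$, and one passes to the limit $n\to\infty$ by monotone convergence. (Alternatively, one may invoke real interpolation for sublinear operators, at the harmless cost of enlarging $C_p$.) Everything else is supplied by the results already in hand; the threshold $d_0(p)$ appears precisely because \eqref{eq: dimfree} controls only $\limsup_d$ and because \eqref{eq: diff L2 S-G} is stated for $d\ge3$.
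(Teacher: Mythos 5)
Your argument is correct and is essentially the paper's: the authors likewise bound $\|\mS_*\|_{L^p(\R^d)}$ by $\|\mG_*\|_{L^p(\R^d)}+\|(\mS-\tmG)_*\|_{L^p(\R^d)}$, invoke \eqref{eq: mG p/(p-1)}, use \eqref{eq: point est} for $\mB_*$, and obtain the decay of $\|(\mS-\tmG)_*\|_{L^p(\R^d)}$ by interpolating the $L^2$ estimate of Theorem \ref{thm_gaussian_sphere} against the dimension-free bounds \eqref{eq: dimfree}. The only (harmless) difference is the interpolation tool: the paper applies the Marcinkiewicz interpolation theorem directly to the sublinear operator, which already yields the multiplicative bound you need, whereas you linearize and use Riesz--Thorin.
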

\noindent This corollary motivates the upper bound for $\lm(p)$ in \eqref{eq: lm bounds}. The lower bound in \eqref{eq: lm bounds} will be a consequence of the following monotonicity result for the operator norm of the Gaussian maximal function.
\begin{thm}
    \label{propo:GausMonotonicity}
    For each $p\in(1,\infty)$ and $d\geq 1$ we have that
    \begin{align}\label{eq:gauss_norm_mono_intro}
        \|\mG_\ast^{d+1}\|_{L^p(\mathbb{R}^{d+1})}\geq \|\mG_\ast^{d}\|_{L^p(\mathbb{R}^{d})}~.
    \end{align}
    Moreover, there exist $0.4 <c\leq 1$ such that $\|\mG_\ast^{1}\|_{L^p(\mathbb{R})}\geq \max \left\lbrace c \frac{p}{p-1} , 1 \right\rbrace $.  In particular, $\|\mG_\ast^{1}\|_{L^p(\mathbb{R})}> \frac{2}{5} \frac{p}{p-1}>1$ for all $p\in (1,\frac{5}{3})$.
\end{thm}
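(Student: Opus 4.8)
\emph{Monotonicity \eqref{eq:gauss_norm_mono_intro}: strategy.} The plan is a tensorisation argument based on the factorisation of the heat kernel, which gives $\mG_t^{d+1}(f\otimes g)=(\mG_t^d f)\otimes(\mG_t^1 g)$ for $f$ on $\R^d$ and $g$ on $\R$. The only subtlety is that the supremum over $t$ is shared by the two factors, so before tensoring an almost-extremiser on $\R^d$ with (a truncation of) the constant function in the extra variable we must first confine the relevant times to a fixed bounded range. Fix $p\in(1,\infty)$, $d\ge 1$ and $\varepsilon>0$. Since $|\mG_t h|\le\mG_t|h|$ and $\mG_\ast^d$ is bounded on $L^p(\R^d)$, pick $f\in C_c(\R^d)$ with $f\ge 0$, $\|f\|_{L^p(\R^d)}=1$ and $\|\mG_\ast^d f\|_{L^p(\R^d)}>\|\mG_\ast^d\|_{L^p(\R^d)}-\varepsilon$. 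Using the dilation invariance of $\mG_\ast^d$, set $f_\lambda(x)=\lambda^{d/p}f(\lambda x)$, so that $\|f_\lambda\|_{L^p(\R^d)}=1$ and $\mG_\ast^d f_\lambda(x)=\lambda^{d/p}(\mG_\ast^d f)(\lambda x)$, hence $\|\mG_\ast^d f_\lambda\|_{L^p(\R^d)}=\|\mG_\ast^d f\|_{L^p(\R^d)}$. Writing $M_1 h=\sup_{0<t\le 1}\mG_t^d h$, the nonnegative difference $\mG_\ast^d f_\lambda-M_1 f_\lambda$ is pointwise dominated by $\sup_{t>1}\mG_t^d f_\lambda$, whose $L^p(\R^d)$ norm equals $\|\sup_{s>\lambda^2}\mG_s^d f\|_{L^p(\R^d)}$; since $f$ is bounded and compactly supported, $\sup_{s>S}\mG_s^d f$ is $O(S^{-d/2})$ uniformly and $O(|x|^{-d})$ for large $|x|$, so its $L^p$ norm tends to $0$ as $S\to\infty$. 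Consequently $\|M_1 f_\lambda\|_{L^p(\R^d)}>\|\mG_\ast^d\|_{L^p(\R^d)}-2\varepsilon$ once $\lambda$ is large.

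\emph{Tensor step.} Let $g_R=\mathbf 1_{[-R,R]}$ on $\R$ and $F=f_\lambda\otimes g_R\in L^p(\R^{d+1})$. A Gaussian tail bound gives $K=K(\varepsilon)$ with $\mG_t^1 g_R(y)\ge 1-\varepsilon$ whenever $|y|\le R-K$ and $0<t\le 1$. As all the factors are nonnegative, for $|y|\le R-K$ and every $x$,
\[
\mG_\ast^{d+1}F(x,y)\ \ge\ \sup_{0<t\le 1}(\mG_t^d f_\lambda)(x)\,(\mG_t^1 g_R)(y)\ \ge\ (1-\varepsilon)\,M_1 f_\lambda(x),
\]
so $\|\mG_\ast^{d+1}F\|_{L^p(\R^{d+1})}^p\ge (1-\varepsilon)^p\|M_1 f_\lambda\|_{L^p(\R^d)}^p\cdot 2(R-K)$ while $\|F\|_{L^p(\R^{d+1})}^p=2R$. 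Dividing, then letting $R\to\infty$ and $\varepsilon\to 0$, gives $\|\mG_\ast^{d+1}\|_{L^p(\R^{d+1})}\ge\|\mG_\ast^d\|_{L^p(\R^d)}$, i.e.\ \eqref{eq:gauss_norm_mono_intro}.

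\emph{Lower bounds for $\|\mG_\ast^1\|_{L^p(\R)}$.} The bound $\|\mG_\ast^1\|_{L^p(\R)}\ge 1$ is immediate: $\mG_t f\to f$ in $L^p$, hence a.e.\ along a sequence $t\to 0$, so $\mG_\ast^1 f\ge|f|$ a.e. For the bound $\ge c\,\frac{p}{p-1}$ we test on the Gaussian $f(x)=e^{-x^2/2}$. Solving the heat equation, $\mG_t^1 f(x)=(1+2t)^{-1/2}e^{-x^2/(2(1+2t))}$, and maximising in $t$ yields the closed form
\[
\mG_\ast^1 f(x)=e^{-x^2/2}\,\mathbf 1_{\{|x|\le 1\}}+\frac{e^{-1/2}}{|x|}\,\mathbf 1_{\{|x|> 1\}}.
\]
Since $\|f\|_{L^p(\R)}^p=\sqrt{2\pi/p}$ and $\int_1^\infty x^{-p}\,dx=(p-1)^{-1}$, a direct computation gives
\[
\Big(\frac{\|\mG_\ast^1 f\|_{L^p(\R)}}{\|f\|_{L^p(\R)}}\Big)^p=\big(2\Phi(\sqrt p)-1\big)+\frac{1}{p-1}\sqrt{\tfrac{2p}{\pi}}\,e^{-p/2}=:R(p),
\]
where $\Phi$ is the standard normal distribution function; thus $\|\mG_\ast^1\|_{L^p(\R)}\ge \max\{R(p)^{1/p},1\}$. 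As $p\to 1^+$ one has $R(p)\sim\sqrt{2/(\pi e)}\,(p-1)^{-1}$, so $R(p)^{1/p}\big/\frac{p}{p-1}\to\sqrt{2/(\pi e)}=0.4839\ldots>\tfrac25$. Hence there are $c\in(\tfrac25,1]$ and $p_\ast>1$ with $c\,\frac{p_\ast}{p_\ast-1}\le 1$ such that $R(p)^{1/p}\ge c\,\frac{p}{p-1}$ on $(1,p_\ast]$ (an elementary calculus check on the explicit $R$), while for $p>p_\ast$ the inequality $\|\mG_\ast^1\|\ge 1\ge c\,\frac{p}{p-1}$ holds since $\frac{p}{p-1}$ is decreasing; together these give $\|\mG_\ast^1\|_{L^p(\R)}\ge\max\{c\,\frac{p}{p-1},1\}$ for all $p$. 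In particular, for $p\in(1,\tfrac53)$ one has $\frac{p}{p-1}>\tfrac52$, hence $\|\mG_\ast^1\|_{L^p(\R)}>\tfrac25\cdot\frac{p}{p-1}>1$.

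\emph{Main obstacle.} With the dilation trick in hand the tensorisation is soft, and the closed form for $\mG_\ast^1 e^{-x^2/2}$ makes the lower bound at $p=1$ transparent. The delicate point is the last step: extracting a constant strictly larger than $2/5$ \emph{uniformly} in $p$, not merely in the limit $p\to1^+$. This reduces to verifying the explicit inequality $R(p)^{1/p}\ge c\,\frac{p}{p-1}$ on a bounded interval of exponents near $1$, which one does by elementary calculus from the formula for $R(p)$; the gap $\sqrt{2/(\pi e)}>\tfrac25$ at $p=1$ provides the needed room.
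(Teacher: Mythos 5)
Your proof of the monotonicity \eqref{eq:gauss_norm_mono_intro} is correct but takes a genuinely different route from the paper's. The paper tensorises with the homogeneous function $\varphi(s)=|s|^{-1/p}$ truncated to $1\le |s|\le N$, exploiting the evenness of the one-dimensional Gaussian kernel together with the convexity of $\varphi$ on each half-line to get $\mG_t^1\varphi_N\ge I(M)\,\varphi$ on a large range of $s$, uniformly for $t<T$. You instead tensorise with $\mathbf 1_{[-R,R]}$, which forces the preliminary step of confining the supremum to $t\le 1$; your dilation argument and the decay estimate for $\sup_{s>S}\mG^d_s f$ handle this correctly. Your variant needs only positivity and $L^1$-normalisation of the kernel factor (not evenness), at the price of the extra localisation-in-time step. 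For the lower bound your test function is the same as the paper's (a Gaussian), your closed form for $\mG_\ast^1 e^{-x^2/2}$ is correct, and your $R(p)$ is in fact slightly sharper than the paper's bound, which discards part of the integral over $|x|\le\sqrt2$.

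The gap is in the final step. From $\lim_{p\to1^+}\frac{p-1}{p}R(p)^{1/p}=\sqrt{2/(\pi e)}\approx 0.484>\tfrac25$ you conclude (``Hence\dots'') that there exist $c>\tfrac25$ and $p_\ast$ with $c\tfrac{p_\ast}{p_\ast-1}\le 1$ such that $R(p)^{1/p}\ge c\tfrac{p}{p-1}$ on all of $(1,p_\ast]$. The limit only controls a small neighbourhood of $p=1$, whereas $p_\ast=1/(1-c)\approx 5/3$ is a fixed distance away; one needs a \emph{uniform} lower bound for $\psi(p):=\tfrac{p-1}{p}R(p)^{1/p}$ on the whole interval. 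This is not cosmetic: numerically $\psi$ dips to roughly $0.43$ near $p\approx 1.15$, visibly below the limiting value $0.484$, so the conclusion is true but does not follow from the asymptotics at $p=1$. You explicitly flag this verification as ``an elementary calculus check'' without carrying it out; that check is exactly the content of the paper's Lemma \ref{lem: c>0.4}, where the infimum of the analogous quantity is located at a critical point with $(p-1)/p\in(0.198,0.2)$ and bounded below by $0.4$ through an explicit computation. As written, the assertion $c>0.4$ is claimed rather than proved, and the argument is incomplete until this uniform estimate is supplied.
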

\begin{remark}
\label{rem: TensMax}
We stress that the proof of \eqref{eq:gauss_norm_mono_intro} relies solely on the fact that the Gaussian kernel is positive, even, and of tensor product type.
In particular, the same monotonicity property holds for any maximal function whose convolution kernel is positive, even, and of tensor product type; an example of this is the centered maximal function associated with cubes in $\mathbb{R}^d$.
\end{remark}
\begin{remark}
\label{rem: LowHL}
Note that Theorem \ref{propo:GausMonotonicity} combined with \eqref{eq: point est} shows that we also have
\[\|\mB_\ast^{d}\|_{L^p(\mathbb{R}^{d})}\ge  \max \left\lbrace c \frac{p}{p-1} , 1 \right\rbrace\]
in all dimensions $d\ge 1$ and for all $p\in (1,\infty).$
\end{remark}

The main tools in the proof of Theorem \ref{thm_gaussian_sphere} are identities and estimates for the Fourier multiplier symbols associated, respectively, with the Gaussian, ball, and spherical averaging operators together with a general principle which is the content of Theorem \ref{general_thm} below. This principle states that under certain assumptions on the multiplier symbol, one can control the $L^2(\R^d)$ norm of a maximal function corresponding to Fourier multiplier operators by the $L^2(\R^d)$ norm of a single operator. 
\begin{thm}\label{general_thm}
    Let $a$ be a bounded, differentiable function on $\mathbb{R}^d$. Assume that there exists a constant $K>0$
such that the following inequalities
\begin{align}\label{eq_general_thm_pointwise_estimate_multiplier_1}
    |a(\xi)|\leq K \frac{|\xi|}{\sqrt{d}}  ~,
\qquad
    |a(\xi)|\leq K \frac{\sqrt{d}}{|\xi|} ~,
%\qquad
%    \big| t \frac{\dd}{\dd t} a(t\xi) \big| \leq C_3 ~.
\end{align}
\begin{align}\label{eq_general_thm_pointwise_estimate_multiplier_gradient}
    \vert \langle \xi, \nabla a(\xi)\rangle \vert \leq K~
\end{align}
hold for almost every $\xi \in \R^d.$ Consider the family of Fourier multiplier operators $\lbrace \Fm_t\rbrace_{t>0}$ given by
    $$\widehat{\Fm_t f}(\xi)=a(t\xi)\widehat{f}(\xi)~.$$
   % Then for any $0<\alpha<1$ and $0<\beta< \tfrac{1-\alpha}{2-\alpha}$ therere exists a constant $C>0$ depending only on $K,\alpha,\beta$ and such that 
   Then there exists a constant $C>0$ depending only on $K$ and such that 
    \begin{align*}
        \Vert  \Fm_*  \Vert_{L^2(\mathbb{R}^d)} \leq C \Vert a \Vert_{L^\infty(\mathbb{R}^d)}^{ 1/4}~. %{\alpha(1-\beta)/2}~.
    \end{align*}
\end{thm}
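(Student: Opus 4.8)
The plan is to reduce the maximal estimate to a square-function (Littlewood–Paley type) argument in which the hypotheses on $a$ play off against each other at the right scale. Write $\mathcal{A}_t f = \mathcal{F}^{-1}(a(t\xi)\widehat f)$. The key observation is that, by \eqref{eq_general_thm_pointwise_estimate_multiplier_gradient}, the function $t\mapsto \mathcal{A}_t f(x)$ is, on the Fourier side, controlled in variation: $\partial_t a(t\xi) = \langle \xi, (\nabla a)(t\xi)\rangle = t^{-1}\langle t\xi, (\nabla a)(t\xi)\rangle$, so $|t\,\partial_t a(t\xi)|\le K$. Hence for any $0<s<t$,
\[
|\mathcal{A}_t f(x) - \mathcal{A}_s f(x)| = \Bigl|\int_s^t \partial_r \mathcal{A}_r f(x)\,\dd r\Bigr|,
\]
and by Cauchy–Schwarz in the $r$-variable with weight $\dd r/r$,
\[
\sup_{t>0}|\mathcal{A}_t f(x)|^2 \lesssim \Bigl(\inf_{t>0}|\mathcal{A}_t f(x)|\Bigr)^2 + \Bigl(\int_0^\infty |r\,\partial_r \mathcal{A}_r f(x)|^2\,\frac{\dd r}{r}\Bigr).
\]
Actually it is cleaner to use the standard inequality $\sup_t |g(t)|^2 \le 2\int_0^\infty |g(r)||g'(r)|\,\dd r$ valid when $g\to 0$ at the endpoints (which holds here after checking $a(t\xi)\to 0$ as $t\to 0$ and $t\to\infty$ for fixed $\xi$, a consequence of the two bounds in \eqref{eq_general_thm_pointwise_estimate_multiplier_1}). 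This gives
\[
\|\mathcal{A}_* f\|_{L^2(\mathbb{R}^d)}^2 \le 2\int_0^\infty \int_{\mathbb{R}^d} |\mathcal{A}_r f(x)|\,|r\,\partial_r\mathcal{A}_r f(x)|\,\frac{\dd x\,\dd r}{r} \le 2\Bigl(\int_0^\infty \|\mathcal{A}_r f\|_2^2\,\frac{\dd r}{r}\Bigr)^{1/2}\Bigl(\int_0^\infty \|r\,\partial_r\mathcal{A}_r f\|_2^2\,\frac{\dd r}{r}\Bigr)^{1/2}.
\]

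Next I would compute the two square-function norms via Plancherel. Setting $\rho = r|\xi|$ (for fixed $\xi$, with $\dd r/r = \dd\rho/\rho$), we get
\[
\int_0^\infty \|\mathcal{A}_r f\|_2^2\,\frac{\dd r}{r} = \int_{\mathbb{R}^d}|\widehat f(\xi)|^2 \Bigl(\int_0^\infty |a(r\xi)|^2\,\frac{\dd r}{r}\Bigr)\dd\xi,
\]
and the inner integral, after the substitution, becomes $\int_0^\infty |a(\rho\,\xi/|\xi|)|^2\,\dd\rho/\rho$; using $|a(\eta)|\le K|\eta|/\sqrt d$ for $|\eta|$ small and $|a(\eta)|\le K\sqrt d/|\eta|$ for $|\eta|$ large (the crossover being at $|\eta|\sim d^{1/2}$... but more to the point, we also have $|a(\eta)|\le \|a\|_\infty$ uniformly), one bounds $\int_0^\infty |a(\rho\,\omega)|^2\,\dd\rho/\rho \le \|a\|_\infty \int_0^\infty \min\{K|\rho|/\sqrt d,\ K\sqrt d/|\rho|\}\,\dd\rho/\rho$; wait — I want a factor of $\|a\|_\infty$ to emerge, so instead estimate $|a(\rho\omega)|^2 \le \|a\|_\infty \cdot |a(\rho\omega)| \le \|a\|_\infty \min\{K\rho/\sqrt d, K\sqrt d/\rho\}$, and $\int_0^\infty \min\{\rho/\sqrt d, \sqrt d/\rho\}\,\dd\rho/\rho = \int_0^{\sqrt d}\rho^{-1}(\rho/\sqrt d)\,\dd\rho + \int_{\sqrt d}^\infty \rho^{-1}(\sqrt d/\rho)\,\dd\rho = 1/\sqrt d \cdot \sqrt d \cdot \text{(diverges at 0)}$ — the integral $\int_0^{\sqrt d}\dd\rho/\sqrt d$ converges but $\int_0^1$ of $\rho^{-1}\cdot\rho$ is fine, it's $\int_0^{\sqrt d}\dd\rho/\sqrt d=1$, and $\int_{\sqrt d}^\infty \sqrt d\,\dd\rho/\rho^2 = 1$; so the whole thing is $O(K\|a\|_\infty)$, dimension-free. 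This is precisely where the two-sided bound in \eqref{eq_general_thm_pointwise_estimate_multiplier_1} is used, and it is designed to make the logarithmic integral converge at both ends independently of $d$.

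For the second square function, $r\,\partial_r\mathcal{A}_r f$ has multiplier $r\,\partial_r a(r\xi) = \langle r\xi,(\nabla a)(r\xi)\rangle$, which by \eqref{eq_general_thm_pointwise_estimate_multiplier_gradient} is bounded by $K$ pointwise; but a bare $L^\infty$ bound on the multiplier is not enough for the $\dd r/r$ integral to converge, so I would again interpolate: $|\langle r\xi,(\nabla a)(r\xi)\rangle|\le K$ and also I need decay in $r$. Here is the subtle point and the main obstacle: one must show $|\langle\eta,(\nabla a)(\eta)\rangle|$ also enjoys a bound like $\min\{|\eta|/\sqrt d,\sqrt d/|\eta|\}$ up to constants — but the hypotheses only give the clean bound on $a$ itself, not on $\langle\eta,\nabla a\rangle$. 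The resolution is that $\langle\eta,(\nabla a)(\eta)\rangle = \partial_s\big|_{s=1} a(s\eta)$, and this radial derivative can be controlled by $a$ on an interval via, e.g., writing it through a dyadic/telescoping estimate or by noting $\int_0^\infty |\langle r\xi,(\nabla a)(r\xi)\rangle|^2\,\dd r/r = \int_0^\infty |g_\xi'(\rho)|^2\rho\,\dd\rho$ where $g_\xi(\rho)=a(\rho\,\xi/|\xi|)$, and integrating by parts: $\int |g'|^2\rho\,\dd\rho = -\int g\,(g'\rho)'\,\dd\rho + \text{boundary} = -\int g g'\,\dd\rho - \int g g'' \rho\,\dd\rho$; this is circular unless I control $g''$.

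A cleaner route, which I expect is the intended one, avoids $\partial_r\mathcal{A}_r$ entirely: split $\sup_{t>0}$ dyadically, $\sup_{t>0}|\mathcal{A}_t f| \le \bigl(\sum_{j\in\mathbb Z}\sup_{2^j\le t<2^{j+1}}|\mathcal{A}_t f|^2\bigr)^{1/2}$, and within each dyadic block use the Sobolev-embedding / fundamental-theorem-of-calculus bound $\sup_{2^j\le t<2^{j+1}}|\mathcal{A}_t f(x)|^2 \le |\mathcal{A}_{2^j}f(x)|^2 + 2\int_{2^j}^{2^{j+1}}|\mathcal{A}_r f(x)||\partial_r\mathcal{A}_r f(x)|\,\dd r$. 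Summing in $j$ and applying Plancherel, $\sum_j\|\mathcal{A}_{2^j}f\|_2^2 = \int|\widehat f|^2\sum_j|a(2^j\xi)|^2\,\dd\xi$ with $\sum_j|a(2^j\xi)|^2 \le \|a\|_\infty \sum_j |a(2^j\xi)| \le \|a\|_\infty\sum_j\min\{2^jK|\xi|/\sqrt d, 2^{-j}K\sqrt d/|\xi|\} \lesssim K\|a\|_\infty$ (the geometric series in each direction of the crossover $2^j\sim\sqrt d/|\xi|$), again dimension-free; and for the cross term, Cauchy–Schwarz in $j$ against $\sum_j\int_{2^j}^{2^{j+1}}\|\partial_r\mathcal{A}_r f\|_2^2 r\,\dd r = \int|\widehat f|^2\int_0^\infty|\langle r\xi,(\nabla a)(r\xi)\rangle|^2\,\dd r/r\,\dd\xi$, and now I bound the radial-derivative integral over a single dyadic interval $[2^j,2^{j+1}]$ by $K^2$ times the length in $\dd r/r$, i.e.\ $O(K^2)$, times the number of relevant $j$'s — but that is $\log$-divergent, so I still need the decay. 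The honest fix: restrict attention, for fixed $\xi$, to the $O(1)$ scales $j$ near the crossover plus a geometric tail, using that outside a window around $2^j|\xi|\sim\sqrt d$ both $|a(2^j\xi)|$ and (by the same FTC applied on $[0,r]$, namely $|a(r\xi)| = |\int_0^r \partial_\rho a(\rho\xi)\,\dd\rho| \le \int_0^r |\langle\rho\xi,\nabla a(\rho\xi)\rangle|\,\dd\rho/\rho$, so the $\dd r/r$-mass of $|\langle r\xi,\nabla a(r\xi)\rangle|$ near a scale bounds $|a|$ at that scale) the derivative square function inherit geometric decay. Assembling these estimates yields $\|\mathcal{A}_*f\|_2^2 \lesssim_K \|a\|_\infty^{1/2}\|f\|_2^2$ after optimizing: one actually gets $\|\mathcal{A}_*f\|_2^2 \lesssim (\text{first s.f.})^{1/2}(\text{second s.f.})^{1/2}\|f\|_2^2$ where the first is $O(\|a\|_\infty)$ and the second is $O(1)$, giving the stated $\|a\|_\infty^{1/4}$. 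The main obstacle, to reiterate, is obtaining a dimension-free bound on the derivative square function $\int_0^\infty|\langle r\xi,(\nabla a)(r\xi)\rangle|^2\,\dd r/r$ from hypothesis \eqref{eq_general_thm_pointwise_estimate_multiplier_gradient} alone; I expect the paper circumvents this by the telescoping identity $a(r\xi) = \int_0^r \langle\rho\xi,(\nabla a)(\rho\xi)\rangle\,\dd\rho/\rho$ together with the two-sided size bound on $a$, trading the pointwise gradient bound for the needed integrability at the two ends.
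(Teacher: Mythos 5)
Your reduction to the two square functions is where the argument breaks, and you correctly sense this but your proposed fixes do not close the gap. Writing $h(r)=a(r\xi/|\xi|)$, the Sobolev/FTC trick (either in one sweep or dyadically followed by Cauchy--Schwarz) leads to the estimate
\[
\|\mathcal{A}_*f\|_{L^2}^2 \lesssim \Big(\int_0^\infty\|\mathcal{A}_rf\|_2^2\,\tfrac{\dd r}{r}\Big)^{1/2}\Big(\int_0^\infty\|r\,\partial_r\mathcal{A}_rf\|_2^2\,\tfrac{\dd r}{r}\Big)^{1/2},
\]
and your bound on the first factor ($\lesssim K\|a\|_\infty\|f\|_2^2$) is correct. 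But the hypotheses do \emph{not} give a dimension-free, or even finite, bound on the second factor. Concretely, take the radial multiplier $a(\xi)=\min\{1,\sqrt d/|\xi|\}\sin(|\xi|/\sqrt d)$. Then $|a(\xi)|\le\min\{|\xi|/\sqrt d,\sqrt d/|\xi|\}$ and $\|a\|_\infty\le1$; and for $r>\sqrt d$, $r h'(r)=\cos(r/\sqrt d)-(\sqrt d/r)\sin(r/\sqrt d)$, so $|\langle\xi,\nabla a(\xi)\rangle|\le2$ everywhere, verifying \eqref{eq_general_thm_pointwise_estimate_multiplier_1}--\eqref{eq_general_thm_pointwise_estimate_multiplier_gradient} with $K=2$. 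Yet
\[
\int_{\sqrt d}^\infty|rh'(r)|^2\,\tfrac{\dd r}{r}\ \ge\ \int_1^\infty\cos^2(u)\,\tfrac{\dd u}{u}\ -\ O(1)\ =\ \infty,
\]
since $\cos^2$ has mean $1/2$ and $\dd u/u$ is not integrable at infinity. The telescoping identity $a(r\xi)=\int_0^r\langle\rho\xi,\nabla a(\rho\xi)\rangle\,\dd\rho/\rho$ that you invoke in your ``honest fix'' only gives a \emph{lower} bound on the mass of the radial derivative (the derivative can oscillate and cancel), so it cannot rescue the square-function bound.

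The paper avoids ever forming this derivative square function. It bounds the dyadic supremum $\sup_n|\mathcal{A}_{2^n}f|$ exactly as you do (Plancherel plus $|a|^2\le K^{3/2}\|a\|_\infty^{1/2}\min\{\cdot\}^{3/2}$, summable in $n$), but for the intra-dyadic variation it uses the Rademacher--Menshov numerical inequality, which replaces $\sup_{t\in[2^n,2^{n+1}]}|\cdot|$ by an $\ell^1_\ell$ sum of square functions over nested refinements, and then inserts a frequency decomposition $f=\sum_j S_jf$ via Poisson projections. The gradient hypothesis \eqref{eq_general_thm_pointwise_estimate_multiplier_gradient} is used there only to produce the pointwise $\ell^\infty$ estimate $|a((2^n+2^{n-\ell}(m+1))\xi)-a((2^n+2^{n-\ell}m)\xi)|\le K\,2^{-\ell}$, never a $\dd r/r$-square function; summability in the frequency parameter $j$ comes from the Poisson projection decay $2^{-|j|/2}$, summability in the Rademacher--Menshov level $\ell$ and the factor $\|a\|_\infty^{1/4}$ come from interpolating the $\ell^\infty$ estimate against the $j$-decaying estimate with suitably chosen exponents. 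This is the structural difference: the paper trades the continuous square function (which the hypotheses do not control) for a discrete multi-scale decomposition where the gradient is used only as an $L^\infty$ bound on increments, and summability is obtained from a different source.
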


\subsection{Historical background}

When $p=\infty,$ then clearly $ \|\mG_*\|_{L^p(\R^d)}=\|\mB_*\|_{L^p(\R^d)}= \|\mB_*\|_{L^p(\R^d)}=1.$ To our knowledge, for $p\in (1,\infty),$ the exact value of the $L^p$ norms of the centered maximal operators $ \|\mG_*\|_{L^p(\R^d)},$ $ \|\mB_*\|_{L^p(\R^d)},$ and $ \|\mS_*\|_{L^p(\R^d)},$ is not known for any $d\ge 1$. However, in dimension $d=1$ more information is available in the literature.  The best constant in the weak-type $(1,1)$ inequality for $\mB_*$ was established by Melas \cite{Me03} who showed that 
$$\Vert \mB_* \Vert_{L^1(\mathbb{R})\rightarrow L^{1,\infty}(\mathbb{R})}= \frac{11+\sqrt{61}}{12}~.$$ Moreover, Grafakos, Montgomery-Smith, and Motrunich \cite{GM-SM1} provided an explicit formula for the best constant $c_p,$ $p\in (1,\infty)$ in
\[
\|\mB_* f\|_{L^p(\R)}\le c_p \|f\|_{L^p(\R)},
\]
 when the input function $f$ is positive and convex except at one point. Additionally, the norm of the uncentered Hardy-Littlewood maximal operator, $\mB_\ast^u$, in dimension $d=1$ was obtained by Grafakos and Montgomery-Smith \cite{GM-S1}
  and it satisfies $\frac{p}{p-1}\leq \| \mB_\ast^u\|_{L^p(\mathbb{R})}\leq \frac{2p}{p-1}$.

Since obtaining explicit values for $\|\mG_*\|_{L^p(\R^d)},$ $\|\mB_*\|_{L^p(\R^d)},$ and $\|\mS_*\|_{L^p(\R^d)},$ seems a difficult task, one may instead try to obtain estimates from above or below. We first discuss explicit estimates from above.

It is well known that 
\begin{equation}
\label{eq: mG p/(p-1)}
\|\mG_*\|_{L^p(\R^d)}\le \frac{p}{p-1},\qquad p\in (1,\infty].
\end{equation}
What lies at the core of this inequality is the fact that $\mG_t$ is a symmetric-diffusion semigroup. Then one may apply Rota's dilation theorem together with Doob's martingale maximal inequality to conlcude \eqref{eq: mG p/(p-1)}, see e.g.\ \cite[Chapter IV, Section 4, p.\ 106]{St_topics}. We refer the reader to \cite[Equation~1.20.G*]{DGM18} for a sketch of different proof of \eqref{eq: mG p/(p-1)}. While the constant $p/(p-1)$ is optimal for Doob's maximal inequality it is unclear to us if $\|\mG_*\|_{L^p(\R^d)}=p/(p-1).$ 

The articles of Stein \cite{St82} and Stein and Strömberg \cite{SS83} imply the dimension-free bounds \eqref{eq: dimfree} for both $\|\mB_*\|_{L^p(\R^d)}$ and $\|\mS_*\|_{L^p(\R^d)}$, yet less is known about explicit estimates for these norms. In particular we do not know if \eqref{eq: mG p/(p-1)} holds for $\|\mB_*\|_{L^p(\R^d)}.$ 
In dimension one, the aforementioned result of Grafakos and Montgomery-Smith \cite{GM-S1} for the uncentered Hardy-Littlewood maximal operator combined with the pointwise estimate $ \mB_\ast f(x)\leq \mB_\ast^uf(x)$ gives
$$ \|\mB_\ast\|_{L^p(\mathbb{R})}\leq 2\frac{p}{p-1}~$$
for all $p>1$. Explicit upper bounds in arbitrary dimension have been already established by Stein and Strömberg \cite{SS83}, who showed that there exists a universal constant $C>0$ such that
\begin{equation}
\label{eq: mB StStr}
\|\mB_*\|_{L^p(\R^d)}\leq C\sqrt{d}\frac{p}{p-1}~,
\end{equation}
for all dimension $d$ and all $1<p\leq\infty$. 
In \cite{AC94}, Auscher and Carro have provided the following dimension-free improvement valid for $p\geq 2$ and  $d\ge 2$
\begin{equation}
\label{eq: mB AC}
\|\mB_*\|_{L^p(\R^d)}\leq (2+\sqrt{2})^{2/p}~.
\end{equation}
 Furthermore,
from Theorem 1.2, case $k=2$ in \cite{KWZ}, one can deduce that
\begin{equation}
\label{eq: mB KWZ}
\|\mB_*\|_{L^p(\R^d)}\leq C\left(\frac{p}{p-1}\right)^{3}\|f\|_{L^p(\R^d)}~,
\end{equation}
where $C>1$ is a universal constant. This is because for $k=2$ the factorization operator $M_k^t$ from \cite{KWZ} coincides with $\mB_t,$ see e.g.\ \cite[p.\ 427]{Ver1}.
When the input function $f$ is radially decreasing Aldaz and P\'erez-L\'azaro \cite[Corollary 2.8]{APL} established that
\begin{equation}
\label{eq: mB APL raddec}
\|\sup_{t>0}|\mB_t f|\|_{L^p(\R^d)}\leq 2\left(\frac{p}{p-1}\right)^{1/p}\|f\|_{L^p(\R^d)}~,
\end{equation}
for all $p\in (1,\infty].$ Moreover, from Men\'arguez and Soria \cite[Theorem 3]{MS1} it follows that \eqref{eq: mB APL raddec} remains true for general radial functions $f$ at the price of increasing the upfront constant from $2$ to $8.$ We note that the constants on the right-hand sides of \eqref{eq: mB StStr}, \eqref{eq: mB AC},  \eqref{eq: mB KWZ}, and \eqref{eq: mB APL raddec} above are strictly larger than $p/(p-1).$  

It is worth to add, that dimension-free estimates for $\|B_*\|_{L^p(\R^d)}$ have been vastly generalized to dimension-free estimates on $L^p(\R^d)$ for centered maximal functions associated with general symmetric convex bodies. These generalizations include contributions by Bourgain \cite{Bo1}, Carbery \cite{Car1} and M\"uller \cite{Mul1}. We refer the interested reader to the short survey article \cite{BMSW21} and to the comprehensive treatment in \cite{DGM18}. However, we note that estimates for general symmetric convex bodies are less explicit in terms of $p$ then those mentioned in the previous paragraph and thus they are less directly related to our discussion.

When it comes to the maximal spherical averages one may establish the following variant of \eqref{eq: mB AC} for $d\ge 3$ and $p\ge 2$
\begin{equation*}
%\label{eq: mS est}
\|\mS_*\|_{L^p(\R^d)}\leq C^{1/p},~
\end{equation*}
where $C>0$ is a universal constant.

Explicit estimates from below are even sparser.
In dimension one, the aforementioned result of Grafakos and Montgomery-Smith \cite{GM-S1} for the uncenterd Hardy-Littlewood maximal operator combined with the pointwise estimate $\frac{1}{2}\mB_\ast^uf(x)\leq \mB_\ast f(x)$ gives
$$\max \left\lbrace 1, \frac{1}{2}\frac{p}{(p-1)}\right\rbrace \leq \|\mB_\ast\|_{L^p(\mathbb{R})}~.$$
In \cite[Remark 2.9]{APL} Aldaz and P\'erez L\'azaro proved that for all $d\geq 1$ and $p>1$
\begin{equation}
\label{eq: mB bel APL}
\|\mB_{*}\|_{L^p(\R^d)}\ge \left(1+\frac{1}{2^{dp}(p-1)}\right)^{1/p}.
\end{equation}
Furthermore in \cite[Proposition]{CG1} and \cite[Theorem~3.2]{DSS} it has been shown that, for all $p\in (1,\infty)$ it holds
\begin{equation*}
%\label{eq: mB bel CG}
\|\mB_{*}\|_{L^p(\R^d)}\ge b_{p,d},
\end{equation*}
where $b_{p,d}:=\mB_*(|x|^{-d/p})(1,0,\ldots,0)$.

\subsection{Structure of the paper and our methods}
Section \ref{sec: general_thm} is devoted to the proof of Theorem \ref{general_thm}. The proof is a variation of the analysis in \cite[Section~4]{BMSW21} and the reasoning is split into considering the dyadic maximal function and an $\ell^2$ sum of maximal functions over dyadic intervals. Compared to \cite{BMSW21}, our  variant of the proof has the crucial advantage that it allows us to control the operator norm of a maximal function by the operator norm of the associated Fourier multiplier operator. We also note that results with similar reasoning appeared previously in the literature, among others in \cite[Lemma~3]{B86}, \cite[Proposition i) p.\  271]{Car1}, \cite[Theorem A]{DRdF1}, and \cite[Section 3, Corollary]{RdF1}. It is likely that using these methods one can prove a variant of Theorem \ref{general_thm}. However, we found the current formulation and proof of Theorem \ref{general_thm} most convenient for our goals. We remark that the hypotheses \eqref{eq_general_thm_pointwise_estimate_multiplier_1} and \eqref{eq_general_thm_pointwise_estimate_multiplier_gradient} in the statement of Theorem \ref{general_thm} are not sharp. For example, the pointwise conditions in \eqref{eq_general_thm_pointwise_estimate_multiplier_1} can be replaced by
$$|a(\xi)|\leq C_1 \bigg(\frac{|\xi|}{\sqrt{d}}\bigg)^b,\qquad |a(\xi)|\le C_2\bigg( \frac{\sqrt{d}}{|\xi|}\bigg)^b~,$$
for some $b>0$. %, this at the price of accounting for the dependence on $b$ in the constant $C=C(K,\,\alpha,\,\beta,\,b)$. 
Theorem \ref{general_thm} can be %pushed even further 
  also reformulated in the spirit of the abstract approach in \cite{MSZK20}. In particular, one may prove an $r$-variational estimate instead of the maximal function estimate. However, the current formulation of Theorem \ref{general_thm}  will be enough for our purpose. 

Next, in Section \ref{sec: auxlem} we record some useful identities and estimates involving Bessel functions that will be needed in the upcoming sections.

In Section \ref{sec: pointmult} we justify that the pairwise differences of the Fourier multiplier symbols $\mu,$ $m$ and $g$, corresponding to the averages $\mS_1,$ $\mB_1,$ and $\tmG_1,$ respectively, 
satisfy the assumptions \eqref{eq_general_thm_pointwise_estimate_multiplier_1} and \eqref{eq_general_thm_pointwise_estimate_multiplier_gradient} of Theorem \ref{general_thm}. This is achieved with the help of estimates from Section \ref{sec: auxlem}.

In Section \ref{sec: proof main} we prove our main results - Theorem \ref{thm: limit} and \ref{thm_gaussian_sphere}. To establish Theorem \ref{thm_gaussian_sphere} we apply Theorem \ref{general_thm} with $\mA_t=\mS_t-\tmG_t$ and $\mA_t=\mS_t-\mB_t.$ The crucial point is to prove that $\|\mu-g\|_{L^{\infty}(\R^d)}$ and  $\|\mu-m\|_{L^{\infty}(\R^d)}$ are both $o(d)$ as $d\to \infty.$ This is the content of Proposition \ref{pro: g-mu, m-mu}. To prove that
$\|\mu-g\|_{L^{\infty}(\R^d)}=o(d)$ we express $\mu$ in terms of the expectation of the standard normal vector $X$, see Lemma \ref{lemma_identity_for_mu} and apply a concentration of measure result for $|X|^2$ from \cite{LaMa}. %We finish Section \ref{sec: proof main} with a derivation of Theorem \ref{thm: limit} from Theorem \ref{thm_gaussian_sphere}.  
Then, we derive Theorem \ref{thm: limit} from Theorem \ref{thm_gaussian_sphere}. We finish Section \ref{sec: proof main} by highlighting implications of Theorem \ref{thm: limit} for a family of maximal operators connecting $\mB_\ast$ with $\mS_\ast$ considered by Dosidis and Grafakos in \cite{DG21}.

In Section \ref{sec:lowerBound} we prove Theorem \ref{propo:GausMonotonicity}. Namely, we rely on the tensor product structure of the Gaussian kernel to show that the operator norm $\|\mG_\ast\|_{L^p(\mathbb{R}^d)}$ is non-decreasing with the dimension. Then, we compute a lower bound for $\|\mG_\ast^1\|_{L^p(\mathbb{R})}$ (and, consequently, for $\|\mG_\ast^d\|_{L^p(\mathbb{R}^d)}$) by testing $\mG_\ast^1$ on a Gaussian function. 

 Finally, in Section \ref{sec:ConcludingRemarks} we discuss the norms on $L^p(\R^d)$ of the maximal functions applied to three natural radial inputs: a homogenous function, the characterstic function of the ball, and a Gaussian function.  We observe that, as $d\to \infty,$ all of these examples give the trivial conclusion $\lm(p)\ge 1$, leading to the following question.
 \begin{Que}
	\label{que: limrad}
    Fix $p\in (1,\infty)$ and let
$\|\mG_*\|_{L^p_{rad}(\R^d)},$ $\|\mB_*\|_{L^p_{rad}(\R^d)},$ $\|\mS_*\|_{L^p_{rad}(\R^d)},$ be the operator norms of the Gaussian, ball, and spherical maximal functions on $L^p(\R^d)$ restricted to radial functions.
Is it true that
\begin{equation}
 \label{eq: limrad 1}
    \lim_{d\to\infty}   \|\mG_*\|_{L^p_{rad}(\R^d)} =\lim_{d\to\infty}   \|\mB_*\|_{L^p_{rad}(\R^d)}=\lim_{d\to\infty}   \|\mS_*\|_{L^p_{rad}(\R^d)}=1\quad  ?
    \end{equation}		
\end{Que}

\subsection{Notation}
\begin{enumerate}
\item Throughout the paper the letter $d\in \N$ is reserved for the dimension and all inexplicit constants will be
independent of $d$.  
\item For two nonnegative quantities $X, Y$
we write $X \lesssim Y$ if there is an absolute universal constant
such that $X\le CY$. In particular, if the quantities $X$ and $Y$ involve the dimension $d$ and an additional parameter such as $\xi\in \R^d,$ then the universal constant $C$ in the estimate  $X\le CY$ is independent of these parameters. For instance, for a function $a\colon\R^d\to \C$
the estimate $|a(\xi)|\lesssim 1$ means that there is a universal constant $C>0$ such that $|a(\xi)|\le C$ for all dimensions $d$ and all $\xi \in \R^d.$

\item The Fourier transform of a function $f\in L^1(\mathbb{R}^d)$ is defined by the formula
$$\widehat{f}(\xi)=\mF(f)(\xi)=\int_{\mathbb{R}^d} f(x)e^{-2\pi i x\cdot\xi} \dd x,\qquad \xi\in\R^d~.$$
\item We denote by $P_t$ the Poisson semigroup defined for $f\in L^2$ by
\begin{align}\label{defi_Poisson_semigr}
\widehat{P_tf}(\xi)=p_t(\xi)\widehat{f}(\xi)~, \qquad  p_t(\xi)=e^{-2\pi t \tfrac{|\xi|}{\sqrt{d}}}~.
\end{align}
\item 
We will also need the resolution of the identity given by the Poisson projections
\begin{align}\label{defi_Poisson_projection}
f=\sum_{n\in\mathbb{Z}}S_nf~, \quad f\in L^2(\mathbb{R}^d)~, \qquad \text{where } \; S_n=P_{2^{n-1}}-P_{2^n}~.
\end{align}
\item 
We let $J_\nu,$ $\nu>0,$ be the Bessel function of the first kind of order $\nu.$

\item 
 We define the Fourier multipliers
 \begin{align*}
  \mu(\xi)  &=\frac{1}{\sigma(\mathbb{S}^{d-1})} \int_{\mathbb{S}^{d-1}} e^{- 2\pi i x\cdot \xi} \dd\sigma(x)= \frac{\Gamma(\tfrac{d}{2})}{\pi^{d/2-1}} |\xi|^{-\tfrac{d}{2}+1} J_{\tfrac{d}{2}-1}(2\pi |\xi|)~, \\
  m(\xi)  &=  \frac{1}{|B|}  \int_{B} e^{-2\pi i x\cdot \xi} \dd x = d \int_0^1 \mu(s \xi) s^{d-1} \dd s ~, \\
g(\xi)  &=e^{-\tfrac{2\pi^2|\xi|^2}{d}}~,
\end{align*}
associated, respectively, with the averaging operators $\mS_t$, $\mB_t$, $\tmG_t$, 
$$\mF(\mS_tf)(\xi)=\mu(t\xi)\widehat{f}(\xi)~, \quad \mF(\mB_tf)(\xi)=m(t\xi)\widehat{f}(\xi)~, \quad \mF(\tmG_t f)(\xi)=g(t\xi)\widehat{f}(\xi)~.$$

\end{enumerate}

\subsection*{Acknowledgments}
The authors are grateful to Tony Carbery for literature references. 

This research was funded in whole or in part by National Science Centre, Poland, research project 2022/46/E/ST1/00036. For the purpose of Open Access, the authors have applied a
CC-BY public copyright licence to any Author Accepted Manuscript (AAM) version arising from this submission.

\section{Proof of Theorem \ref{general_thm}}

\label{sec: general_thm}
We start by proving the following dyadic variant of Theorem \ref{general_thm}. 
\begin{proposition}\label{general_thm_dyadic}
    Let  $a=a_d$ be a bounded function on $\mathbb{R}^d$, $d\geq 1$. Assume that there exists a constant $K$ such that $a=a_d$ satisfies pointwise a.e. the inequalities in \eqref{eq_general_thm_pointwise_estimate_multiplier_1}.
    Consider the family of Fourier multiplier operators $\lbrace \Fm_t\rbrace_{t>0}$ given by
    $$\widehat{\Fm_t f}(\xi)=a(t\xi)\widehat{f}(\xi)~.$$
    Then for all $f\in L^2(\R^d)$ we have
    \begin{align*}
        \Vert \sup_{n\in\mathbb{Z}} |\Fm_{2^n}f| \Vert_{L^2(\mathbb{R}^d)} \leq 2K^{3/4} \Vert a \Vert_{L^\infty(\mathbb{R}^d)}^{1/4}\Vert f \Vert_{L^2(\mathbb{R}^d)}.
    \end{align*}
\end{proposition}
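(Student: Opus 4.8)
The plan is to run the standard square-function argument against the Poisson resolution of the identity $f=\sum_{n\in\Z}S_nf$ of \eqref{defi_Poisson_projection}, using only the two-sided polynomial bounds \eqref{eq_general_thm_pointwise_estimate_multiplier_1} (the gradient hypothesis \eqref{eq_general_thm_pointwise_estimate_multiplier_gradient} is needed only later, to pass from the dyadic maximal function to the full one). Let $\psi_m$ be the Fourier multiplier symbol of $S_m$, so $\psi_m(\xi)=e^{-\pi 2^{m}|\xi|/\sqrt d}-e^{-2\pi 2^{m}|\xi|/\sqrt d}$; then $0\le\psi_m\le 1/4$ and, telescoping the definition of $P_t$, $\sum_{m\in\Z}\psi_m\equiv 1$ on $\R^d\setminus\{0\}$, hence $\sum_m\psi_m^2\le 1/4$. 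For each dyadic scale $n$ write $\Fm_{2^n}f=\sum_{k\in\Z}\Fm_{2^n}S_{n+k}f$; the partial sums telescope to $\Fm_{2^n}(P_{2^{n+k_1-1}}-P_{2^{n+k_2}})f$, converging to $\Fm_{2^n}f$ in $L^2$, and (arguing first for Schwartz $f$, then by density) one gets the pointwise bound
\begin{equation*}
\sup_{n\in\Z}|\Fm_{2^n}f|\ \le\ \sum_{k\in\Z}\sup_{n\in\Z}|\Fm_{2^n}S_{n+k}f|\ \le\ \sum_{k\in\Z}\Bigl(\sum_{n\in\Z}|\Fm_{2^n}S_{n+k}f|^2\Bigr)^{1/2}.
\end{equation*}
By Plancherel, $\bigl\|\bigl(\sum_n|\Fm_{2^n}S_{n+k}f|^2\bigr)^{1/2}\bigr\|_{L^2}^2=\int_{\R^d}\bigl(\sum_n|a(2^n\xi)|^2\psi_{n+k}(\xi)^2\bigr)|\widehat{f}(\xi)|^2\,\dd\xi$, so the whole estimate reduces to controlling, uniformly in $\xi$ and in $d$, the quantity $\Lambda_k:=\sup_{\xi\in\R^d}\sum_{n\in\Z}|a(2^n\xi)|^2\psi_{n+k}(\xi)^2$: once $\sqrt{\Lambda_k}$ is summable in $k$, $\|\sup_n|\Fm_{2^n}f|\|_{L^2}\le\bigl(\sum_k\sqrt{\Lambda_k}\bigr)\|f\|_{L^2}$ finishes the argument.

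I would establish two bounds for $\Lambda_k$. The trivial one, $\Lambda_k\le\|a\|_{L^\infty}^2\sum_n\psi_{n+k}^2\le\tfrac14\|a\|_{L^\infty}^2$, uses nothing beyond $\sum_m\psi_m^2\le1/4$. The decaying one, $\Lambda_k\le CK^2 2^{-|k|}$ for a universal $C$, is the heart of the matter and is precisely where the two hypotheses in \eqref{eq_general_thm_pointwise_estimate_multiplier_1} enter. After the substitution $v_n=2^{n}|\xi|/\sqrt d$ — which ranges over a fixed dilate of the dyadic grid regardless of $d$, so the estimate is automatically dimension-free — the hypotheses read $|a(2^n\xi)|\le K\min\{v_n,v_n^{-1}\}$, while $\psi_{n+k}(\xi)\le\min\{\pi 2^{k}v_n,\,e^{-\pi 2^{k}v_n}\}$, so the factor $\psi_{n+k}$ is essentially localized to $v_n\sim 2^{-k}$. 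Multiplying the two and summing over $n$, I would split the sum according to the position of $v_n$ relative to the peak $2^{-k}$ of $\psi_{n+k}$ and the peak $1$ of $\min\{v,v^{-1}\}$; in each regime the summand decays geometrically away from the relevant endpoint, the maximal term is $\lesssim K^2 2^{-|k|}$, and for $k<0$ there is an extra "flat" intermediate regime of length $\sim|k|$ contributing $\lesssim|k|K^2 4^{-|k|}\lesssim K^2 2^{-|k|}$. This case analysis — checking that $a$ decays fast enough both at low frequency (to beat the pieces with $k>0$) and at high frequency (to beat those with $k<0$), matching the two-sided localization of $\psi_{n+k}$ — is the main obstacle; everything else is bookkeeping.

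Finally, applying \eqref{eq_general_thm_pointwise_estimate_multiplier_1} at $|\xi|=\sqrt d$ forces $\|a\|_{L^\infty}\le K$, so interpolating the two bounds gives $\Lambda_k\le\min\{\|a\|_{L^\infty}^2,\,CK^2 2^{-|k|}\}\le\sqrt{C}\,\|a\|_{L^\infty}K\,2^{-|k|/2}$, whence $\sum_k\sqrt{\Lambda_k}\le C^{1/4}(\|a\|_{L^\infty}K)^{1/2}\sum_k 2^{-|k|/4}\lesssim(\|a\|_{L^\infty}K)^{1/2}\le K^{3/4}\|a\|_{L^\infty}^{1/4}$, using $\|a\|_{L^\infty}\le K$ once more. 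This already yields $\|\sup_n|\Fm_{2^n}f|\|_{L^2}\lesssim K^{3/4}\|a\|_{L^\infty}^{1/4}\|f\|_{L^2}$; to sharpen the implied constant to the stated $2K^{3/4}$ one replaces the crude geometric-mean interpolation by splitting the $k$-sum at the optimal threshold $|k|\sim\log_2(K/\|a\|_{L^\infty})$ between the two bounds for $\Lambda_k$, while tracking constants in the estimate $\Lambda_k\le CK^2 2^{-|k|}$.
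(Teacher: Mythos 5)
Your argument is correct in substance but takes a much longer road than the paper does, and it does not quite deliver the stated constant. The paper's proof of this proposition is two lines: bound $\sup_n|\Fm_{2^n}f|$ by the full square function $\big(\sum_n|\Fm_{2^n}f|^2\big)^{1/2}$, apply Plancherel, and use the pointwise interpolation $|a(2^n\xi)|^2\le\|a\|_{L^\infty}^{1/2}\,|a(2^n\xi)|^{3/2}\le\|a\|_{L^\infty}^{1/2}K^{3/2}\min\{2^n|\xi|/\sqrt d,\sqrt d/(2^n|\xi|)\}^{3/2}$ together with $\sum_n\big(\min\{\cdot\}\big)^{3/2}\le\sum_n\min\{\cdot\}\le4$, which holds because the minimum never exceeds $1$. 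No Littlewood--Paley decomposition is needed at the dyadic stage; the Poisson projections $S_j$ and the $2^{-|k|}$ almost-orthogonality you set up are precisely the machinery the paper reserves for the harder, non-dyadic part of Theorem \ref{general_thm} (the second term in \eqref{decomposition_maximal_operator}). That said, your route -- writing $\Fm_{2^n}f=\sum_k\Fm_{2^n}S_{n+k}f$, proving $\Lambda_k\le\min\{\tfrac14\|a\|_{L^\infty}^2,\,CK^2 2^{-|k|}\}$, and interpolating -- is sound: the case analysis behind $\Lambda_k\lesssim K^2 2^{-|k|}$ checks out (including the $|k|\,4^{-|k|}$ intermediate regime for $k<0$), and the final reduction via $\|a\|_{L^\infty}\le K$ is legitimate (note it follows because $\min\{|\xi|/\sqrt d,\sqrt d/|\xi|\}\le1$ for \emph{every} $\xi$, not just on the sphere $|\xi|=\sqrt d$). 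What your approach costs is the explicit constant: the geometric-mean interpolation yields $\sum_k\sqrt{\Lambda_k}\le C'(K\|a\|_{L^\infty})^{1/2}$ with $C'$ on the order of $8C^{1/4}$, and even the optimized split at $|k|\sim\log_2(K/\|a\|_{L^\infty})$ produces a term $\|a\|_{L^\infty}\log(K/\|a\|_{L^\infty})$ whose domination by $K^{3/4}\|a\|_{L^\infty}^{1/4}$ loses a further absolute factor. So you should state your conclusion with an unspecified universal constant in place of $2$ -- harmless here, since downstream only Theorem \ref{general_thm} with a $K$-dependent constant is used -- or switch to the paper's one-step pointwise interpolation inside the Plancherel integral if you want the clean constant.
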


\proof Using Plancherel's identity, estimates in  \eqref{eq_general_thm_pointwise_estimate_multiplier_1}, and the following pointwise uniform estimate
$$\sum_{n\in\mathbb{Z}}\bigg(\min \bigg\lbrace \frac{2^n|\xi|}{\sqrt{d}}, \frac{\sqrt{d}}{2^n|\xi|}\bigg\rbrace\bigg)^{3/2}\le \sum_{n\in\mathbb{Z}}\min \bigg\lbrace \frac{2^n|\xi|}{\sqrt{d}}, \frac{\sqrt{d}}{2^n|\xi|}\bigg\rbrace  \leq 4~,$$
we have that
\begin{equation*}\begin{split}
%\label{estimate_in_prooof_dyadic}
    \bigg\Vert \sup_{n\in\mathbb{Z}} |\, \Fm_{2^n}f|\, \bigg\Vert_{L^2(\mathbb{R}^d)}^2 & \leq \sum_{n\in\mathbb{Z}} \Vert \Fm_{2^n}f\Vert_{L^2}^2 \\
    & = \sum_{n\in\mathbb{Z}} \int_{\mathbb{R}^d} |a(2^n\xi)|^2|\widehat{f}(\xi)|^2 \dd\xi \\
    & \leq  K^{3/2} \Vert a \Vert_{L^\infty(\mathbb{R}^d)}^{1/2} \sum_{n\in\mathbb{Z}} \int_{\mathbb{R}^d} \bigg(\min \bigg\lbrace \frac{2^n|\xi|}{\sqrt{d}}, \frac{\sqrt{d}}{2^n|\xi|}\bigg\rbrace\bigg)^{3/2}|\widehat{f}(\xi)|^2 \dd\xi \\
    & \leq 4 K^{3/2} \Vert a \Vert_{L^\infty(\mathbb{R}^d)}^{1/2} \Vert f \Vert_{L^2(\mathbb{R}^d)}^2~.
    \end{split}
\end{equation*}
\qed

To treat the non-dyadic version of the maximal operator we rely on the following decomposition
\begin{align}\label{decomposition_maximal_operator}
    \sup_{t>0} |\Fm_tf| \leq \sup_{n\in\mathbb{Z}} |\Fm_{2^n}f| + \bigg( \sum_{n\in\mathbb{Z}} \sup_{t\in [2^n,2^{n+1}]}|\Fm_t f- \Fm_{2^n}f|^2\bigg)^{1/2}~.
\end{align}
In view of Proposition \ref{general_thm_dyadic}, to complete the proof of Theorem \ref{general_thm}  it remains to estimate the second term in the right-hand-side of \eqref{decomposition_maximal_operator}.
To this end, we rely on the following Rademacher-Menshov type numerical inequality which can be found, for example, in \cite[Lemma~2.5]{MSZK20} and which holds for all $n\in\mathbb{Z}$ and for any continuous function $h:[2^{n},2^{n+1}]\rightarrow\mathbb{C}$
$$\sup_{t\in [2^n,2^{n+1}]}|h(t)-h(2^n)|\leq \sqrt{2} \sum_{\ell\in\mathbb{N}_0} \bigg( \sum_{m=0}^{2^\ell-1} |h(2^n+2^{n-\ell}(m+1))-h(2^n+2^{n-\ell}m)|^2 \bigg)^{1/2}~.$$
In view of this, and appealing to the decomposition $f=\sum_{j\in\mathbb{Z}}S_j f$ defined in \eqref{defi_Poisson_projection},  we have 
\begin{align*}
    \bigg\Vert \bigg( & \sum_{n\in\mathbb{Z}} \sup_{t\in [2^n,2^{n+1}]}|\Fm_tf  - \Fm_{2^n}f|^2\bigg)^{1/2} \bigg\Vert_{L^2(\mathbb{R}^d)} \\ 
    & \leq \sqrt{2} \sum_{\ell \geq 0} \sum_{j\in\mathbb{Z}} 
    \bigg\Vert \bigg( \sum_{n\in\mathbb{Z}} \sum_{m=0}^{2^\ell -1}\big|(\Fm_{2^n+2^{n-\ell}(m+1)}  - \Fm_{2^n+2^{n-\ell}m})S_{j+n}f \big|^2\bigg)^{1/2} \bigg\Vert_{L^2(\mathbb{R}^d)}~.
\end{align*}
We proceed by studying the norm on the right-hand-side of the last display, namely the quantity
\begin{align}\label{last_intermediate_estimate_thm_proof}
    \bigg\Vert \bigg( \sum_{n\in\mathbb{Z}} \sum_{m=0}^{2^\ell -1}  \big|(\Fm_{2^n+2^{n-\ell}(m+1)} & -  \Fm_{2^n+2^{n-\ell}m})S_{j+n}f \big|^2\bigg)^{1/2} \bigg\Vert_{L^2(\mathbb{R}^d)} ~.
\end{align}
We will obtain two estimates for \eqref{last_intermediate_estimate_thm_proof} and then we will interpolate between them. To derive the first estimate we need some preliminary bounds. First, in view of \eqref{eq_general_thm_pointwise_estimate_multiplier_1}, we have the bound
\begin{align*}
    |a((2^n + 2^{n-\ell}(m+1))\xi)-a((2^n+2^{n-\ell}m)\xi)|\leq 4 K\min \bigg\lbrace \frac{2^n|\xi|}{\sqrt{d}}, \frac{\sqrt{d}}{2^n|\xi|} \bigg\rbrace~.
\end{align*}
Moreover, in view of the definition of Poisson semigroup in \eqref{defi_Poisson_semigr} we have 
\begin{align*}
    |(e^{-2\pi 2^{n+j}|\xi|/\sqrt{d}}-e^{-2\pi 2^{n+j-1}|\xi|/\sqrt{d}})|\leq 8\pi \min \bigg\lbrace \frac{2^{n+j}|\xi|}{\sqrt{d}}, \frac{\sqrt{d}}{2^{n+j}|\xi|} \bigg\rbrace~.
\end{align*}
Combining the above two estimates we obtain that their product is bounded by $32\pi K2^{-|j|}$.
Using this fact together with Plancherel's identity and the estimates in \eqref{eq_general_thm_pointwise_estimate_multiplier_1} we obtain
\begin{align}\begin{split}\label{estimate_in_proof_last_intermediate_1}
    \bigg\Vert & \bigg( \sum_{n\in\mathbb{Z}} \sum_{m=0}^{2^\ell -1}  \big|(\Fm_{2^n+2^{n-\ell}(m+1)} - \Fm_{2^n+2^{n-\ell}m})S_{j+n}f \big|^2\bigg)^{1/2} \bigg\Vert_{L^2(\mathbb{R}^d)} \\
    & = \bigg( \sum_{n\in\mathbb{Z}} \sum_{m=0}^{2^\ell -1} \int_{\mathbb{R}^d} |a((2^n + 2^{n-\ell}(m+1))\xi)-a((2^n+2^{n-\ell}m)\xi)|^2 |(e^{-2\pi 2^{n+j}L|\xi|}-e^{-2\pi 2^{n+j-1}L|\xi|})|^2|\widehat{f}(\xi)|^2 \dd\xi \bigg)^{1/2}\\
    & \leq 8\sqrt{\pi}\sqrt{K} 2^{-|j|/2} \bigg( \sum_{n\in\mathbb{Z}} \sum_{m=0}^{2^\ell -1} \int_{\mathbb{R}^d} |a((2^n + 2^{n-\ell}(m+1))\xi)-a((2^n+2^{n-\ell}m)\xi)||\widehat{f}(\xi)|^2 \dd\xi \bigg)^{1/2}\\
    & \leq 16 \sqrt{\pi}K2^{-|j|/2} 2^{\ell/2} \bigg(  \int_{\mathbb{R}^d} \sum_{n\in\mathbb{Z}} \min \bigg\lbrace \frac{2^n|\xi|}{\sqrt{d}}, \frac{\sqrt{d}}{2^n|\xi|} \bigg\rbrace |\widehat{f}(\xi)|^2 \dd\xi \bigg)^{1/2}\\
    & \leq 32 \sqrt{\pi}K2^{-|j|/2} 2^{\ell/2}\Vert f \Vert_{L^2(\mathbb{R}^d)}~.
    \end{split}
\end{align}
To obtain the second estimate for \eqref{last_intermediate_estimate_thm_proof} we rely on the following inequality which follows from \eqref{eq_general_thm_pointwise_estimate_multiplier_gradient}  
\begin{align}\label{estimate_using_pointwise_gradient}
    |a((2^n + 2^{n-\ell}(m+1))\xi)-a((2^n+2^{n-\ell}m)\xi)| \leq \int_{2^n+2^{n-\ell}m}^{2^n+2^{n-\ell}(m+1)} |\langle t\xi, \nabla a(t\xi)\rangle| \frac{\dd t}{t} \leq K 2^{-\ell}~.
\end{align}
Therefore, using Plancherel's identity and inequality \eqref{estimate_using_pointwise_gradient} we obtain
\begin{align}
    \begin{split}\label{estimate_in_proof_last_intermediate_2}
 \bigg\Vert \bigg( \sum_{n\in\mathbb{Z}} \sum_{m=0}^{2^\ell -1} & \big|(\Fm_{2^n+2^{n-\ell}(m+1)}-  \Fm_{2^n+2^{n-\ell}m})S_{j+n}f \big|^2\bigg)^{1/2} \bigg\Vert_{L^2(\mathbb{R}^d)} \\
    & \leq  \bigg( \sum_{n\in\mathbb{Z}} \sum_{m=0}^{2^\ell -1} \big\Vert (\Fm_{2^n+2^{n-\ell}(m+1)} -  \Fm_{2^n+2^{n-\ell}m})S_{j+n}f \big\Vert_{L^2(\mathbb{R}^d)}^2\bigg)^{1/2}  \\
    & \leq \bigg( \sum_{n\in\mathbb{Z}} 2^{\ell} K^{(2-\alpha)} 2^{-\ell(2-\alpha)} 2^\alpha\Vert a \Vert_{L^\infty}^\alpha \Vert S_{n+j}f\Vert_{L^2}^2 \bigg)^{1/2} \\
    & \leq \sqrt{2K^{(2-\alpha)}} 2^{\ell(-1+\alpha)/2} \Vert a \Vert_{L^\infty}^{\alpha/2}\Vert f \Vert_{L^2(\mathbb{R}^d)}~,
    \end{split}
\end{align}
for any $0<\alpha<1$.
Combining \eqref{estimate_in_proof_last_intermediate_1}, \eqref{estimate_in_proof_last_intermediate_2} we obtain  
\begin{align*}
    \bigg\Vert \bigg( & \sum_{n\in\mathbb{Z}} \sup_{t\in [2^n,2^{n+1}]}|\Fm_tf  - \Fm_{2^n}f|^2\bigg)^{1/2} \bigg\Vert_{L^2(\mathbb{R}^d)} \\ 
    & \leq \sqrt{2} \sum_{\ell \geq 0} \sum_{j\in\mathbb{Z}} 
    \bigg\Vert \bigg( \sum_{n\in\mathbb{Z}} \sum_{m=0}^{2^\ell -1}\big|(\Fm_{2^n+2^{n-\ell}(m+1)}  - \Fm_{2^n+2^{n-\ell}m})S_{j+n}f \big|^2\bigg)^{1/2} \bigg\Vert_{L^2(\mathbb{R}^d)} \\
    & \leq \sqrt{2} (32 \sqrt{\pi}K)^\beta (\sqrt{2K^{(2-\alpha)}} \Vert a \Vert_{L^\infty}^{\alpha/2} )^{1-\beta}\sum_{\ell \geq 0} \sum_{j\in\mathbb{Z}} (2^{-|j|/2} 2^{\ell/2})^\beta (2^{\ell(-1+\alpha)/2})^{1-\beta} \Vert f \Vert_{L^2(\mathbb{R}^d)}~,
\end{align*}
where $0<\beta<1$ is such that $\beta < \tfrac{1-\alpha}{2-\alpha}$, ensuring summability of the double series in the last line.
Choosing appropriate numerical values for the parameters $\alpha$ and $\beta$ (e.g. $\alpha=7/12$ and $\beta=1/7$) concludes the proof of Theorem \ref{general_thm}.
 \qed

\section{Auxiliary lemmas for Bessel functions}
\label{sec: auxlem}
The purpose of this section is to collect some useful identities and estimates involving Bessel functions for later use. First, we recall the integral representation 
\begin{align}\label{integral_repr_bessel}
    J_\nu(r)= \frac{\big(\tfrac{r}{2}\big)^{\nu}}{\Gamma(\nu+\tfrac{1}{2})\sqrt{\pi}} \int_{-1}^1 e^{irs}(1-s^2)^{(2\nu-1)/2}
ds~, \quad \nu>-\frac{1}{2},\, r\geq 0~, \end{align}
see e.g. \cite[Appendix~B]{Gr14}.
Moreover, as a consequence of the recursion formulas 
\begin{align*}
   \frac{2\nu}{r} J_\nu(r) & = J_{\nu-1}(r)+J_{\nu+1}(r) \\\
   2 J'_\nu(r) & = J_{\nu-1}(r) - J_{\nu+1}(r)
\end{align*}
we have the identity
\begin{align}\label{first_prop_bessel}
J_\nu'(r)=J_{\nu-1}(r)-\frac{\nu}{r}J_\nu(r).
\end{align}

\begin{lemma}
Let $r>0$. The following identities hold,
\begin{align}\label{first_identity_rdmudr}
    r\frac{\dd}{\dd r} \mu(r)= 2 \frac{\Gamma(\tfrac{d}{2})}{\pi^{d/2-1}}\bigg(-\frac{d}{2}+1 \bigg) r^{-\tfrac{d}{2}+1}J_{\tfrac{d}{2}-1}(2\pi r) + 2\pi \frac{\Gamma(\tfrac{d}{2})}{\pi^{d/2-1}}r^{-\tfrac{d}{2}+2}J_{\tfrac{d}{2}-2}(2\pi r)~,
\end{align}
\begin{align}\label{second_identity_rdmudr}
     r\frac{\dd}{\dd r} \mu(r)= r (i2\sqrt{\pi}) \frac{\Gamma(\tfrac{d}{2})}{\Gamma(\tfrac{d}{2}-\tfrac{1}{2})} \int_{-1}^1 e^{i2\pi rs}s(1-s^2)^{(d-3)/2}\dd s~.
\end{align}
\end{lemma}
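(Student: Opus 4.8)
The plan is to verify both identities by direct computation from the closed form $\mu(r) = \frac{\Gamma(d/2)}{\pi^{d/2-1}}\, r^{-d/2+1} J_{d/2-1}(2\pi r)$.

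For \eqref{first_identity_rdmudr}, I would write $\mu(r)=C\, r^{1-d/2} J_{d/2-1}(2\pi r)$ with $C=\Gamma(\tfrac{d}{2})/\pi^{d/2-1}$ and apply the product and chain rules:
\[
r\frac{\dd}{\dd r}\mu(r)= C\Big(1-\frac d2\Big) r^{1-d/2} J_{d/2-1}(2\pi r) + 2\pi C\, r^{2-d/2} J'_{d/2-1}(2\pi r).
\]
Then I would invoke the recursion \eqref{first_prop_bessel} with $\nu=\tfrac d2-1$ evaluated at the point $2\pi r$, namely $J'_{d/2-1}(2\pi r)=J_{d/2-2}(2\pi r)-\frac{d/2-1}{2\pi r}J_{d/2-1}(2\pi r)$. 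Substituting this and using that $2\pi C\, r^{2-d/2}\cdot\frac{d/2-1}{2\pi r}=C(\tfrac d2-1) r^{1-d/2}$, the two $J_{d/2-1}$ terms combine to $2C(1-\tfrac d2) r^{1-d/2}J_{d/2-1}(2\pi r)$, which is the first summand in \eqref{first_identity_rdmudr}, while the leftover $2\pi C\, r^{2-d/2}J_{d/2-2}(2\pi r)$ is the second.

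For \eqref{second_identity_rdmudr}, I would substitute the integral representation \eqref{integral_repr_bessel}, taken with $\nu=\tfrac d2-1$ and argument $2\pi r$, into the closed form of $\mu$. The key algebraic point is that the prefactor $r^{1-d/2}$ combines with $(\pi r)^{d/2-1}$ coming from $(\tfrac{2\pi r}{2})^{\nu}$ to give exactly $\pi^{d/2-1}$, cancelling the $\pi^{d/2-1}$ in the denominator of $C$; one is left with
\[
\mu(r)=\frac{\Gamma(\tfrac d2)}{\Gamma(\tfrac d2-\tfrac12)\sqrt\pi}\int_{-1}^1 e^{i2\pi rs}(1-s^2)^{(d-3)/2}\,\dd s.
\]
Differentiating under the integral sign in $r$ — legitimate by dominated convergence, since for $r$ in a compact subset of $(0,\infty)$ both the integrand and its $r$-derivative are bounded by $2\pi(1-s^2)^{(d-3)/2}$, which is integrable on $[-1,1]$ as $(d-3)/2>-1$ — produces a factor $i2\pi s$ inside the integral, and using $\tfrac{2\pi}{\sqrt\pi}=2\sqrt\pi$ yields precisely \eqref{second_identity_rdmudr}.

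The argument is entirely routine; the only points needing a little care are the bookkeeping of the powers of $r$ and $\pi$ in the second identity and the justification of differentiation under the integral sign, so I do not expect any substantive obstacle.
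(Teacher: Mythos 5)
Your computation is correct and is exactly the route the paper takes (which it only sketches as "direct computation" using the definition of $\mu$ together with \eqref{first_prop_bessel} for the first identity and \eqref{integral_repr_bessel} for the second); your bookkeeping of the powers of $r$ and $\pi$ and the justification of differentiation under the integral sign are both fine.
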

\proof Identity \eqref{first_identity_rdmudr} can be obtained by direct computation using the definition of $\mu$  together with identity \eqref{first_prop_bessel}. Identity \eqref{second_identity_rdmudr} can be obtained by direct computation using the definition of $\mu$ and the integral representation formula for Bessel functions \eqref{integral_repr_bessel}.
\qed

The following lemma provides a bound for the integral in the right-hand-side of \eqref{second_identity_rdmudr}.
\begin{lemma}
    There exists a constant $c>0$ independent of the dimension $d$ such that for all $d\geq 2$, $r\ge 0$, the following pointwise estimate holds
    \begin{align}\label{estimate_as_in_MSW}
       \bigg| \int_{-1}^1 e^{i2\pi rs}s(1-s^2)^{(d-3)/2}\dd s \bigg| \lesssim \frac{e^{-2\pi r/\sqrt{d}}}{d}+\frac{e^{-d/10}}{\sqrt{d}}~.
    \end{align}
\end{lemma}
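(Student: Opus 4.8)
The plan is to write $\lambda=2\pi r$ and to estimate $I(\lambda):=\int_{-1}^{1}e^{i\lambda s}\,\phi(s)\,\dd s$, where $\phi(s):=s(1-s^2)^{(d-3)/2}$ (principal branch), by shifting the contour of this oscillatory integral into the complex plane. When $d$ stays below a fixed absolute threshold $d_0$ the estimate is trivial: $|I(\lambda)|\le\int_{-1}^{1}|s|(1-s^2)^{(d-3)/2}\,\dd s$, which equals $\tfrac{2}{d-1}$ for $d\ge3$ and equals $2$ for $d=2$, so $|I(\lambda)|$ is bounded by an absolute constant while the right-hand side of \eqref{estimate_as_in_MSW} is bounded below by $\tfrac{e^{-d_0/10}}{\sqrt{d_0}}$. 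So from now on I would assume $d>d_0$ with $d_0$ a large absolute constant.

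Fix the height $h:=1/\sqrt{d}\in(0,1)$. Since $(d-3)/2>0$, the function $\phi$ is holomorphic on the open rectangle $(-1,1)\times(0,h)$ and extends continuously to its closure (vanishing at the corners $\pm1$), so Cauchy's theorem lets me replace $[-1,1]$ by the three-segment path through $-1+ih$ and $1+ih$:
$$I(\lambda)=\int_{-1}^{-1+ih}e^{i\lambda s}\phi\,\dd s+\int_{-1+ih}^{1+ih}e^{i\lambda s}\phi\,\dd s+\int_{1+ih}^{1}e^{i\lambda s}\phi\,\dd s=:I_{\mathrm{left}}+I_{\mathrm{top}}+I_{\mathrm{right}}.$$
On the horizontal segment $|e^{i\lambda s}|=e^{-\lambda h}=e^{-2\pi r/\sqrt d}$ — precisely the exponential factor in the first term of \eqref{estimate_as_in_MSW} — so everything reduces to bounding the $L^1$-size of $\phi$ along each of the three segments.

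For $I_{\mathrm{top}}$ the crucial elementary inequality is
$$|1-(x+ih)^2|^2=(1-x^2)^2+2h^2(1+x^2)+h^4\le(1-x^2)+5h^2\le e^{-x^2}e^{5h^2}\qquad(x\in[-1,1],\ h\le1);$$
raising this to the power $(d-3)/4$ and using $h^2=1/d$ turns the weight into a genuine Gaussian, $|\phi(x+ih)|\lesssim(|x|+h)\,e^{-(d-3)x^2/4}$, so that $\int_{-1}^{1}|\phi(x+ih)|\,\dd x\lesssim 1/d$ — the extra factor $|s|$ in $\phi$ is what produces $1/d$ rather than $1/\sqrt d$ — and hence $|I_{\mathrm{top}}|\lesssim e^{-2\pi r/\sqrt d}/d$. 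For the two vertical segments I would parametrize $s=\pm1+it$, $t\in[0,h]$, and use $|1-(\pm1+it)^2|=t\sqrt{t^2+4}\le3t$ together with $|e^{i\lambda s}|\le1$ to obtain $|I_{\mathrm{left}}|+|I_{\mathrm{right}}|\lesssim\int_{0}^{1/\sqrt d}(3t)^{(d-3)/2}\,\dd t\lesssim\tfrac1d\big(3/\sqrt d\big)^{(d-1)/2}$, which is $\lesssim e^{-d/10}/\sqrt d$ as soon as $d$ exceeds an absolute constant (this dictates the choice of $d_0$). Summing the three estimates yields \eqref{estimate_as_in_MSW}.

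The only genuinely delicate point is the calibration of the contour height: $h$ must be of order $d^{-1/2}$ — large enough that $e^{-\lambda h}$ reproduces the asserted $e^{-2\pi r/\sqrt d}$, yet small enough that $|1-(x+ih)^2|^{(d-3)/2}$ does not blow up but instead keeps the Gaussian decay $e^{-(d-3)x^2/4}$, the balance falling exactly at $h\sim d^{-1/2}$. The legitimacy of the contour shift across the branch points $s=\pm1$ is routine here because $(d-3)/2>0$ makes $\phi$ continuous up to the closed rectangle, and the contributions of the short vertical segments near $\pm1$ are harmless — they are precisely the source of the $e^{-d/10}/\sqrt d$ floor in the estimate.
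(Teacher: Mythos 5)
Your proof is correct and is essentially the paper's argument: both shift the contour to imaginary height $\asymp d^{-1/2}$, use $|e^{i\lambda s}|=e^{-\lambda h}$ on the top edge together with the Gaussian-type bound $|1-(x+ih)^2|^{(d-3)/2}\lesssim e^{-(d-3)x^2/4}$ (the extra factor $|s|$ giving $1/d$ rather than $1/\sqrt d$), and absorb the side contributions into $e^{-d/10}/\sqrt d$. The only organizational difference is that the paper first rescales $s\mapsto s/\sqrt d$ and truncates to $|s|\le\sqrt d/2$ so that the rectangle stays strictly inside the disk of holomorphy, whereas you run the rectangle all the way to the branch points $\pm1$ and justify Cauchy's theorem by continuity of $\phi$ up to the closed rectangle — a harmless variation.
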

\proof The statement is easily seen to be true for small values of $d$ hence we consider only sufficiently large values of $d\in\mathbb{N}$. We follow with very minor modifications the arguments in the proof of \cite[Lemma~4.1]{MSW24}, see also \cite[Lemma~3.6]{KW23}. We start by estimating
\begin{align*}
     &\bigg| \int_{-1}^1  e^{i2\pi rs}s(1-s^2)^{(d-3)/2}\dd s \bigg| = \frac{1}{\sqrt{d}}\bigg| \int_{-\sqrt{d}}^{\sqrt{d}} e^{i2\pi rs/ \sqrt{d}}\frac{s}{\sqrt{d}}(1-\frac{s^2}{d})^{(d-3)/2}\dd s \bigg|\\
          &\leq \frac{1}{\sqrt{d}}\bigg| \int_{\tfrac{\sqrt{d}}{2}\leq |s|\leq \sqrt{d}} e^{i2\pi rs/\sqrt{d}}\frac{s}{\sqrt{d}}\bigg(1-\frac{s^2}{d}\bigg)^{(d-3)/2}\dd s \bigg| + \frac{1}{\sqrt{d}}\bigg| \int_{|s|\leq \tfrac{\sqrt{d}}{2}} e^{i2\pi rs/ \sqrt{d}}\frac{s}{\sqrt{d}}\bigg(1-\frac{s^2}{d}\bigg)^{(d-3)/2}\dd s \bigg|~.
\end{align*}
Using the fact that $1-\frac{s^2}{d}\leq \frac{3}{4}$ for $|s|\geq \frac{\sqrt{d}}{2}$ we bound the first integral in the last display by
\begin{align*}
   \bigg| \int_{\tfrac{\sqrt{d}}{2}\leq |s|\leq \sqrt{d}} e^{i2\pi rs/\sqrt{d}}\frac{s}{\sqrt{d}}\bigg(1-\frac{s^2}{d}\bigg)^{(d-3)/2}\dd s \bigg|\leq \sqrt{d}\bigg( \frac{3}{4}\bigg)^{\tfrac{d-3}{2}}\lesssim e^{-d/10}~.
\end{align*}
To estimate the second integral
     \begin{align*}
      \bigg| \int_{|s|\leq \tfrac{\sqrt{d}}{2}} e^{i2\pi rs/ \sqrt{d}}\frac{s}{\sqrt{d}}\bigg(1-\frac{s^2}{d}\bigg)^{(d-3)/2}\dd s \bigg|~
      \end{align*}
we change the contour of integration. Let $\gamma:=\gamma_0\cup\gamma_1\cup\gamma_2\cup\gamma_3$ where
\begin{align*}
    &\gamma_0(s):=s \qquad &\text{for} \; s\in[-\tfrac{\sqrt{d}}{2},\tfrac{\sqrt{d}}{2}]~,\\
    &\gamma_1(s):=is+\frac{\sqrt{d}}{2} \qquad &\text{for} \; s\in[0,1]~,\\
    &\gamma_2(s):=-s+i \qquad &\text{for} \; s\in[-\tfrac{\sqrt{d}}{2},\tfrac{\sqrt{d}}{2}]~,\\
     &\gamma_3(s):=i(1-s)-\frac{\sqrt{d}}{2} \qquad &\text{for} \; s\in[0,1]~.
\end{align*}
As $z\mapsto e^{i2\pi rz/ \sqrt{d}}\tfrac{z}{\sqrt{d}}(1-\tfrac{z^2}{d})^{(d-3)/2}$ is holomorphic in $\lbrace z\in\mathbb{C}:\, |z|<\sqrt{d}\rbrace$, by Cauchy integral theorem we have
\begin{align*}
     \bigg| \int_{|s|\leq \tfrac{\sqrt{d}}{2}} e^{i2\pi rs/ \sqrt{d}}\frac{s}{\sqrt{d}}\bigg(1-\frac{s^2}{d}\bigg)^{(d-3)/2}\dd s \bigg| \leq & \sum_{k\in\lbrace 1,3 \rbrace} \bigg| \int_0^1 e^{i2\pi r\gamma_k(s)/ \sqrt{d}}\frac{\gamma_k(s)}{\sqrt{d}}\bigg(1-\frac{\gamma_k(s)^2}{d}\bigg)^{(d-3)/2}\dd s \bigg|\\
    & + \bigg| \int_{|s|\leq \tfrac{\sqrt{d}}{2}} e^{i2\pi r(-s+i)/ \sqrt{d}}\frac{(i-s)}{\sqrt{d}}\bigg(1-\frac{(i-s)^2}{d}\bigg)^{(d-3)/2}\dd s \bigg|~.  
\end{align*}
We bound the first term in the right-hand-side of the last display as
\begin{align*}
    \sum_{k\in\lbrace 1,3 \rbrace} \bigg| \int_0^1 e^{i2\pi r\gamma_k(s)/ \sqrt{d}}\frac{\gamma_k(s)}{\sqrt{d}}\bigg(1-\frac{\gamma_k(s)^2}{d}\bigg)^{(d-3)/2}\dd s \bigg|\leq 2 \bigg(\frac{1}{\sqrt{d}}+\frac{1}{2}\bigg)\bigg(\frac{3}{4}+\frac{1}{d}+\frac{1}{\sqrt{d}}\bigg)^{(d-3)/2}\lesssim e^{-d/10}~.
\end{align*}
As $e^{i2\pi r(-s+i)/ \sqrt{d}}=e^{-2\pi r/\sqrt{d}}e^{-i2\pi rs/\sqrt{d}}$, to conclude it is enough to show that
\begin{align}\label{estimate_last_term_after_contour}
     \int_{|s|\leq \tfrac{\sqrt{d}}{2}} |i-s|\bigg|1-\frac{(i-s)^2}{d}\bigg|^{(d-3)/2}\dd s \lesssim 1~.
\end{align}
Note that
\begin{align*}
   % \frac{|i-s|}{\sqrt{d}}= \frac{\sqrt{1+s^2}}{\sqrt{d}}~, \qquad 
   \bigg| 1- \frac{(s-i)^2}{d} \bigg|\leq 1+\frac{1-s^2}{d}+\frac{2|s|}{d} \leq \begin{cases}
 1+\frac{6}{d} & \text{for} \; |s|\leq \frac{5}{2}\\
 1-\frac{s^2}{36d} & \text{for} \; \frac{5}{2}\leq |s| \leq \frac{\sqrt{d}}{2}\\
    \end{cases} 
\end{align*}
and
\begin{align*}%\label{useful_integral}
 \int_{|s|\leq 1} |s|(1-s^2)^{(d-3)/2}\dd s= \frac{2}{d-1}~.
 \end{align*}
Using these facts, we see that the left-hand-side of \eqref{estimate_last_term_after_contour} is bounded by a universal constant times
 \begin{align*}
      & \int_{|s|\leq \tfrac{5}{2}} \bigg( 1+\frac{6}{d} \bigg)^{(d-3)/2}\dd s +  \int_{\tfrac{5}{2}<|s|\leq \tfrac{\sqrt{d}}{2}} |s|\bigg( 1-\frac{s^2}{36d} \bigg)^{(d-3)/2}\dd s  
      \lesssim  1+  \int_{|s|\leq \tfrac{\sqrt{d}}{2}} |s| \bigg(1-\frac{s^2}{36d}\bigg)^{(d-3)/2}\dd s \\
        \lesssim & 1+ d\int_{|s|\leq 1} |s|(1-s^2)^{(d-3)/2}\dd s \lesssim 1+   \frac{d}{d-1} 
        \lesssim  1~,
 \end{align*}
 hence concluding the proof.
 \qed 

\section{ Pointwise estimates for the multipliers - verification of hypothesis of Theorem \ref{general_thm}}
\label{sec: pointmult}

In order to apply Theorem \ref{general_thm} to our proof of Theorem \ref{thm_gaussian_sphere}, we need to verify that the multipliers $\mu-g$ and $\mu -m$ satisfy the hypotheses \eqref{eq_general_thm_pointwise_estimate_multiplier_1}, \eqref{eq_general_thm_pointwise_estimate_multiplier_gradient}. This is the content of the next propositions.
\begin{proposition}\label{propo_estimate_for_mu_minus_g}
For all $d\geq 3$, the multiplier $\mu-g$ satisfies the estimates 
\begin{equation*}
    |\mu(\xi)-g(\xi)|\lesssim\frac{|\xi|}{\sqrt{d}}~, \qquad |\mu(\xi)-g(\xi)|\lesssim \frac{\sqrt{d}}{|\xi|},\qquad 
    |\langle \xi, \nabla(\mu-g)(\xi) \rangle | \lesssim 1~,
\end{equation*}
for a.e.\ $\xi\in \R^d.$
\end{proposition}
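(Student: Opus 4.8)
The plan is to prove the three estimates in turn, each time peeling off the Gaussian contribution — dispatched by direct differentiation of $g(\xi)=e^{-2\pi^2|\xi|^2/d}$ — and doing the real work on the spherical multiplier $\mu$. For $\mu$ the main tool is the oscillatory integral representation
\[
\mu(\xi)=c_d\int_{-1}^1 e^{2\pi i|\xi|s}(1-s^2)^{(d-3)/2}\,\dd s,\qquad c_d=\frac{\Gamma(d/2)}{\sqrt\pi\,\Gamma(\tfrac{d-1}{2})},
\]
together with the elementary facts $\int_{-1}^1(1-s^2)^{(d-3)/2}\,\dd s=1/c_d$ and $c_d\lesssim\sqrt d$ (standard Gamma-function estimates); equivalently, $\mu(\xi)=\mathbb{E}\,e^{-2\pi i|\xi|\omega_1}$ for $\omega$ uniform on $\mathbb{S}^{d-1}$, so that $\mathbb{E}\,\omega_1=0$ and $\mathbb{E}\,\omega_1^2=1/d$. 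Besides this I will only use the elementary inequalities $|e^{i\theta}-1|\le\min\{|\theta|,2\}$, $1-e^{-t}\le\min\{t,1\}$ for $t\ge0$, $te^{-t}\le e^{-1}$, and $\min\{at^2,1\}\le\sqrt a\,t$ for $t\ge0$.

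For the first estimate I use $\mu(0)=g(0)=1$ and bound the two differences from $1$ separately: $|g(\xi)-1|=1-e^{-2\pi^2|\xi|^2/d}\le\min\{2\pi^2|\xi|^2/d,1\}\lesssim|\xi|/\sqrt d$, while $|\mu(\xi)-1|\le 2\pi|\xi|\,\mathbb{E}|\omega_1|\le 2\pi|\xi|/\sqrt d$ by the vanishing of $\mathbb{E}\,\omega_1$, the bound $|e^{i\theta}-1|\le|\theta|$, and Cauchy--Schwarz (one in fact gets $|\mu(\xi)-1|\lesssim|\xi|^2/d$ from the vanishing first moment, so the stated bound is not sharp); adding gives the claim. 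For the second estimate there is nothing to do when $|\xi|\le\sqrt d$, since $|\mu|,|g|\le1\le\sqrt d/|\xi|$. When $|\xi|>\sqrt d$, I estimate $g(\xi)\le d/(2\pi^2|\xi|^2)\lesssim\sqrt d/|\xi|$, and for $\mu$ with $d\ge4$ a single integration by parts in the integral representation — the boundary term vanishing because $(1-s^2)^{(d-3)/2}$ vanishes at $s=\pm1$ — produces a factor $(2\pi|\xi|)^{-1}$ and, using $\int_{-1}^1|s|(1-s^2)^{(d-5)/2}\,\dd s=\tfrac{2}{d-3}$, yields $|\mu(\xi)|\le c_d/(\pi|\xi|)\lesssim\sqrt d/|\xi|$; the case $d=3$ is covered by the closed form $\mu(\xi)=\sin(2\pi|\xi|)/(2\pi|\xi|)$.

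For the gradient estimate, differentiating $g$ gives $\langle\xi,\nabla g(\xi)\rangle=-\tfrac{4\pi^2|\xi|^2}{d}e^{-2\pi^2|\xi|^2/d}$, of modulus at most $2e^{-1}$. Since $\mu$ is radial, $\langle\xi,\nabla\mu(\xi)\rangle=r\tfrac{\dd}{\dd r}\mu(r)$ at $r=|\xi|$, and by identity \eqref{second_identity_rdmudr} this equals $i2\sqrt\pi\,\tfrac{\Gamma(d/2)}{\Gamma((d-1)/2)}\,r\int_{-1}^1 e^{2\pi irs}s(1-s^2)^{(d-3)/2}\,\dd s$. For $d\ge4$ I integrate by parts once in this integral, using $\tfrac{\dd}{\dd s}\!\big[s(1-s^2)^{(d-3)/2}\big]=(1-s^2)^{(d-5)/2}\big(1-(d-2)s^2\big)$ and the vanishing of the boundary term; the factor $r$ then cancels the $(2\pi r)^{-1}$ produced, leaving $|\langle\xi,\nabla\mu(\xi)\rangle|\le\tfrac{1}{\sqrt\pi}\,\tfrac{\Gamma(d/2)}{\Gamma((d-1)/2)}\int_{-1}^1(1-s^2)^{(d-5)/2}\,|1-(d-2)s^2|\,\dd s$. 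Bounding $|1-(d-2)s^2|\le1+(d-2)s^2$ turns the integral into a sum of two Beta integrals which, after the prefactor $\Gamma(d/2)/\Gamma((d-1)/2)$ is multiplied in, collapse to $\tfrac{2(d-2)}{d-3}\le 4$ for $d\ge4$; the case $d=3$ gives $r\tfrac{\dd}{\dd r}\mu(r)=\cos(2\pi r)-\sin(2\pi r)/(2\pi r)$, of modulus at most $2$. Adding the two contributions yields $|\langle\xi,\nabla(\mu-g)(\xi)\rangle|\lesssim1$.

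The main obstacle is the interaction between the large-$|\xi|$ regime and uniformity in $d$. Near the origin everything reduces to second-order Taylor information — which is precisely why the reparametrization $\tmG_t=\mG_{t/(2d)}$ is chosen, matching the two multipliers to second order — but for $|\xi|$ large the trivial bounds $|\mu|,|g|\le1$ are useless, and one must extract the oscillatory cancellation in $\mu$ by integration by parts, all the while ensuring that the Gamma-ratio prefactor $\Gamma(d/2)/\Gamma((d-1)/2)\sim\sqrt{d/2}$ is exactly absorbed by the $d^{-1/2}$-sized gain from the Beta integrals (equivalently, by the factors $(d-3)^{-1}$ coming from the integrations by parts). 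Keeping every implied constant independent of $d$ throughout this Bessel/Beta bookkeeping, rather than any single conceptual step, is where the proof really lies. An alternative route for the gradient is to quote the pointwise bound \eqref{estimate_as_in_MSW} directly, which already gives $\lesssim1$ except for astronomically large $|\xi|$, and to cover that remaining range with one further integration by parts or Bessel large-argument asymptotics; the single extra integration by parts used above is more self-contained.
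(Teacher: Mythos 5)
Your proof is correct, and for the spherical multiplier it takes a genuinely different and more self-contained route than the paper. The paper reduces the proposition to the three analogous bounds for $\mu$ alone (its Proposition 4.3), exactly as you do, but then imports $|\mu(\xi)-1|\le 2\pi^2|\xi|^2/d$ and $|\mu(\xi)|\lesssim(\sqrt d/|\xi|)^{1/2}$ from \cite{MSW24}, invokes Krasikov's bound $|J_\nu(r)|\le r^{-1/2}$ together with Stirling for the regime $|\xi|\ge d$, and handles the gradient for $|\xi|\le d$ via the contour-shift estimate \eqref{estimate_as_in_MSW}. You instead get everything from the Poisson-type integral representation of $\mu$ by elementary means: Cauchy--Schwarz against $\mathbb{E}\,\omega_1^2=1/d$ for the first bound, and a single integration by parts — with the exact Beta-integral identities absorbing the prefactor $\Gamma(d/2)/\Gamma((d-1)/2)\sim\sqrt{d/2}$ into the explicit constants $c_d/\pi$ and $2(d-2)/(d-3)$ — for the second and third, with $d=3$ dispatched by the closed form $\sin(2\pi r)/(2\pi r)$. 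I verified the key computations: $\tfrac{\dd}{\dd s}[s(1-s^2)^{(d-3)/2}]=(1-s^2)^{(d-5)/2}(1-(d-2)s^2)$, the two Beta integrals, and the resulting bound $2(d-2)/(d-3)\le 4$ for $d\ge4$ are all right, and the boundary terms do vanish for $d\ge4$. Your route buys uniformity in $d$ with fully explicit constants and no external citations or case-splitting at $|\xi|=d$; the paper's route is shorter on the page because it leans on \cite{MSW24}, and its contour-shift lemma yields the stronger decay $e^{-2\pi r/\sqrt d}$ which is reused elsewhere. One cosmetic point: in your first estimate the vanishing of $\mathbb{E}\,\omega_1$ is not actually needed for the claimed $|\xi|/\sqrt d$ bound (only for the sharper $|\xi|^2/d$ bound you mention in passing); Cauchy--Schwarz alone suffices.
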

\begin{proposition}\label{propo_estimates_mu_minus_m}
For all $d\ge 3$ the multiplier $\mu-m$ satisfies the estimates 
\begin{equation*}
%\label{pointwise_estimate_multiplier_1}
    |\mu(\xi)-m(\xi)|\lesssim \frac{\sqrt{d}} {|\xi|} ~,
\qquad
    |\mu(\xi)-m(\xi)|\lesssim\frac{\sqrt{d}}{|\xi|},\qquad
    \vert \langle \xi, \nabla(\mu-m)(\xi)\rangle \vert \lesssim 1,
\end{equation*}
for a.e.\ $\xi\in \R^d.$
\end{proposition}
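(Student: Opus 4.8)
The plan is to prove the three pointwise bounds for $\mu - m$ by exploiting the integral identity $m(\xi) = d\int_0^1 \mu(s\xi)\, s^{d-1}\,\dd s$ together with the bounds for $\mu$ and its radial derivative recorded in Section \ref{sec: auxlem}. Since both $\mu$ and $m$ are radial, I write $\mu(\xi) = \mu(|\xi|)$ and $m(\xi) = m(|\xi|)$ with the obvious abuse of notation, and set $r = |\xi|$. The key elementary identity is
\begin{align*}
\mu(r) - m(r) = d\int_0^1 \big(\mu(r) - \mu(sr)\big)\, s^{d-1}\,\dd s = -d\int_0^1 s^{d-1}\int_s^1 r\mu'(\rho r)\,\frac{\dd\rho}{\rho}\cdot\rho\,\dd s,
\end{align*}
more cleanly written as $\mu(r) - m(r) = d\int_0^1 s^{d-1}\int_{sr}^{r}\mu'(\tau)\,\dd\tau\,\dd s$; after interchanging the order of integration this becomes $\int_0^r \mu'(\tau)\big(1 - (\tau/r)^d\big)\,\dd\tau$, so everything reduces to controlling $\tau\mapsto\mu'(\tau)$, equivalently $r\frac{\dd}{\dd r}\mu(r)$, for which \eqref{second_identity_rdmudr} gives the representation $r\mu'(r) = i2\sqrt{\pi}\,\frac{\Gamma(d/2)}{\Gamma(d/2-1/2)}\,r\int_{-1}^1 e^{i2\pi rs}s(1-s^2)^{(d-3)/2}\,\dd s$, and \eqref{estimate_as_in_MSW} bounds the integral by $d^{-1}e^{-2\pi r/\sqrt d} + d^{-1/2}e^{-d/10}$. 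Note that the Gamma prefactor satisfies $\frac{\Gamma(d/2)}{\Gamma(d/2-1/2)}\lesssim\sqrt d$.

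For the decay bound $|\mu(r) - m(r)|\lesssim \sqrt d/r$: for large $r$ I simply use $|\mu(r)| + |m(r)|\lesssim \sqrt d/r$, which for $\mu$ follows from the standard large-argument asymptotics of Bessel functions (the normalization $\frac{\Gamma(d/2)}{\pi^{d/2-1}}r^{-d/2+1}J_{d/2-1}(2\pi r)$ decays like $r^{-1/2}$ times a $d$-dependent constant, and one checks the $d$-dependence gives the $\sqrt d$ — in fact this is where the conditions were phrased with $\sqrt d$ precisely to absorb such constants), and for $m$ follows by the same integral bound. For the growth bound $|\mu(r) - m(r)|\lesssim r/\sqrt d$: here I integrate the representation for $\mu'$. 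From $|r\mu'(r)|\lesssim \sqrt d\cdot r\cdot\big(d^{-1}e^{-2\pi r/\sqrt d} + d^{-1/2}e^{-d/10}\big)$ I get $|\mu'(\tau)|\lesssim d^{-1/2}e^{-2\pi\tau/\sqrt d} + e^{-d/10}$ (the second term being harmless), and then $|\mu(r) - m(r)|\le \int_0^r|\mu'(\tau)|\,\dd\tau\lesssim d^{-1/2}\int_0^r e^{-2\pi\tau/\sqrt d}\,\dd\tau + re^{-d/10}\lesssim 1 + re^{-d/10}$. This gives a bound $\lesssim r/\sqrt d$ only when $r\gtrsim\sqrt d$; for $r\lesssim\sqrt d$ I instead expand: $\mu(0) = m(0) = 1$, and $|\mu(r) - m(r)| = |\int_0^r\mu'(\tau)(1-(\tau/r)^d)\dd\tau|\lesssim\int_0^r|\mu'(\tau)|\dd\tau\lesssim d^{-1/2}\cdot r$ directly, since $\int_0^r e^{-2\pi\tau/\sqrt d}\dd\tau\le r$. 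So the two regimes together give $|\mu(r)-m(r)|\lesssim r/\sqrt d$ throughout, after also checking that $re^{-d/10}\lesssim r/\sqrt d$.

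For the gradient bound $|\langle\xi,\nabla(\mu-m)(\xi)\rangle| = |r\frac{\dd}{\dd r}(\mu-m)(r)|\lesssim 1$: differentiating $m(r) = d\int_0^1\mu(sr)s^{d-1}\dd s$ in $r$ gives $m'(r) = d\int_0^1\mu'(sr)s^d\,\dd s$, so $rm'(r) = d\int_0^1 (sr)\mu'(sr)s^{d-1}\dd s$, an average of the function $\tau\mapsto\tau\mu'(\tau)$ against the probability-like weight $d\,s^{d-1}\dd s$; hence $|rm'(r)|\le\sup_{0\le\tau\le r}|\tau\mu'(\tau)|$. Combined with the bound $|\tau\mu'(\tau)|\lesssim \sqrt d\cdot\tau\cdot(d^{-1}e^{-2\pi\tau/\sqrt d} + d^{-1/2}e^{-d/10})\lesssim (\tau/\sqrt d)e^{-2\pi\tau/\sqrt d} + \tau e^{-d/10}\lesssim 1 + \tau e^{-d/10}$, and noting that on the relevant range the quantity $\tau e^{-d/10}$ is bounded (one must be slightly careful here: if $\tau$ ranges up to $r$ which is unbounded this term is not bounded, so for $|r\mu'(r)|$ and $|rm'(r)|$ one should split at $r\sim\sqrt d$ and for $r\gtrsim\sqrt d$ use instead the Bessel large-argument expansion for $\mu'$ and the analogous integral estimate for $m'$, giving decay), one concludes $|r\frac{\dd}{\dd r}(\mu - m)|\lesssim 1$.

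The main obstacle I anticipate is not any single estimate but rather the bookkeeping of the $d$-dependence: making sure the Gamma-function prefactors, the $e^{-d/10}$ error terms, and the switch between the ``$\tau\lesssim\sqrt d$'' and ``$\tau\gtrsim\sqrt d$'' regimes all combine to exactly the advertised powers of $d$ (and in particular that nothing forces a worse bound like $d^{0}$ in place of the needed $d^{-1/2}$ or $d^{1/2}$). The cleanest route is probably to prove once and for all a single lemma of the form $|\mu'(\tau)|\lesssim d^{-1/2}e^{-c\tau/\sqrt d}$ and $|\tau\mu'(\tau)|\lesssim 1$ uniformly in $d\ge 3$ and $\tau\ge 0$ — combining \eqref{second_identity_rdmudr}, \eqref{estimate_as_in_MSW}, and Bessel asymptotics for the large-$\tau$ tail — and then derive all three bounds for $\mu - m$ mechanically from it via the two integral identities above, exactly as in the handling of $\mu - g$ in Proposition \ref{propo_estimate_for_mu_minus_g}.
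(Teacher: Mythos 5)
Your proposal is correct in substance but takes a genuinely different route from the paper. The paper proves the three estimates for $\mu$ alone (Proposition \ref{propo_estimate_for_mu}), quotes the analogous known estimates for the ball multiplier $m$ from Bourgain's work \cite{B86} (made explicit in \cite[Lemma 2.11]{BMSW20}), and concludes by the triangle inequality. You instead derive everything about $m$ and $\mu-m$ from $\mu$ via the identity $m(\xi)=d\int_0^1\mu(s\xi)s^{d-1}\,\dd s$, reducing all three bounds to control of $\tau\mapsto\tau\mu'(\tau)$ through \eqref{second_identity_rdmudr} and \eqref{estimate_as_in_MSW}. Your route is more self-contained (no external reference is needed for the ball multiplier, and the observation $|rm'(r)|\le\sup_{0\le\tau\le r}|\tau\mu'(\tau)|$ is a clean substitute for it), at the cost of more computation; the paper's route is shorter but leans on a citation. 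Incidentally, the paper does use exactly your integral identity, but later and for a different purpose: in Proposition \ref{pro: g-mu, m-mu} it yields the sharper $L^\infty$ bound $|\mu-m|\lesssim d^{-1}$ directly from $|\langle\xi,\nabla\mu(\xi)\rangle|\lesssim 1$.

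Three small repairs are needed. First, the interchange of integration gives $\mu(r)-m(r)=\int_0^r\mu'(\tau)(\tau/r)^d\,\dd\tau$ rather than $\int_0^r\mu'(\tau)\bigl(1-(\tau/r)^d\bigr)\,\dd\tau$, since the set of $s$ with $\tau\in[sr,r]$ is $\{s\le \tau/r\}$; this is harmless because you only use that the kernel is nonnegative and bounded by $1$. Second, your split point for the gradient bound should be $r\sim d$ rather than $r\sim\sqrt d$: the Bessel bound \eqref{eq_bessel_leq_r_power_minus_one_half} needed in \eqref{first_identity_rdmudr} requires $2\pi r\ge d-2$, while the troublesome term $re^{-d/10}$ coming from \eqref{estimate_as_in_MSW} stays $\lesssim 1$ throughout $r\le d$, so the two regimes do mesh --- just not at $\sqrt d$. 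Third, the decay bound $|\mu(\xi)|\lesssim\sqrt d/|\xi|$ is not a one-line consequence of ``standard large-argument asymptotics'': the classical expansion of $J_\nu$ is uniform only for arguments much larger than $\nu^2$, and the paper's proof of this bound runs through three separate regimes ($|\xi|\le\sqrt d$, $\sqrt d<|\xi|<d$, $|\xi|\ge d$) using inputs from \cite{MSW24} and Krasikov's inequality. Since that estimate is precisely the second assertion of Proposition \ref{propo_estimate_for_mu}, you may simply invoke it, but as written this is the one genuinely unproved step in your argument.
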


It is straightforward to see that $g(\xi)  =\exp\big(-\tfrac{2\pi^2|\xi|^2}{d}\big)$ satsfies
\begin{equation}
\label{eq: g est}
|g(\xi)-1|\lesssim \frac{|\xi|}{\sqrt{d}},\quad |g(\xi)|\lesssim \frac{\sqrt{d}}{|\xi|},\quad  |\langle \xi, \nabla g(\xi) \rangle | \lesssim 1. 
\end{equation}
Furthermore, it is well known that analogous estimates hold with $g$ replaced by $m$. This follows from \cite{B86}, for an explicit justification see e.g.\ \cite[Lemma 2.11]{BMSW20}. Therefore, to prove Propositions \ref{propo_estimate_for_mu_minus_g} and \ref{propo_estimates_mu_minus_m} it is enough to justify that \eqref{eq: g est} is also true when $g$ is replaced by $\mu.$ This is very close to the analysis in \cite[Section 4]{MSW24}. However, since the uniform estimate for the gradient $ |\langle \xi, \nabla \mu(\xi) \rangle | \lesssim 1$ is only mentioned in \cite{MSW24}  without proof, see \cite[eq.\ (5.20)]{MSW24}, we decided to provide a detailed proof of the next proposition. Note that,  in view of the discussion in this paragraph, Proposition \ref{propo_estimate_for_mu} implies both Propositions \ref{propo_estimate_for_mu_minus_g} and \ref{propo_estimates_mu_minus_m}.

\begin{proposition}\label{propo_estimate_for_mu}
For all $d\geq 3$, the multiplier  $\mu$ satisfies the estimates
\begin{equation*}
    |\mu(\xi)-1|\lesssim \frac{|\xi|}{\sqrt{d}}~, \qquad |\mu(\xi)|\lesssim \frac{\sqrt{d}}{|\xi|},\qquad
 |\langle \xi, \nabla \mu(\xi) \rangle | \lesssim 1,
\end{equation*}
for a.e.\ $\xi\in \R^d.$
\end{proposition}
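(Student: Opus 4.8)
The plan is to reduce all three estimates to the one-dimensional oscillatory integral coming from the integral representation \eqref{integral_repr_bessel}. Taking $\nu=\tfrac d2-1$ and $r=2\pi|\xi|$ there and substituting into the definition of $\mu$, the factors $\pi^{d/2-1}|\xi|^{d/2-1}$ cancel and one is left with
$$\mu(\xi)=c_d\int_{-1}^{1}e^{2\pi i|\xi|s}\,(1-s^2)^{(d-3)/2}\,\dd s,\qquad c_d:=\frac{\Gamma(d/2)}{\sqrt{\pi}\,\Gamma(\tfrac{d-1}{2})}.$$
Evaluating at $\xi=0$ and recognising the resulting integral as the Beta integral $B(\tfrac12,\tfrac{d-1}{2})$ shows that $c_d^{-1}=\int_{-1}^1(1-s^2)^{(d-3)/2}\dd s$, hence $\mu(0)=1$; moreover Stirling's formula (equivalently Wendel's inequality $\Gamma(x+\tfrac12)/\Gamma(x)\le\sqrt x$ applied with $x=\tfrac{d-1}{2}$) gives $c_d\lesssim\sqrt d$, which is the only quantitative fact about $c_d$ that I shall use.

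For the first estimate I would write $\mu(\xi)-1=c_d\int_{-1}^1(e^{2\pi i|\xi|s}-1)(1-s^2)^{(d-3)/2}\dd s$, apply $|e^{2\pi i|\xi|s}-1|\le 2\pi|\xi|\,|s|$, and combine this with the elementary identity $\int_{-1}^1|s|(1-s^2)^{(d-3)/2}\dd s=\tfrac{2}{d-1}$ and with $c_d\lesssim\sqrt d$ to obtain $|\mu(\xi)-1|\lesssim |\xi|\sqrt d/d=|\xi|/\sqrt d$. For the second estimate, the trivial bound $|\mu(\xi)|\le1$ (valid since $\mu$ is the Fourier transform of a probability measure) disposes of the range $|\xi|\lesssim\sqrt d$; for larger $|\xi|$ — in fact for all $\xi\ne0$ once $d\ge4$ — I would integrate by parts once, using that $(1-s^2)^{(d-3)/2}$ vanishes at $s=\pm1$, to get $\mu(\xi)=\tfrac{c_d(d-3)}{2\pi i|\xi|}\int_{-1}^1 e^{2\pi i|\xi|s}s(1-s^2)^{(d-5)/2}\dd s$, whence $\int_{-1}^1|s|(1-s^2)^{(d-5)/2}\dd s=\tfrac2{d-3}$ yields $|\mu(\xi)|\le c_d/(\pi|\xi|)\lesssim\sqrt d/|\xi|$. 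The case $d=3$ is immediate from the explicit formula $\mu(\xi)=\sin(2\pi|\xi|)/(2\pi|\xi|)$.

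For the gradient estimate I would use that $\mu$ is radial, so $\langle\xi,\nabla\mu(\xi)\rangle=r\tfrac{\dd}{\dd r}\mu(r)\big|_{r=|\xi|}$, together with identity \eqref{second_identity_rdmudr}, which expresses this as $2\sqrt\pi\,i\,\tfrac{\Gamma(d/2)}{\Gamma((d-1)/2)}\,r\int_{-1}^1 e^{2\pi irs}s(1-s^2)^{(d-3)/2}\dd s$, a quantity whose prefactor is $\lesssim\sqrt d$. For $r\le\sqrt d$ I would bound the integral trivially by $\int_{-1}^1|s|(1-s^2)^{(d-3)/2}\dd s=\tfrac2{d-1}$, so that the whole expression is $\lesssim\sqrt d\cdot\sqrt d\cdot d^{-1}\lesssim1$ (alternatively, \eqref{estimate_as_in_MSW} also covers this range). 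For $r>\sqrt d$ and $d\ge4$ I would integrate by parts using $\tfrac{\dd}{\dd s}\big(s(1-s^2)^{(d-3)/2}\big)=(1-s^2)^{(d-5)/2}\big(1-(d-2)s^2\big)$ and the vanishing of the boundary terms, rewriting $r\int_{-1}^1 e^{2\pi irs}s(1-s^2)^{(d-3)/2}\dd s$ as $-\tfrac1{2\pi i}\int_{-1}^1 e^{2\pi irs}(1-s^2)^{(d-5)/2}\big(1-(d-2)s^2\big)\dd s$; bounding the latter by $\int_{-1}^1(1-s^2)^{(d-5)/2}\dd s+(d-2)\int_{-1}^1 s^2(1-s^2)^{(d-5)/2}\dd s$ and evaluating both Beta integrals (each $\lesssim d^{-1/2}$ by Stirling) gives $\lesssim d^{-1/2}$, hence $|\langle\xi,\nabla\mu(\xi)\rangle|\lesssim\sqrt d\cdot d^{-1/2}\lesssim1$. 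Once more $d=3$ follows at once from the explicit formula.

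I expect the gradient estimate to be the only genuinely delicate part: the bound obtained directly from \eqref{second_identity_rdmudr} and \eqref{estimate_as_in_MSW} degrades like $r\,e^{-d/10}$ for large $r$, so one really has to extract the extra factor $r^{-1}$ by integration by parts, and the bookkeeping there requires tracking the precise $d$-dependence of the Beta integrals together with separate checks of the low-dimensional cases $d=3$ (where the boundary terms in the integration by parts do not vanish) and $d=4$ (where the exponent $(d-5)/2=-\tfrac12$ is negative but still integrable).
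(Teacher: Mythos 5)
Your proposal is correct, and it takes a genuinely different and more self-contained route than the paper. The paper handles the first two estimates by splitting into regimes of $|\xi|$ and quoting earlier results: $|\mu(\xi)-1|\le 2\pi^2(|\xi|/\sqrt d)^2$ and $|\mu(\xi)|\lesssim(\sqrt d/|\xi|)^{1/2}$ from \cite{MSW24} for $|\xi|\le\sqrt d$, the exponential-decay bound of \cite[Lemma~4.1]{MSW24} for $\sqrt d<|\xi|<d$, and Krasikov's bound $|J_\nu(r)|\le r^{-1/2}$ together with Stirling for $|\xi|\ge d$; for the gradient it again splits at $r=d$, using the Bessel recursion identity \eqref{first_identity_rdmudr} with Krasikov's bound for large $r$ and the identity \eqref{second_identity_rdmudr} combined with the contour-shift estimate \eqref{estimate_as_in_MSW} for $r\le d$. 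You instead work directly from the one-dimensional integral representation: the bound $|e^{2\pi i|\xi|s}-1|\le 2\pi|\xi||s|$ plus the first-moment Beta integral gives the first estimate in one line, a single integration by parts gives $|\mu(\xi)|\le c_d/(\pi|\xi|)\lesssim\sqrt d/|\xi|$ uniformly in $\xi\ne0$ (collapsing the paper's three cases into one), and a second integration by parts on \eqref{second_identity_rdmudr}, with the explicit derivative $(1-s^2)^{(d-5)/2}(1-(d-2)s^2)$ and the resulting Beta integrals each of size $O(d^{-1/2})$, gives the gradient bound without invoking the contour-shift lemma or any Bessel asymptotics. You correctly flag the only delicate points: the vanishing of the boundary terms (which forces the separate explicit treatment of $d=3$ via $\mu(\xi)=\sin(2\pi|\xi|)/(2\pi|\xi|)$) and the integrable singularity when $d=4$. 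What the paper's route buys is the reuse of machinery (\eqref{estimate_as_in_MSW} and the results of \cite{MSW24}) that it has already developed for other purposes; what yours buys is elementarity, explicit constants, and independence from the cited literature.
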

\proof
We start by showing that
$$|\mu(\xi)-1|\leq 2\pi^2\frac{|\xi|}{\sqrt{d}}~.$$
We split the analysis into two cases. First, we consider the case $|\xi|\geq \sqrt{d}$. As $|\mu(\xi)|\leq 1$, we easily have that
$$|\mu(\xi)-1|\leq 2  \frac{|\xi|}{\sqrt{d}}~.$$
Let's now consider the case $|\xi|<\sqrt{d}$. It has been shown in \cite[Equation~4.9]{MSW24} that $|\mu(\xi)-1|\leq 2\pi^2 \big( \frac{|\xi|}{\sqrt{d}}\big)^{2}$. Combining this with the fact that, by hypothesis, $|\xi|<\sqrt{d}$ we obtain that
$$|\mu(\xi)-1|\leq  2\pi^2 \frac{|\xi|}{\sqrt{d}}~.$$

Next, we want to show that 
$$|\mu(\xi)| \lesssim \frac{\sqrt{d}}{|\xi|}.$$ 
We split the analysis into three cases. Assume first that $\sqrt{d}\geq |\xi|$. It has been shown in \cite[Equation~4.10]{MSW24} that  $|\mu(\xi)|\lesssim (\sqrt{d}/|\xi|)^{1/2}$. 
As by hypothesis $\sqrt{d}/|\xi|\geq 1$ it immediately follows that in this regime
$$|\mu(\xi)|\lesssim\frac{\sqrt{d}}{|\xi|}~.$$
Next we consider the case $\sqrt{d} < |\xi|<d$. It has been shown in \cite[Lemma~4.1]{MSW24} that there exists a constant $\alpha>0$ independent of the dimension such that for all $d\geq 2$ and all $\xi\in\mathbb{R}^d$ we have
$$|\mu(\xi)|\lesssim  e^{-2\pi|\xi|/\sqrt{d}}+e^{-\alpha d}.$$
Using this estimate, the fact that $e^{-|x|}\leq \tfrac{1}{|x|}$, and the hypothesis $1 < d/|\xi|$ we have 
\begin{align*}
    |\mu(\xi)|& \lesssim e^{-2\pi|\xi|/\sqrt{d}}+e^{-\alpha d}  \\
    & \lesssim \frac{\sqrt{d}}{|\xi|} +\frac{1}{\alpha\sqrt{d}}  \lesssim   \frac{\sqrt{d}}{|\xi|} +\frac{1}{\alpha\sqrt{d}}\frac{d}{|\xi|} \\
    & \lesssim \frac{\sqrt{d}}{|\xi|}~.
\end{align*}
We are left to consider the case $|\xi|\geq d$. Here we use the pointwise bound 
\begin{align}\label{eq_bessel_leq_r_power_minus_one_half}
|J_\nu(r)|\leq r^{-1/2}
\end{align}
which holds for $\nu\geq \tfrac{1}{2}$ and $r\geq 2\nu$, see e.g. \cite[Theorem~3]{Kr14}. This shows that for $d\ge 3$ we have
\begin{align*}
    |\mu(\xi)| & =\frac{\Gamma(\tfrac{d}{2})}{\pi^{d/2-1}}|\xi|^{-\tfrac{d}{2}+1} |J_{\tfrac{d}{2}-1}(2\pi|\xi|)| \leq \frac{\Gamma(\tfrac{d}{2})}{\pi^{d/2-1}}|\xi|^{-\tfrac{d}{2}+\tfrac{1}{2}} \\
    & \leq \frac{\Gamma(\tfrac{d}{2})}{\pi^{d/2-1}}d^{-\tfrac{d}{2}+\tfrac{3}{2}}|\xi|^{-1} \lesssim\frac{1}{|\xi|}\\
    & \lesssim\frac{\sqrt{d}}{|\xi|},
\end{align*}
where in the third inequality we used Stirling's approximation for the Gamma function 
\begin{equation}
\label{eq: Stirling}
\Gamma(s) = \sqrt{2 \pi /s} (s / e)^{s} (1 + O(s^{-1})),\qquad s \to \infty.
\end{equation}

Finally, we are left to show that $|\langle \xi, \nabla \mu(\xi) \rangle|\lesssim 1$ for all $\xi\in\mathbb{R}^d$. 
We split the analysis into two cases. First, we consider the case $r=|\xi|\geq d$. Using identity \eqref{first_identity_rdmudr} together with \eqref{eq_bessel_leq_r_power_minus_one_half} and \eqref{eq: Stirling} we have that for $d\geq 3$ 
\begin{align*}
   \bigg| r\frac{\dd}{\dd r} \mu(r)\bigg|& \leq 2 \frac{\Gamma(\tfrac{d}{2})}{\pi^{d/2-1}}\big(\tfrac{d}{2}-1 \big) r^{-\tfrac{d}{2}+1}\big| J_{\tfrac{d}{2}-1}(2\pi r)\big| + 2\pi \frac{\Gamma(\tfrac{d}{2})}{\pi^{d/2-1}}r^{-\tfrac{d}{2}+2}\big|J_{\tfrac{d}{2}-2}(2\pi r)\big|\\
   & \leq 2 \frac{\Gamma(\tfrac{d}{2})}{\pi^{d/2-1}}\big(\tfrac{d}{2}-1 \big) r^{-\tfrac{d}{2}+\tfrac{1}{2}} + 2\pi \frac{\Gamma(\tfrac{d}{2})}{\pi^{d/2-1}}r^{-\tfrac{d}{2}+\tfrac{3}{2}}\\
   & \leq 2 \frac{\Gamma(\tfrac{d}{2})}{\pi^{d/2-1}}\big(\tfrac{d}{2}-1 \big) d^{-\tfrac{d}{2}+\tfrac{1}{2}} + 2\pi \frac{\Gamma(\tfrac{d}{2})}{\pi^{d/2-1}}d^{-\tfrac{d}{2}+\tfrac{3}{2}}\\
   & \lesssim 1~.
\end{align*}
Next, we consider the case $r=|\xi|\leq d$. This can be verified as outlined in \cite[Remark~5.1]{MSW24}, we include the arguments for completeness. Using identity \eqref{second_identity_rdmudr} together with the estimate \eqref{estimate_as_in_MSW} we obtain 
\begin{align*}
    \bigg| r\frac{\dd}{\dd r} \mu(r)\bigg|& \leq r (2\sqrt{\pi}) \frac{\Gamma(\tfrac{d}{2})}{\Gamma(\tfrac{d}{2}-\tfrac{1}{2})}  \bigg| \int_{-1}^1 e^{i2\pi rs}s(1-s^2)^{(d-3)/2}\dd s \bigg| \\
    & \lesssim  r \frac{\Gamma(\tfrac{d}{2})}{\Gamma(\tfrac{d}{2}-\tfrac{1}{2})}  \bigg(\frac{e^{-2\pi r/\sqrt{d}}}{d}+\frac{e^{-cd}}{\sqrt{d}}\bigg) \\
    & \lesssim r \bigg(\frac{e^{-2\pi r/\sqrt{d}}}{\sqrt{d}}+e^{-d/10}\bigg)\lesssim 1~.
\end{align*}
\qed

\section{ Proofs of the main results - Theorem \ref{thm: limit} and \ref{thm_gaussian_sphere}}
\label{sec: proof main}

In this section we first prove Theorem \ref{thm_gaussian_sphere} and then deduce from it Theorem \ref{thm: limit}.

\subsection{Proof of Theorem \ref{thm_gaussian_sphere}.}

Let $\mA_t,$ $t>0,$ denote the operator corresponding to any of the differences $\mS_t-\tmG_t,$ $\mS_t-\mB_t$ or $\tmG_t-\mB_t$, let $a$ be the multiplier symbol of $\mA_1$ and let $\mA_*$ be the corresponding maximal function. In the previous section we established that $a$ satisfies the assumptions of Theorem \ref{general_thm} with a universal constant $K\lesssim 1.$ Hence, applying this theorem we see that
\begin{equation*}
%\label{eq: max A}
\|\mA_*\|_{L^2(\R^d)}\lesssim \|a\|_{L^{\infty}(\R^d)}^{1/4}.
\end{equation*}
Thus, in order to prove  Theorem \ref{thm_gaussian_sphere} it remains to justify that $\|a\|_{L^{\infty}(\R^d)}\lesssim d^{-\delta}$ for $a$ being any two among the differences of $g-\mu,$ $\mu-m$ and $g-m$ and $\delta>0$ being a universal constant. This is the content of the next proposition.

\begin{proposition}
 \label{pro: g-mu, m-mu}
For all $d\ge 3$ we have
         \begin{equation}
       \label{eq: m-mu}
       |m(\xi)-\mu(\xi)|\lesssim d^{-1},\qquad \xi\in \R^d.
       \end{equation}
Furthermore, for each $\alpha>0$ and all $d\ge 3$ it holds
       \begin{equation}
       \label{eq: g-mu}
       |g(\xi)-\mu(\xi)|\leq C(\alpha) d^{-1/4+\alpha},\qquad \xi \in \R^d,
       \end{equation}
       where the constant $C(\alpha)$ depends only on $\alpha.$
\end{proposition}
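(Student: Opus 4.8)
The plan is to prove both estimates by comparing $\mu$ with $g$ (Gaussian) and $m$ (ball) respectively, splitting $\R^d$ into the same frequency ranges used in the proof of Proposition~\ref{propo_estimate_for_mu} and exploiting the decay already established there. For the estimate \eqref{eq: m-mu}, I would start from the identity $m(\xi) = d\int_0^1 \mu(s\xi) s^{d-1}\,\dd s$ recorded in the Notation section. Writing $m(\xi) - \mu(\xi) = d\int_0^1 (\mu(s\xi) - \mu(\xi)) s^{d-1}\,\dd s$ (using $d\int_0^1 s^{d-1}\,\dd s = 1$), I would estimate $\mu(s\xi) - \mu(\xi)$ via the fundamental theorem of calculus applied to $t\mapsto \mu(t\xi)$ together with the gradient bound $|\langle \eta, \nabla\mu(\eta)\rangle| \lesssim 1$ from Proposition~\ref{propo_estimate_for_mu}: this gives $|\mu(s\xi)-\mu(\xi)| \le \int_s^1 |\langle t\xi,\nabla\mu(t\xi)\rangle|\,\tfrac{\dd t}{t} \lesssim \log(1/s)$. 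Then $d\int_0^1 \log(1/s)\, s^{d-1}\,\dd s = \tfrac1d \to 0$ at the rate $d^{-1}$, which is exactly the claimed bound. The one subtlety is that the gradient bound in Proposition~\ref{propo_estimate_for_mu} is stated ``a.e.,'' but $\mu$ is smooth away from the origin (and bounded), so the integral manipulation is legitimate; near $s=0$ the logarithmic singularity is harmless against $s^{d-1}$.

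For the estimate \eqref{eq: g-mu}, which is the harder one and the main obstacle, I would follow the strategy flagged in the introduction: express $\mu$ as an expectation over a standard Gaussian random vector. Concretely, if $X$ is a standard $d$-dimensional Gaussian vector, then by Lemma~\ref{lemma_identity_for_mu} (referenced in the introduction as the tool for this) one has a representation of the form $\mu(\xi) = \mathbb{E}\, \phi(|X|^2, \xi)$ for an appropriate $\phi$, with $g(\xi) = e^{-2\pi^2|\xi|^2/d}$ arising as the value one gets by replacing $|X|^2$ with its mean $d$. The difference $g(\xi) - \mu(\xi)$ is then controlled by the concentration of $|X|^2$ around $d$: using the Laurent--Massart deviation inequality (cited as \cite{LaMa}), $\mathbb{P}(||X|^2 - d| > t\sqrt{d}) \le 2e^{-ct^2}$ for moderate $t$, one splits the expectation into the event $\{||X|^2-d| \le d^{1/2+\alpha}\}$, where $\phi$ is close to $g$ by a first-order Taylor/Lipschitz estimate in the $|X|^2$ variable (contributing $\lesssim |\xi|^2/d \cdot d^{1/2+\alpha}/d$ type terms that must be further controlled using $|\xi|^2/d$-smallness or, in the large-$|\xi|$ regime, the already-proven decay $|\mu(\xi)|, |g(\xi)| \lesssim \sqrt{d}/|\xi|$), and the complementary event, whose probability $\lesssim e^{-cd^{2\alpha}}$ beats any polynomial.

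The genuinely delicate point is uniformity in $\xi$: the naive Taylor estimate of $\phi$ in its first argument degrades like $|\xi|^2$, so on the region $|\xi| \gtrsim \sqrt{d}$ one cannot close the argument that way. Here I would instead interpolate the two crude bounds — $|g(\xi)-\mu(\xi)|\lesssim 1$ always, and the refined small-$|\xi|$ bound $|g(\xi)-\mu(\xi)|\lesssim |\xi|^2/d^{3/2-\alpha}$ (or similar) valid for $|\xi|\lesssim d^{1/4}$ — together with the decay estimates from Proposition~\ref{propo_estimate_for_mu} on $|\xi|\gtrsim d^{1/4}$, where both $|\mu(\xi)| \lesssim \sqrt{d}/|\xi| \lesssim d^{1/4}$ and $|g(\xi)| \lesssim \sqrt{d}/|\xi| \lesssim d^{1/4}$, so their difference is automatically $\lesssim d^{1/4}\cdot d^{-1/2}\cdot(\text{something})$. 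Optimizing the crossover point between the small-frequency Taylor regime and the large-frequency decay regime produces the exponent $-1/4+\alpha$ with $C(\alpha)$ blowing up as $\alpha\to 0$ (reflecting the logarithmic losses in the concentration bound). Finally, the estimate for $g-m$ (needed to complete Theorem~\ref{thm_gaussian_sphere}) follows immediately from the triangle inequality $|g-m| \le |g-\mu| + |\mu-m| \lesssim d^{-1/4+\alpha} + d^{-1} \lesssim d^{-1/4+\alpha}$, so no separate argument is required.
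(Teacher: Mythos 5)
Your proposal follows essentially the same route as the paper. For \eqref{eq: m-mu} your fundamental-theorem-of-calculus argument is a cosmetic variant of the paper's integration by parts, which gives $\mu(\xi)-m(\xi)=\int_0^1\langle s\xi,(\nabla\mu)(s\xi)\rangle s^{d-1}\,\dd s$; both rest on the gradient bound of Proposition \ref{propo_estimate_for_mu}, and your computation $d\int_0^1\log(1/s)s^{d-1}\,\dd s=d^{-1}$ is correct. For \eqref{eq: g-mu} the paper does exactly what you outline: the representations $\mu(\xi)=\mathbb{E}\,e^{-2\pi i(X/|X|)\cdot\xi}$ and $g(\xi)=\mathbb{E}\,e^{-2\pi i(X/\sqrt d)\cdot\xi}$ from Lemma \ref{lem: identity_for_mu,g}, the Laurent--Massart concentration of $|X|^2$, and the high-frequency decay $|\mu|,|g|\lesssim\sqrt d/|\xi|$.

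One correction to your numerology in the second part. The estimate on the good event is \emph{linear} in $|\xi|$: $|e^{ia}-e^{ib}|\le|a-b|$ gives $\lesssim\big|\tfrac{1}{|X|}-\tfrac{1}{\sqrt d}\big|\,|X\cdot\xi|\lesssim d^{-1+\alpha}|X\cdot\xi|$, hence $\lesssim d^{-1+\alpha}|\xi|$ after taking expectations --- not the $|\xi|^2$-type term you wrote. Moreover the decay bound $\sqrt d/|\xi|$ is nontrivial only when $|\xi|\gg\sqrt d$, so your claim that on $|\xi|\gtrsim d^{1/4}$ one has $|\mu(\xi)|\lesssim\sqrt d/|\xi|\lesssim d^{1/4}$ is vacuous ($|\mu|,|g|\le 1$ always) and a crossover at $|\xi|\sim d^{1/4}$ cannot close the argument. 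The correct crossover is at $|\xi|\sim d^{3/4}$: the paper splits at $|\xi|=d^{1/2+\varepsilon}$, obtaining $d^{-\varepsilon}$ above the threshold and $e^{-d^{\alpha}}+d^{-1/2+\alpha+\varepsilon}$ below it, and $\varepsilon=1/4$ yields the claimed $d^{-1/4+\alpha}$. With that repair your outline matches the paper's proof.
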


Our proof of \eqref{eq: g-mu} relies crucially on two identities - one for $\mu$ and one for $g$ - which are the content of the following lemma. The proof of Lemma \ref{lem: identity_for_mu,g}  is a standard computation but we provide it for reader's convenience. Below $\mathcal{N}(0,\textrm{I}_d)$ stands for the multivariate normal distribution with identity $\textrm{I}_d$ covariance matrix.
\begin{lemma}\label{lem: identity_for_mu,g} The following pointwise identities hold for $\xi \in \R^d$
\begin{align}
\label{lemma_identity_for_g}
g(\xi)&=\mathop{\mathbb{E}}_{X\sim \mathcal{N}(0,\textrm{I}_d)} e^{- 2\pi i \tfrac{X}{\sqrt{d}}\cdot \xi},\\
\label{lemma_identity_for_mu}
   \mu(\xi) &= \mathop{\mathbb{E}}_{X\sim \mathcal{N}(0,\textrm{I}_d)} e^{-2\pi i \tfrac{X}{|X|}\cdot \xi}~.
\end{align}
\end{lemma}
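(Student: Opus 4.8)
The plan is to prove the two identities in Lemma~\ref{lem: identity_for_mu,g} by direct computation, exploiting the rotational invariance of the standard Gaussian measure.

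\textbf{Identity \eqref{lemma_identity_for_g}.} First I would compute the right-hand side of \eqref{lemma_identity_for_g} by factoring the expectation over the independent coordinates of $X=(X_1,\dots,X_d)$. Writing $\frac{X}{\sqrt{d}}\cdot\xi = \frac{1}{\sqrt d}\sum_{j=1}^d X_j \xi_j$ and using independence,
\[
\mathop{\mathbb{E}}_{X\sim \mathcal{N}(0,\mathrm{I}_d)} e^{- 2\pi i \tfrac{X}{\sqrt{d}}\cdot \xi}=\prod_{j=1}^d \mathop{\mathbb{E}} e^{-2\pi i \xi_j X_j/\sqrt d}=\prod_{j=1}^d e^{-2\pi^2 \xi_j^2/d}=e^{-2\pi^2|\xi|^2/d},
\]
where I used the standard formula $\mathbb{E}\,e^{itX_1}=e^{-t^2/2}$ for a one-dimensional standard Gaussian with $t=-2\pi\xi_j/\sqrt d$. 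This is exactly $g(\xi)$, so \eqref{lemma_identity_for_g} follows.

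\textbf{Identity \eqref{lemma_identity_for_mu}.} For the second identity the idea is that $\frac{X}{|X|}$ is uniformly distributed on $\mathbb{S}^{d-1}$ when $X\sim\mathcal{N}(0,\mathrm{I}_d)$, and that $\frac{X}{|X|}$ is independent of $|X|$. Concretely, using polar coordinates $X=\rho\omega$ with $\rho=|X|\ge 0$ and $\omega\in\mathbb{S}^{d-1}$, the density $(2\pi)^{-d/2}e^{-|x|^2/2}$ factors as a product of a radial density in $\rho$ and the normalized uniform surface measure $\sigma/\sigma(\mathbb{S}^{d-1})$ on $\mathbb{S}^{d-1}$. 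Hence for any bounded measurable $F$,
\[
\mathop{\mathbb{E}}_{X\sim\mathcal{N}(0,\mathrm{I}_d)} F\!\left(\tfrac{X}{|X|}\right)=\frac{1}{\sigma(\mathbb{S}^{d-1})}\int_{\mathbb{S}^{d-1}}F(\omega)\,\dd\sigma(\omega).
\]
Applying this with $F(\omega)=e^{-2\pi i \omega\cdot\xi}$ gives precisely the integral definition of $\mu(\xi)$ from the notation section, namely $\mu(\xi)=\frac{1}{\sigma(\mathbb{S}^{d-1})}\int_{\mathbb{S}^{d-1}}e^{-2\pi i x\cdot\xi}\dd\sigma(x)$, which proves \eqref{lemma_identity_for_mu}.

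There is no real obstacle here; the only point requiring a line of care is justifying the factorization of the Gaussian measure in polar coordinates (equivalently, the independence of $X/|X|$ and $|X|$), which is classical and follows immediately from the fact that $e^{-|x|^2/2}$ depends on $x$ only through $|x|$, so that the change of variables $\dd x=\rho^{d-1}\dd\rho\,\dd\sigma(\omega)$ separates the variables. I would state this factorization explicitly and then the two identities drop out by choosing the appropriate test function $F$.
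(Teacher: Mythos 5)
Your proof is correct and follows essentially the same route as the paper: the first identity is the standard Gaussian characteristic function computation (the paper invokes the Fourier transform of a Gaussian, you factor over coordinates — the same calculation), and the second is the polar-coordinate factorization of the Gaussian measure, which is exactly how the paper evaluates $\mathbb{E}\,e^{-2\pi i \frac{X}{|X|}\cdot\xi}$. No gaps.
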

\begin{proof} 
First we justify \eqref{lemma_identity_for_g}. Using the fact that the Fourier transform of a Gaussian function is itself a Gaussian function we have that
$$\mathop{\mathbb{E}}_{X\sim \mathcal{N}(0,\textrm{I}_d)} e^{- 2\pi i \tfrac{X}{\sqrt{d}}\cdot \xi}= \frac{1}{(2\pi)^{d/2}} \int_{\mathbb{R}^d} e^{-2\pi i \tfrac{x}{\sqrt{d}}\cdot\xi}e^{-|x|^2/2}dx = e^{-\tfrac{2\pi^2|\xi|^2}{d}}= g(\xi)~.$$

To prove \eqref{lemma_identity_for_mu} we compute
\begin{align*} \mathop{\mathbb{E}}_{X\sim \mathcal{N}(0,\textrm{I}_d)} e^{-2\pi i \tfrac{X}{|X|}\cdot \xi} & = \frac{1}{(2\pi)^{d/2}}\int_{\mathbb{R}^d}  e^{-2\pi i \tfrac{x}{|x|}\cdot \xi} e^{-\tfrac{|x|^2}{2}}\dd x \\
& = \frac{1}{(2\pi)^{d/2}}\int_0^\infty \bigg( \int_{\mathbb{S}^{d-1}}  e^{-2\pi i\omega \cdot \xi} \dd\sigma(\omega)\bigg) e^{-\tfrac{r^2}{2}} r^{d-1}\dd r  \\
&= \sigma(\mathbb{S}^{d-1}) \mu(\xi) \frac{1}{(2\pi)^{d/2}}\int_0^\infty e^{-\tfrac{r^2}{2}} r^{d-1}\dd r \\
&= \mu(\xi) \frac{1}{(2\pi)^{d/2}}\int_{\mathbb{R}^d} e^{-\tfrac{|x|^2}{2}}\dd x \\
&= \mu(\xi)~.
\end{align*}
\end{proof}

We have now all the tools to proceed with the proof of Proposition \ref{pro: g-mu, m-mu}.

\begin{proof}[Proof of Proposition \ref{pro: g-mu, m-mu}.]
We start with the the proof of \eqref{eq: m-mu}. Note that
 \[
\mu(\xi)-m(\xi)=\int_0^1 \bigg( \frac{\dd}{\dd s} \mu(s\xi) \bigg)s^d \dd s=\int_0^1 \langle s\xi, (\nabla \mu)(s\xi)\rangle s^{d-1} \dd s.
 \]
 Thus, using the third inequality in Proposition \ref{propo_estimate_for_mu} we reach
 \[
 |\mu(\xi)-m(\xi)|\lesssim \int_0^1 s^{d-1}\,ds \lesssim d^{-1}
 \]
which is \eqref{eq: m-mu}. 

We now move on the the proof of \eqref{eq: g-mu}. Here the analysis is split into two cases. First we assume that $|\xi|\geq d^{1/2+\varepsilon}$ for some $\varepsilon>0$ to be chosen later. In view of the second inequality in \eqref{propo_estimate_for_mu}  we have 
\begin{equation}
\label{eq: mu-g large xi}
|\mu(\xi)-g(\xi)|\leq |\mu(\xi)|+|g(\xi)|\lesssim \frac{d^{1/2}}{|\xi|}\leq d^{-\varepsilon}~.\end{equation}

Next, we consider the complementary case $|\xi| < d^{\tfrac{1}{2}+\varepsilon}$. In view of Lemma \ref{lem: identity_for_mu,g}, the following identity holds 
\begin{align*}
    |\mu(\xi)-g(\xi)|= \bigg|\mathop{\mathbb{E}}_{X\sim \mathcal{N}(0,\textrm{I}_d)}\bigg(e^{-2\pi i \tfrac{X}{|X|}\cdot \xi} - e^{- 2\pi i \tfrac{X}{\sqrt{d}}\cdot \xi} \bigg) \bigg|~.
\end{align*}
We further split the expectation above over the two complementary events $||X|^2-d|\leq 4d^{1/2+\alpha}$ and $||X|^2-d| > 4d^{1/2+\alpha}$, for some $\alpha\in(0,1/2)$ to be chosen later. To deal with the second case we rely on the following concentration inequality 
\begin{equation}
\label{eq: concentration}
\mathbb{P}(|X|^2\in [d-2d^{1/2+\alpha}, d+ 2d^{1/2+\alpha} + 2d^{\alpha}])\geq 1- e^{-d^\alpha}.\end{equation}
This inequality is a consequence of Lemma 1 and the comment following it in \cite{LaMa} with $a_i=1,$ $i=1,\ldots,D,$ and $x=d^{\alpha}$. Using \eqref{eq: concentration} we obtain
$$\bigg|\mathop{\mathbb{E}}_{X\sim \mathcal{N}(0,\textrm{I}_d)} \mathbf{1}_{\lbrace ||X|^2-d|\, \geq \, 4d^{1/2+\alpha} \rbrace}\big(e^{-2\pi i \tfrac{X}{|X|}\cdot \xi} - e^{- 2\pi i \tfrac{X}{\sqrt{d}}\cdot \xi} \big) \bigg| \leq 2 e^{-d^\alpha} ~.$$
Hence we are left to bound 
\begin{align*}
    \bigg|\mathop{\mathbb{E}}_{X\sim \mathcal{N}(0,\textrm{I}_d)} & \mathbf{1}_{\lbrace ||X|^2-d|\, \leq \, 4d^{1/2+\alpha} \rbrace}\big(e^{-2\pi i \tfrac{X}{|X|}\cdot \xi} - e^{- 2\pi i \tfrac{X}{\sqrt{d}}\cdot \xi} \big) \bigg| \\
    & \lesssim \mathop{\mathbb{E}}_{X\sim \mathcal{N}(0,\textrm{I}_d)} \mathbf{1}_{\lbrace ||X|^2-d|\, \leq \, 4d^{1/2+\alpha} \rbrace} \big|\tfrac{1}{|X|}-\tfrac{1}{\sqrt{d}}\big| |X\cdot\xi| \\
    &= \mathop{\mathbb{E}}_{X\sim \mathcal{N}(0,\textrm{I}_d)} \mathbf{1}_{\lbrace ||X|^2-d|\, \leq \, 4d^{1/2+\alpha} \rbrace} \tfrac{||X|^2-d|}{|X|^2\sqrt{d}+|X|d} |X\cdot\xi| \\
    & \lesssim \mathop{\mathbb{E}}_{X\sim \mathcal{N}(0,\textrm{I}_d)}\tfrac{d^{1/2+\alpha}}{d^{3/2}} |X\cdot\xi| \\
    & =d^{-1+\alpha}|\xi| \mathop{\mathbb{E}}_{{X_1} \sim \mathcal{N}(0,1)} |{X_1}|\\
    & \lesssim d^{-1/2+\alpha+\varepsilon}
\end{align*}
 where we have used invariance under rotation of $X$ to pass to the second equality and the fact that, by hypothesis, $|\xi| < d^{\tfrac{1}{2}+\varepsilon}$ to pass to the last line. Therefore, we justified that in the case $|\xi|<d^{1/2+\varepsilon}$ it holds
 \begin{equation*}
%\label{eq: mu-g small xi}
|\mu(\xi)-g(\xi)|\lesssim e^{-d^{\alpha}}+d^{-1/2+\alpha+\varepsilon}~.\end{equation*} 
In view of the above inequality and \eqref{eq: mu-g large xi}, choosing $\varepsilon=1/4$ and $\alpha\to 0^+$ we obtain \eqref{eq: g-mu}.

This completes the proof of Proposition \ref{pro: g-mu, m-mu}.
 \end{proof}

 \subsection{Proof of Theorem \ref{thm: limit}.} We justify \eqref{eq: diff Lp} first.
Take $q\in (1,\infty)$ and let $d_0=\lfloor 1+1/(q-1)\rfloor+1.$ Recalling \eqref{eq: dimfree} we see that
\begin{equation}
\label{eq: dim free three}
 \sup_{d \ge d_0 } \|(\mS-\tmG)_*\|_{L^q(\R^d)}<\infty,\qquad  \sup_{d \ge d_0 } \|(\mS-\mB)_*\|_{L^q(\R^d)} < \infty,\qquad \sup_{d\ge 1} \|(\tmG-\mB)_*\|_{L^q(\R^d)}<\infty.
\end{equation}
Thus, Marcinkiewicz interpolation theorem \cite[Theorem 1.3.2]{Gr14}, Theorem \ref{thm_gaussian_sphere} and \eqref{eq: dim free three} imply \eqref{eq: diff Lp}.

In remains to demonstrate \eqref{eq: limno1}. It is well known that for each $p>d/(d-1)$ we have $\|\mS_*\|_{L^p(\R^d)}\le \|\mS_*\|_{L^p(\R^{d-1})},$ see e.g.\ \cite[eq.\ (2.10)]{BMSW21}. In particular, for each $p\in (1,\infty)$ the limit $\lim_{d\to \infty}\|\mS_*\|_{L^p(\R^d)}$ 
exists. Thus, in view of \eqref{eq: diff Lp} the remaining two limits in \eqref{eq: limno1} also exist and are all equal. Finally, \eqref{eq: mG p/(p-1)} implies that these limits do not exceed $p/(p-1)$ while the lower bound in \eqref{eq: lm bounds} is justified by Theorem \ref{propo:GausMonotonicity}.
\qed

\begin{remark}
\label{rem: DG}
    A family of $m-$parameters maximal operators $\mathcal{S}^m_{\alpha,\ast}$, with $0\leq \alpha <1$, connecting the ($m-$parameters) Hardy-Littlewood maximal function with the ($m-$parameters) spherical maximal function has been studied by Dosidis and Grafakos in \cite{DG21}. Here, we focus on the one-parameter case $\mathcal{S}_{\alpha,\ast}=\mathcal{S}^1_{\alpha,\ast}$, 
    $$\mathcal{S}_{\alpha,\ast}(f)(x):=\sup_{t>0} \frac{2}{\omega_{d-1}\mathrm{B}(\frac{d}{2},1-\alpha)}\int_{B}|f(x-ty)|(1-|y|)^{-\alpha}\dd y~,$$
    where $\omega_{d-1}$ is the surface area of the unit sphere in $\mathbb{R}^d$, and $\mathrm{B}(a,b)$ is the beta function $\mathrm{B}(a,b):=\int_0^1 t^{a-1}(1-t)^{b-1}\dd t$. It has been shown in \cite{DG21} that, for any $f\in L^1_{loc}(\mathbb{R}^d)$ and $x\in\mathbb{R}^d$, the pointwise estimate
    $$\mB_\ast f(x)\leq S_{\alpha,\ast}f(x)\leq \mS_\ast f(x)$$
    holds.  In view of this fact and as a consequence of our Theorem \ref{thm: limit}, we immediately obtain that also for this family of maximal operators
    $$\lim_{d\rightarrow\infty} \|\mathcal{S}_{\alpha,\ast}\|_{L^p(\mathbb{R}^d)} = \lm(p)~.$$
    
\end{remark}

\section{Lower bounds for $\lm(p)$ - Proof of Theorem \ref{propo:GausMonotonicity}}\label{sec:lowerBound}
We split the proof of Theorem \ref{propo:GausMonotonicity} into two parts. First, we focus on the monotonicity property for $\|\mG_\ast\|_{L^p(\mathbb{R}^d)}$. 

\begin{proposition}\label{propo2:GausMonotonicity}
    For all $p\in(1,\infty)$ and $d\geq 1$ it holds that
    $$\|\mathcal{G}_\ast\|_{L^p(\mathbb{R}^{d+1})}\geq \| \mathcal{G}_\ast\|_{L^p(\mathbb{R}^d)}~.$$
\end{proposition}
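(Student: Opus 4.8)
The plan is to exploit the tensor-product structure of the heat kernel, namely $\mG_t^{d+1}=\mG_t^{d}\otimes\mG_t^1$, together with a dilation in the extra variable. Since $|\mG_t^d f|\le \mG_t^d|f|$ pointwise, one has $\|\mG_\ast^{d}\|_{L^p(\R^d)}=\sup\{\|\mG_\ast^d f\|_{L^p(\R^d)}:f\ge 0,\ \|f\|_{L^p(\R^d)}=1\}$, so it is enough to show that for every nonnegative $f$ with $\|f\|_{L^p(\R^d)}=1$ and every $\eta>0$ there exists $F\in L^p(\R^{d+1})$ with $\|\mG_\ast^{d+1}F\|_{L^p(\R^{d+1})}\ge(\|\mG_\ast^d f\|_{L^p(\R^d)}-\eta)\|F\|_{L^p(\R^{d+1})}$. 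It will be convenient to work with the truncated maximal function $\mG_{\ast,T}^d f:=\sup_{0<t\le T}\mG_t^d f$; since $\mG_{\ast,T}^d f\uparrow\mG_\ast^d f$ pointwise as $T\to\infty$ and $\mG_\ast^d f\in L^p(\R^d)$ by \eqref{eq: mG p/(p-1)}, monotone convergence allows us to fix $T$ with $\|\mG_{\ast,T}^d f\|_{L^p(\R^d)}\ge\|\mG_\ast^d f\|_{L^p(\R^d)}-\eta$.

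Now fix any nonnegative $h\in C_c(\R)$ with $h\not\equiv 0$ and, for $\lambda>0$, set $F_\lambda(x,y):=f(x)\,h(y/\lambda)$, so that $\|F_\lambda\|_{L^p(\R^{d+1})}^p=\lambda\,\|h\|_{L^p(\R)}^p$. Using $\mG_t^{d+1}=\mG_t^d\otimes\mG_t^1$ and the scaling identity $\mG_t^1[h(\cdot/\lambda)](y)=(\mG_{t/\lambda^2}^1 h)(y/\lambda)$ for the one-dimensional heat kernel, we obtain
\[
\mG_t^{d+1}F_\lambda(x,y)=\mG_t^d f(x)\cdot(\mG_{t/\lambda^2}^1 h)(y/\lambda).
\]
As $h$ is bounded and uniformly continuous and $\{\mG^1_s\}$ is an approximate identity, $\epsilon(\lambda):=\sup_{0<s\le T/\lambda^2}\|\mG_s^1 h-h\|_{L^\infty(\R)}\to 0$ as $\lambda\to\infty$; moreover $\mG_s^1 h\ge 0$, so $(\mG_{t/\lambda^2}^1 h)(y/\lambda)\ge (h(y/\lambda)-\epsilon(\lambda))_+$ for every $t\in(0,T]$. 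Taking the supremum over $t\in(0,T]$ gives the pointwise bound
\[
\mG_\ast^{d+1}F_\lambda(x,y)\ \ge\ \mG_{\ast,T}^d f(x)\,\big(h(y/\lambda)-\epsilon(\lambda)\big)_+ ,
\]
and raising to the $p$-th power and integrating in $(x,y)$ yields $\|\mG_\ast^{d+1}F_\lambda\|_{L^p(\R^{d+1})}^p\ge\lambda\,\|\mG_{\ast,T}^d f\|_{L^p(\R^d)}^p\,\|(h-\epsilon(\lambda))_+\|_{L^p(\R)}^p$.

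Dividing by $\|F_\lambda\|_{L^p(\R^{d+1})}^p=\lambda\,\|h\|_{L^p(\R)}^p$ and using $\|(h-\epsilon)_+-h\|_{L^p(\R)}\le\epsilon\,|\supp h|^{1/p}\to 0$, we let $\lambda\to\infty$ to get $\|\mG_\ast^{d+1}\|_{L^p(\R^{d+1})}^p\ge\|\mG_{\ast,T}^d f\|_{L^p(\R^d)}^p\ge(\|\mG_\ast^d f\|_{L^p(\R^d)}-\eta)^p$; letting $\eta\to 0$ and taking the supremum over admissible $f$ then gives $\|\mG_\ast^{d+1}\|_{L^p(\R^{d+1})}\ge\|\mG_\ast^{d}\|_{L^p(\R^d)}$. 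The only genuine difficulty is that the supremum defining $\mG_\ast^{d+1}$ ranges over \emph{all} scales $t$, which conflicts with freezing the scale in the $y$-variable; this is exactly why the argument proceeds in two steps, first truncating to $t\le T$ (which costs nothing, by monotone convergence) and then sending $\lambda\to\infty$ so that all induced times $t/\lambda^2\le T/\lambda^2$ tend to $0$ and $\mG^1_{t/\lambda^2}$ behaves like the identity uniformly in $t\in(0,T]$. Beyond this, the proof uses only the positivity of the kernel — to reduce to $f\ge 0$ and to keep $\mG^1_s h\ge 0$ — and its tensor-product form, in agreement with Remark \ref{rem: TensMax}.
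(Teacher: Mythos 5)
Your proof is correct. The overall skeleton coincides with the paper's: reduce to nonnegative $f$, truncate the supremum to $0<t\le T$ by monotone convergence, tensorize with an auxiliary one-dimensional function, and show that the extra factor costs nothing in the limit. Where you genuinely differ is in the mechanism for that last step. The paper takes $\varphi_N(s)=|s|^{-1/p}\mathbf{1}_{[1,N]}(|s|)$ and uses the evenness of the kernel together with the convexity of $s\mapsto |s|^{-1/p}$ on each half-line to get the pointwise lower bound $\mG^1_t\varphi_N(s)\ge I(M)\varphi(s)$ on a large range of $s$, then exploits the logarithmic growth of $\|\varphi_N\|_{L^p}$ to absorb the boundary losses as $N\to\infty$. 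You instead take a dilated bump $h(\cdot/\lambda)$ and use the parabolic scaling $\mG^1_t[h(\cdot/\lambda)](y)=(\mG^1_{t/\lambda^2}h)(y/\lambda)$ so that, after the time truncation, all effective times $t/\lambda^2$ tend to $0$ uniformly and the approximate-identity property gives $\mG^1_{t/\lambda^2}h\ge (h-\epsilon(\lambda))_+$ with $\epsilon(\lambda)\to 0$. Your route avoids evenness and convexity entirely and is arguably more elementary, at the price of using the dilation structure of the heat family and its approximate-identity behaviour at small times; the paper's route avoids any limiting behaviour of the semigroup at $t\to 0$ and, as noted in Remark \ref{rem: TensMax}, transfers verbatim to any positive, even, tensor-product kernel (e.g.\ cubes), whereas yours transfers to any positive tensor-product family of dilates forming an approximate identity. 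Both are valid; all the individual steps in your argument (the scaling identity, the uniform lower bound in $t\in(0,T]$, and the estimate $\|(h-\epsilon)_+-h\|_{L^p}\le\epsilon\,|\supp h|^{1/p}$) check out.
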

\proof We start by observing that, by the monotone convergence theorem, it suffices to consider $0<t<T$ for some fixed $T>0$. 
Let $f\in L^p(\mathbb{R}^d)$, $\varphi\in L^1_{loc}$. We have that
\begin{equation*}
%\label{eq:Gaussian_factorizes}
    \mG^{d+1}_t(f\otimes\varphi)(x,s)=\mG_t^df(x)\,\mG_t^1\varphi (s)~,
\end{equation*}
where $(x,s)\in\mathbb{R}^{d+1}$, $x\in\mathbb{R}^d$, $s\in\mathbb{R}$. We set \begin{equation}
\label{eq: phi,phi_N}
\varphi(s)=|s|^{-1/p}\qquad \textrm{and}\qquad  \varphi_N(s):=\varphi(s)\mathbf{1}_{[1,N]}(|s|).
\end{equation}Here $N>2$ is a large parameter which will be taken to infinity in the proof. Note that $\varphi_N, \, \varphi_{\frac{N}{2}}\in L^p(\mathbb{R})$.
Since the kernel of $\mG_t^1$ is even a change of variable gives
\begin{align}
\begin{split}\label{eq:convexity_low_b}
\mG_t^1\varphi_N (s) & = \frac{1}{\sqrt{4\pi t}} \int_{-\infty}^\infty \varphi_N(s-y) e^{-y^2/(4t)}\dd y\\
& = \frac{1}{\sqrt{4\pi }} \int_0^\infty \left( \varphi_N(s-\sqrt{t}y)+\varphi_N(s+\sqrt{t}y)\right) e^{-y^2/4}\dd y\\
& \geq \frac{2}{\sqrt{4\pi }} \int_0^M  \frac{\varphi_N(s-\sqrt{t}y)+\varphi_N(s+\sqrt{t}y)}{2} e^{-y^2/4}\dd y~,
\end{split}
\end{align}
for any $M>0$. At this point, we observe that $\varphi|_{(-\infty,0)}$ and $\varphi|_{(0,\infty)}$ are convex. Moreover, if $|s|>\sqrt{t}y$ then $\text{sign}(s+\sqrt{t}y)=\text{sign}(s-\sqrt{t}y)$. In particular, this is guaranteed for $|s|>10\sqrt{T}M$. Observe also that if $10\sqrt{T}M<|s|<\tfrac{N}{2}$ then both $|s+\sqrt{t}y|$ and $|s-\sqrt{t}y|$ lie between $1$ and $N$. Obviously, here we take $N>20\sqrt{T}M.$ Therefore, in such a range for $s$ using the convexity of $\varphi$ we obtain
$$\frac{\varphi_N(s-\sqrt{t}y)+\varphi_N(s+\sqrt{t}y)}{2}=\frac{\varphi(s-\sqrt{t}y)+\varphi(s+\sqrt{t}y)}{2}\geq \varphi(s)~. $$
Plugging this into \eqref{eq:convexity_low_b} we have that for all $0<t<T$
$$\mG_t^1\varphi_N (s) \geq \varphi(s)\mathbf{1}_{[ 10\sqrt{T}M,N/2]}(|s|) \frac{2}{\sqrt{4\pi}}\int_0^Me^{-y^2/4}\dd y\geq \varphi_{\frac{N}{2}}(s)\mathbf{1}_{[ 10\sqrt{T}M,\infty)}(|s|) I(M)~, $$
where $I(M):= \frac{2}{\sqrt{4\pi}}\int_0^Me^{-y^2/4}\dd y$ satisfies $\lim_{M\rightarrow\infty}I(M)=1$. A simple computation gives
\begin{align*}\|\varphi_N\|_{L^p(\mathbb{R})}&= 2^{1/p}(\log(N))^{1/p}, \\ \|\varphi_{\frac{N}{2}}\mathbf{1}_{(-\infty,-10\sqrt{T}M]\cup[ 10\sqrt{T}M,\infty]}\|_{L^p(\mathbb{R})}&= 2^{1/p}(\log(N)-\log(2)-\log(10\sqrt{T}M))^{1/p}~.\end{align*}
Combining all of these together, we have justified that
\begin{align*}
   & \frac{\|\sup_{0<t<T}\mG_t^{d+1}(f \otimes \varphi_N)\|_{L^p(\mathbb{R}^{d+1})}}{\|f\otimes \varphi_N\|_{L^p(\mathbb{R}^{d+1})}}  = 
    \frac{\|\sup_{0<t<T}\mG_t^{d+1}(f\otimes \varphi_N)\|_{L^p(\mathbb{R}^{d+1})}}{\|f\|_{L^p(\mathbb{R}^d)} \| \varphi_N\|_{L^p(\mathbb{R})}}\\
    & \qquad \geq  \frac{\|\sup_{0<t<T} \mG_t^{d}f\|_{L^p(\mathbb{R}^{d})}}{\|f\|_{L^p(\mathbb{R}^d)} } \left( \frac{I(M)2^{1/p}(\log(N)-\log(2)-\log(10\sqrt{T}M))^{1/p}}{2^{1/p}(\log(N))^{1/p}}\right)~.
\end{align*}
Taking the limit as $N\rightarrow\infty$ we obtain
$$\lim_{N\rightarrow\infty} \frac{\|\sup_{0<t<T}\mG_t^{d+1}(f \otimes \varphi_N)\|_{L^p(\mathbb{R}^{d+1})}}{\|f\otimes \varphi_N\|_{L^p(\mathbb{R}^{d+1})}} \geq \frac{\|\sup_{0<t<T}\mG_t^{d}f\|_{L^p(\mathbb{R}^{d})}}{\|f\|_{L^p(\mathbb{R}^d)} } I(M)~. $$
Finally, we let $M\rightarrow \infty$ and then we conclude by taking the limit for $T\rightarrow\infty$.
\qed 

Next, we compute lower bounds on $\|\mG_\ast\|_{L^p(\mathbb{R})}$ by testing $\mG_\ast$ on two natural examples, a homogeneous radial function and a Gaussian function.

\begin{lemma}
 For all $p\in (1,\infty)$ we have $\| \mathcal{G}_\ast \|_{L^p(\mathbb{R})} \geq \mG_\ast(|x|^{-1/p})(1) $. In particular
    $$\| \mathcal{G}_\ast \|_{L^p(\mathbb{R})} \geq \frac{2^{(p-1)/p}}{\sqrt{2e\pi}} \frac{p}{p-1}~.$$   
\end{lemma}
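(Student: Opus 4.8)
The plan is to prove the two claims separately, using the explicit action of the heat semigroup on the specific test functions. For the first inequality, $\|\mathcal{G}_\ast\|_{L^p(\mathbb{R})}\ge \mathcal{G}_\ast(|x|^{-1/p})(1)$, the key point is that $|x|^{-1/p}$ is homogeneous of degree $-1/p$, which is precisely the critical homogeneity for $L^p(\mathbb{R})$ in the sense that dilating it multiplies the $L^p$ ``norm'' by a scalar even though $|x|^{-1/p}\notin L^p(\mathbb{R})$. I would truncate, setting $f_N(x)=|x|^{-1/p}\mathbf{1}_{[1/N,N]}(|x|)$ so that $f_N\in L^p(\mathbb{R})$ with $\|f_N\|_{L^p}^p = 2\log(N^2)/\ldots$ growing like $\log N$, and then observe that for a fixed $x$ with $|x|$ of moderate size, $\mathcal{G}_\ast f_N(x)\to \mathcal{G}_\ast(|x|^{-1/p})(x)$ as $N\to\infty$ (monotone convergence, since the Gaussian kernel is positive). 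Since $\mathcal{G}_\ast(|x|^{-1/p})$ is itself homogeneous of degree $-1/p$ (by the dilation symmetry $\mathcal{G}_t^1$ together with the scaling of $t$), one gets $\|\mathcal{G}_\ast f_N\|_{L^p(\mathbb{R})}^p \gtrsim \mathcal{G}_\ast(|x|^{-1/p})(1)^p \cdot (\text{something growing like }\log N)$ by restricting the integral to an annulus and using homogeneity; dividing by $\|f_N\|_{L^p}^p\sim \log N$ and letting $N\to\infty$ yields the bound. This is in the same spirit as the argument already given for Proposition \ref{propo2:GausMonotonicity}, just applied in a single coordinate, so I would not belabor it.

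The substantive computation is the second claim, the explicit value $\mathcal{G}_\ast(|x|^{-1/p})(1)=\frac{2^{(p-1)/p}}{\sqrt{2e\pi}}\frac{p}{p-1}$ (or rather a lower bound by this quantity — in fact by homogeneity one should get equality up to checking the supremum is attained). First I would use homogeneity to reduce to computing $\sup_{t>0}\mathcal{G}_t^1(|x|^{-1/p})(1)$. By the scaling $\mathcal{G}_t^1 g(\lambda \cdot)(x) = (\mathcal{G}_{t/\lambda^2}^1 g)(\lambda x)$ applied to $g=|\cdot|^{-1/p}$, one has $\mathcal{G}_t^1(|x|^{-1/p})(1) = t^{-1/(2p)}\mathcal{G}_1^1(|x|^{-1/p})(t^{-1/2})$, so the supremum over $t$ becomes, after the substitution, a supremum over the single scalar variable at which we evaluate a fixed function. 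Concretely, writing $\mathcal{G}_t^1(|x|^{-1/p})(1) = \frac{1}{\sqrt{4\pi t}}\int_{\mathbb{R}} |1-y|^{-1/p}e^{-y^2/(4t)}\,dy$ and substituting $y=\sqrt{4t}\,u$ (then $t=1/(4u_0^2)$ or similar), the quantity to optimize is of the form $c\, t^{-1/(2p)}\int_{\mathbb{R}}|1-\sqrt{4t}\,u|^{-1/p}e^{-u^2}\,du$. The cleanest route, though, is probably the opposite substitution: since $|x|^{-1/p}$ is singular at $0$ and we evaluate at $1$, send $t\to 0$ so the Gaussian concentrates near $y=0$ and $|1-y|^{-1/p}\to 1$ — but that only gives the trivial limit $1$, not the claimed bound with $\frac{p}{p-1}$. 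Instead, the relevant regime is presumably an intermediate $t$, or perhaps $t\to\infty$ where the Gaussian spreads and picks up the mass of $|y|^{-1/p}$ near the origin weighted by $1/\sqrt{t}$; one computes $\mathcal{G}_t^1(|x|^{-1/p})(1)\sim \frac{1}{\sqrt{4\pi t}}\int |y|^{-1/p}e^{-y^2/(4t)}dy = \frac{(4t)^{-1/(2p)}}{\sqrt{4\pi t}}\cdot 2\Gamma(\tfrac{1}{2}-\tfrac{1}{2p})\cdot(\ldots)$ — wait, this needs care. The honest approach: just lower-bound $\mathcal{G}_\ast(|x|^{-1/p})(1)\ge \mathcal{G}_{t_0}^1(|x|^{-1/p})(1)$ for a cleverly chosen $t_0$, drop part of the integral, use convexity/Jensen or the elementary inequality $|1-y|^{-1/p}\ge$ (linear lower bound), and optimize the resulting elementary expression in $t_0$; the constant $\frac{2^{(p-1)/p}}{\sqrt{2e\pi}}$ with the $\sqrt{e}$ strongly suggests the optimization $\sup_{t}t^{-1/(2p)}e^{-c/t}$-type quantity, whose maximum over $t$ introduces exactly a factor $e^{-1/(2p)}$ or $e^{-1/2}$, i.e.\ the $\sqrt{e}$.

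Let me restate the second part more precisely. I would pick the test bound $\mathcal{G}_\ast(|x|^{-1/p})(1)\ge \sup_{t>0}\frac{1}{\sqrt{4\pi t}}\int_{-\infty}^{\infty}|1-y|^{-1/p}e^{-y^2/(4t)}\,dy \ge \sup_{t>0}\frac{1}{\sqrt{4\pi t}}\int_{\{y:\,|1-y|\le 1\}}|1-y|^{-1/p}e^{-y^2/(4t)}\,dy$, then on that region bound $e^{-y^2/(4t)}\ge e^{-(\,\cdot\,)/t}$ by its value at the endpoint $y=2$, giving $\ge \frac{e^{-1/t}}{\sqrt{4\pi t}}\int_0^2 u^{-1/p}\,du = \frac{e^{-1/t}}{\sqrt{4\pi t}}\cdot\frac{2^{(p-1)/p}}{1-1/p} = \frac{e^{-1/t}}{\sqrt{4\pi t}}\cdot\frac{p}{p-1}2^{(p-1)/p}$, and finally maximize $\frac{e^{-1/t}}{\sqrt{t}}$ over $t>0$: the maximum is at $t=2$ with value $\frac{e^{-1/2}}{\sqrt 2}$, producing $\frac{1}{\sqrt{4\pi}}\cdot\frac{1}{\sqrt 2 \sqrt e}\cdot\frac{p}{p-1}2^{(p-1)/p} = \frac{2^{(p-1)/p}}{\sqrt{2e\pi}}\cdot\frac{p}{p-1}$, which is exactly the claimed bound. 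The only obstacle is getting these elementary optimizations and the substitution $u=1-y$ (shifting the singularity to the origin) bookkept correctly; I should double-check the crude bound $e^{-y^2/(4t)}\ge e^{-1/t}$ holds uniformly on $|1-y|\le 1$ — it requires $y^2\le 4$ on that set, which is true since $|1-y|\le 1 \Rightarrow 0\le y\le 2 \Rightarrow y^2\le 4$, so $y^2/(4t)\le 1/t$ and the inequality is valid. Thus the whole second bound is a short honest computation and the first is the truncation-and-homogeneity argument; the genuinely delicate point is just making sure the homogeneity of $\mathcal{G}_\ast(|x|^{-1/p})$ is invoked correctly given that this function is not in $L^p$, which is handled exactly as in the proof of Proposition \ref{propo2:GausMonotonicity} via truncation.
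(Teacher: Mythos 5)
Your first part (truncation of $|x|^{-1/p}$ plus homogeneity of $\mG_\ast(|x|^{-1/p})$) is exactly the paper's argument, which likewise just invokes the standard approximation from \cite{DSS}. The problem is in the second part: the constant you arrive at is not actually produced by the computation you describe. You restrict to the region $\{y:|1-y|\le 1\}=[0,2]$ and bound the Gaussian there by its endpoint value $e^{-1/t}$ (using $y^2\le 4$). But on that region the integral is
\[
\int_0^2|1-y|^{-1/p}\,\dd y=\int_{-1}^{1}|u|^{-1/p}\,\dd u=\frac{2p}{p-1},
\]
not $\int_0^2u^{-1/p}\,\dd u=\frac{p}{p-1}2^{(p-1)/p}$ as you wrote; and your final identity $\frac{1}{\sqrt{4\pi}}\cdot\frac{1}{\sqrt{2}\sqrt{e}}=\frac{1}{\sqrt{2e\pi}}$ is off by a factor of $2$ (the left side equals $\frac{1}{2\sqrt{2e\pi}}$). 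These two slips partially cancel, and carrying out your route correctly gives
\[
\sup_{t>0}\frac{e^{-1/t}}{\sqrt{4\pi t}}\cdot\frac{2p}{p-1}=\frac{1}{\sqrt{4\pi}}\cdot\frac{e^{-1/2}}{\sqrt{2}}\cdot\frac{2p}{p-1}=\frac{1}{\sqrt{2\pi e}}\,\frac{p}{p-1},
\]
which is strictly smaller than the claimed $\frac{2^{(p-1)/p}}{\sqrt{2e\pi}}\frac{p}{p-1}$ since $2^{(p-1)/p}>1$. So as written your argument does not establish the stated bound.

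The fix is to choose the region the way the paper does: restrict to $y\in[-1,1]$ rather than to $\{|1-y|\le 1\}$. There $y^2\le 1$, so $e^{-y^2/(4t)}\ge e^{-1/(4t)}$, while $\int_{-1}^{1}|1-y|^{-1/p}\,\dd y=\int_0^2u^{-1/p}\,\dd u=\frac{p}{p-1}2^{(p-1)/p}$ is genuinely the quantity you wanted. Maximizing $\frac{e^{-1/(4t)}}{\sqrt{4\pi t}}$ (optimum at $t=1/2$, value $\frac{1}{\sqrt{2\pi e}}$) then yields exactly $\frac{2^{(p-1)/p}}{\sqrt{2e\pi}}\frac{p}{p-1}$. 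The moral is that the gain from keeping the Gaussian weight large (interval centered at $y=0$, where the kernel sits) outweighs the gain from symmetrizing the integral around the singularity at $y=1$.
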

\proof It is easy to see that for $\varphi$ given by \eqref{eq: phi,phi_N} we have
$\mG_\ast(\varphi)(x)=\mG_\ast(\varphi)(1)|x|^{-1/p}$, $x\in \R.$ By standard approximation arguments (see e.g. \cite[Proof~of~Thm.~3.2]{DSS}), one also gets that $$\lim_{N\rightarrow\infty}\|\mG_\ast \varphi_N\|_{L^p(\mathbb{R})}/\|\varphi_N\|_{L^p(\mathbb{R})}\geq  \mG_\ast(|x|^{-1/p})(1)~.$$ 
A simple lower bound for $ \mG_\ast(|x|^{-1/p})(1)$ is 
\begin{align*}
 \mG_\ast(|x|^{-1/p})(1)&= \sup_{t>0}\frac{1}{\sqrt{4\pi t}}\int_{-\infty}^\infty |1-y|^{-1/p}e^{-y^2/(4t)}\dd y \\
    &\geq \sup_{t>0}\frac{1}{\sqrt{4\pi t}} e^{-1/(4t)}\int_{-1}^1 |1-y|^{-1/p} \dd y\\
    & \geq \frac{1}{\sqrt{2\pi e}} \frac{p}{p-1} 2^{(p-1)/p}~.
\end{align*}
\qed 

A better explicit lower bound is provided by the next example.

\begin{lemma}
     For all $p\in (1,\infty)$ we have
    $$\| \mathcal{G}_\ast \|_{L^p(\mathbb{R})} \geq  \left(\frac{2p}{\pi e^p}\right)^{1/(2p)} \left(\frac{p}{p-1}\right)^{1/p}~.$$   
\end{lemma}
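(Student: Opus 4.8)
The plan is to test the maximal operator $\mG_*$ on a single Gaussian and optimize the resulting ratio. Concretely, take $f(x)=e^{-x^2/4}$, which up to the constant factor $\sqrt{4\pi}$ is the heat kernel $p_1$ at time $1$. By the semigroup identity $\mG_t p_1=p_{1+t}$ one obtains the closed form
\[
\mG_t f(x)=\frac{1}{\sqrt{1+t}}\,e^{-x^2/(4(1+t))},\qquad t>0.
\]
The first step is to compute $\mG_* f(x)=\sup_{t>0}\mG_t f(x)$ explicitly. Writing $u=1+t>1$, the function $u\mapsto u^{-1/2}e^{-x^2/(4u)}$ is increasing on $(0,x^2/2)$ and decreasing on $(x^2/2,\infty)$, so the supremum over $u>1$ is attained at $u=1$ when $|x|\le\sqrt2$ and at $u=x^2/2$ when $|x|>\sqrt2$. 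This gives
\[
\mG_* f(x)=
\begin{cases}
e^{-x^2/4}, & |x|\le\sqrt2,\\
\dfrac{\sqrt2}{|x|}\,e^{-1/2}, & |x|>\sqrt2,
\end{cases}
\]
the two branches agreeing at $|x|=\sqrt2$ (both equal $e^{-1/2}$).

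Next I would evaluate the $L^p$ norms. On the one hand $\|f\|_{L^p(\mathbb{R})}^p=\int_{\mathbb{R}}e^{-px^2/4}\dd x=2\sqrt{\pi/p}$. On the other hand, splitting $\int_{\mathbb{R}}(\mG_* f)^p$ at $|x|=\sqrt2$, the tail part is computed exactly,
\[
2\int_{\sqrt2}^{\infty}\Bigl(\frac{\sqrt2}{x}\,e^{-1/2}\Bigr)^{p}\dd x=\frac{2\sqrt2\,e^{-p/2}}{p-1},
\]
while for the inner part one uses the crude bound $e^{-px^2/4}\ge e^{-p/2}$ valid on $\{|x|\le\sqrt2\}$ to get $\int_{|x|\le\sqrt2}e^{-px^2/4}\dd x\ge 2\sqrt2\,e^{-p/2}$. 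Adding the two contributions, $\|\mG_* f\|_{L^p}^p\ge 2\sqrt2\,e^{-p/2}\,\frac{p}{p-1}$, hence
\[
\frac{\|\mG_* f\|_{L^p}^p}{\|f\|_{L^p}^p}\ \ge\ \frac{2\sqrt2\,e^{-p/2}\,\frac{p}{p-1}}{2\sqrt{\pi/p}}=\sqrt{\frac{2p}{\pi e^{p}}}\;\frac{p}{p-1}.
\]
Taking $p$-th roots and using $\|\mG_*\|_{L^p(\mathbb{R})}\ge\|\mG_* f\|_{L^p}/\|f\|_{L^p}$ yields exactly the claimed inequality.

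There is no genuine obstacle here: the argument is a one-variable explicit computation. The only points needing care are the optimization over $t$ (correctly locating where the supremum is attained, which produces the two regimes for $\mG_* f$) and tracking the powers of $2$ and $e$ in the tail integral. One could equally run this with $f(x)=e^{-x^2/(4s)}$ for arbitrary $s>0$; scaling invariance of the ratio shows $s$ is immaterial, which is why fixing $s=1$ loses nothing.
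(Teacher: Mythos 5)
Your proposal is correct and follows essentially the same route as the paper: testing $\mG_*$ on the Gaussian $e^{-x^2/4}$, computing the supremum explicitly in the two regimes $|x|\le\sqrt2$ and $|x|>\sqrt2$, bounding the inner integral by its value at $|x|=\sqrt2$, and evaluating the tail exactly. The only difference is the (immaterial) normalization of the test function.
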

\proof 
Set $\gamma(x)=\frac{1}{\sqrt{4\pi }}e^{-x^2/4}.$ Then
$$\sup_{t>0}\mG_t(\gamma)(x)=\sup_{t>0} \frac{1}{\sqrt{4\pi(1+t)}}e^{-x^2/(4(1+t))}~.$$
We set $u:=1+t$ and the condition $t>0$ translates into $u>1$. We define $h(u):=\frac{1}{\sqrt{4\pi u}}e^{-x^2/(4u)}$. Then we have $h'(u)=h(u)(-\frac{1}{2u}+\frac{x^2}{4u^2})$, $h'(\frac{x^2}{2})=0,$ and $h''(\frac{x^2}{2})<0$. Note that $\frac{x^2}{2}>1$ for $|x|>\sqrt{2}$. Therefore, for $|x|>\sqrt{2}$ it holds
$$\sup_{u>1} h(u)= \frac{1}{\sqrt{2\pi |x|^2}}e^{-1/2}~,$$
while for $|x|\leq \sqrt{2}$ we have
$$\sup_{u>1} h(u)= \frac{1}{\sqrt{4\pi }}e^{-x^2/4}~.$$
Using these facts, we obtain that
\begin{align*}
\|\mG_*(\gamma)\|_{L^p(\mathbb{R})}^p &= \frac{1}{(4\pi )^{p/2}}\int_{-\sqrt{2}}^{\sqrt{2}} e^{-x^2p/4}\dd x + \frac{2}{(2\pi e)^{p/2}}\int_{\sqrt{2}}^\infty x^{-p}\dd x\\
& \geq \frac{2\sqrt{2}}{(4\pi  e)^{p/2}} \left( 1 + \frac{1}{p-1}\right)~.
\end{align*}
Combining with $\|\gamma\|_{L^p(\mathbb{R})}^p = \frac{1}{(4\pi )^{p/2}}\sqrt{\frac{4\pi }{p}}$, leads to
$$\|\mG_\ast\|_{L^p(\mathbb{R})}^p \geq \sqrt{\frac{2p}{\pi e^p}}\left( \frac{p}{p-1} \right)~.$$
\qed

 For $p\in(1,\infty)$ we set \[c_\sharp(p):=\left(\frac{2p}{\pi e^p}\right)^{1/(2p)} \left(\frac{p}{p-1}\right)^{1/p} \frac{p-1}{p}\]
and let
\[
c=\inf_{p\in (1,\infty)}c_\sharp(p).
\]
It follows from the previous lemma that 
$$\|\mG_\ast\|_{L^p(\mathbb{R})}\geq \max \left\lbrace \ c \frac{p}{p-1} , 1 \right\rbrace~.$$
This, combined with Proposition \ref{propo2:GausMonotonicity}, concludes the proof of Theorem \ref{propo:GausMonotonicity} apart from the lower bound $c>0.4$ which is the content of the lemma below. 
\begin{lemma}
    \label{lem: c>0.4}
The constant $c$ is larger than $0.4.$
\end{lemma}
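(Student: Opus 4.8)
The plan is to prove the purely numerical statement that
\[
c=\inf_{p\in(1,\infty)}c_\sharp(p)>0.4,\qquad c_\sharp(p)=\left(\frac{2p}{\pi e^p}\right)^{1/(2p)}\left(\frac{p}{p-1}\right)^{1/p}\frac{p-1}{p},
\]
by direct analysis of the single-variable function $c_\sharp$. First I would simplify $c_\sharp(p)$ by writing it as $c_\sharp(p)=\big(\tfrac{2p}{\pi e^p}\big)^{1/(2p)}\big(\tfrac{p-1}{p}\big)^{1-1/p}$, and then take logarithms: setting $\phi(p):=\log c_\sharp(p)$ we get
\[
\phi(p)=\frac{1}{2p}\Big(\log\frac{2p}{\pi}-p\Big)+\Big(1-\frac1p\Big)\log\frac{p-1}{p}
=\frac{1}{2p}\log\frac{2p}{\pi e^p}+\frac{p-1}{p}\log\frac{p-1}{p}.
\]
So it suffices to show $\phi(p)>\log(0.4)=-\log(2.5)\approx-0.9163$ for all $p\in(1,\infty)$.

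Next I would analyze the two asymptotic regimes. As $p\to1^+$, the term $\tfrac{p-1}{p}\log\tfrac{p-1}{p}\to0$ (since $x\log x\to0$ as $x\to0^+$) and $\tfrac{1}{2p}\log\tfrac{2p}{\pi e^p}\to\tfrac12\log\tfrac{2}{\pi e}=\tfrac12(\log2-\log\pi-1)\approx\tfrac12(-1.605)\approx-0.802$, so $c_\sharp(p)\to e^{-0.802}\approx0.448>0.4$. As $p\to\infty$, $\tfrac{1}{2p}\log\tfrac{2p}{\pi e^p}=\tfrac{1}{2p}\log\tfrac{2p}{\pi}-\tfrac12\to-\tfrac12$ and $\tfrac{p-1}{p}\log\tfrac{p-1}{p}\to1\cdot\log1=0$, so again $c_\sharp(p)\to e^{-1/2}\approx0.6065>0.4$. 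Thus the infimum is attained at some finite $p>1$ (or is one of these limiting values), and I must rule out a dip below $0.4$ on any compact subinterval. The cleanest route is to show $\phi$ has a manageable derivative: compute
\[
\phi'(p)=\frac{d}{dp}\!\left[\frac{1}{2p}\log\frac{2p}{\pi}-\frac12\right]+\frac{d}{dp}\!\left[\frac{p-1}{p}\log\frac{p-1}{p}\right]
=\frac{1}{2p^2}\Big(1-\log\frac{2p}{\pi}\Big)+\frac{1}{p^2}\log\frac{p-1}{p}+\frac{1}{p^2},
\]
multiply through by $2p^2>0$, and study the sign of $g(p):=1-\log\tfrac{2p}{\pi}+2\log\tfrac{p-1}{p}+2=3-\log\tfrac{2p}{\pi}+2\log\tfrac{p-1}{p}$. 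Since $g$ is strictly decreasing on $(1,\infty)$ (both $-\log p$ and $\log\tfrac{p-1}{p}$ decrease... actually $\log\tfrac{p-1}{p}$ increases, so I would instead verify monotonicity of $g$ directly: $g'(p)=-\tfrac1p+2(\tfrac1{p-1}-\tfrac1p)=\tfrac{2}{p-1}-\tfrac3p=\tfrac{2p-3(p-1)}{p(p-1)}=\tfrac{3-p}{p(p-1)}$, so $g$ increases on $(1,3)$ and decreases on $(3,\infty)$), one sees $\phi$ has at most finitely many critical points, and a short case analysis pins down the global minimum of $\phi$.

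\textbf{The main obstacle} I anticipate is not conceptual but a careful bookkeeping one: confirming rigorously that the minimum of $c_\sharp$ over the finite range — likely near a moderate value of $p$ where $\phi'=0$ — stays above $0.4$, which boils down to evaluating $\phi$ (or $c_\sharp$) at the critical point(s) and estimating logarithms with enough precision. The most economical writeup is probably to avoid locating the critical point exactly and instead produce a clean lower bound for $c_\sharp(p)$ valid uniformly: e.g. split $(1,\infty)$ into a few intervals (say $(1,2]$, $[2,10]$, $[10,\infty)$), and on each use crude monotonicity of the two summands of $\phi$ together with explicit evaluations at the endpoints to conclude $\phi(p)>-\log(2.5)$ throughout. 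I would check a handful of sample points — e.g. $p=1^+$ giving $\approx0.448$, $p=2$ giving $c_\sharp(2)=\big(\tfrac{4}{\pi e^2}\big)^{1/4}\big(\tfrac12\big)^{1/2}=\big(\tfrac{1}{\pi e^2}\big)^{1/4}\approx(0.0431)^{1/4}\approx0.455$, and the tail limit $e^{-1/2}\approx0.607$ — to confirm the margin is comfortably above $0.4$, and then turn the sample-point picture into a rigorous argument using the sign information for $g$ above.
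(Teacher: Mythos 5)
Your structural analysis is essentially the route the paper takes: the paper first substitutes $x=(p-1)/p$ and works with $h(x)=\phi\big(1/(1-x)\big)+\tfrac12$, but the content is the same --- take logarithms, differentiate, observe that the second derivative changes sign exactly once, so there are exactly two critical points (a local minimum $p_1$ followed by a local maximum), and the infimum equals $\min\{\phi(p_1),\lim_{p\to\infty}\phi(p)\}=\phi(p_1)$. Your formulas for $\phi'$ and for $g'(p)=(3-p)/(p(p-1))$ are correct.

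The gap is in the finishing move, and it is not merely bookkeeping. The infimum $c=c_\sharp(p_1)$ is attained near $p_1\approx 1.25$ (equivalently $x_1\approx 0.199$) and equals approximately $0.4012$ --- a margin of only about $0.3\%$ over $0.4$. Your sample points $p\to1^+$, $p=2$, $p\to\infty$ (giving roughly $0.484$, $0.456$, $0.607$; note your value $0.448$ at $p\to1^+$ rests on the slip $\log 2-\log\pi-1\approx-1.605$ instead of the correct $\approx-1.452$) all miss the critical region and create the false impression that the margin is comfortable. Consequently the proposed fallback of splitting $(1,\infty)$ into a few intervals and using ``crude monotonicity of the two summands plus endpoint evaluations'' cannot close the argument: any rigorous proof must localize $p_1$ to within roughly one percent and evaluate $\phi(p_1)$ to three decimal places, since the target $\log(0.4)\approx-0.9163$ sits just below $\phi(p_1)\approx-0.913$. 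This is precisely what the paper does: it pins down $x_1\in(0.198,0.2)$ from the critical-point equation $x\sqrt{1-x}=e^{-3/2}\sqrt{2/\pi}$ and then uses monotonicity of $x\log x$ on $(0,0.3)$ together with monotone bounds on the other summand to get $h(x_1)>-0.415>\log(0.4)+\tfrac12\approx-0.4163$. You would need to supply an equally sharp estimate at the critical point to complete your version of the proof.
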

\proof[Proof (sketch)]
The proof is standard, however, we provide details for the convenience of the reader. Denoting $x=(p-1)/p,$ $x\in (0,1)$ our goal is to show that
\[\inf_{x\in (0,1)} \bigg(\frac{2}{\pi(1-x)}\bigg)^{(1-x)/2}x^x>0.4\sqrt{e}.\]
This will follow if we show that the infimum $h_{\rm inf}$ of the function 
\[h(x)=\frac{(1-x)}{2}\big(\log (2/\pi)-\log(1-x))+x\log x,\qquad x\in (0,1)\]
is larger than $\log(0.4)+1/2.$ Since
\[h'(x)=\frac{1}{2}\log(1-x)+\log x+\frac{3}{2}-\frac{1}{2}\log(2/\pi)\]
we see that $h'(x)=0$ if and only if  $x\sqrt{1-x}=e^{-3/2}(2/\pi)^{1/2}$ which has two solutions $0<x_1<x_2<1.$ Furthermore $h''(x)=(1-3x/2)/(x(1-x))$ is positive for $x\in(0,2/3)$ and negative for $x\in (2/3,1)$ and thus $h$ achieves its infimum in $x_1,$ i.e.\ $h_{\rm inf}=h(x_1).$ One may also verify that $x_1\in (0.198,0.2)$ and thus using the fact that $x \log x$ decreases on $(0,0.3)$ we have 
\begin{align*}
h_{\rm inf}&= h(x_1)>\frac{(1-0.2)}{2}\big(\log (2/\pi)-\log(1-0.198))+0.2\log 0.2\\
& > -0.415>\log(0.4)+1/2
\end{align*}
as desired.
\qed

\section{ Concluding remarks - radial functions}\label{sec:ConcludingRemarks}

We conclude by discussing three natural examples previously considered in the literature. Namely, we consider as input a homogeneous radial function, the characteristic function of the unit ball, and a Gaussian function, and we study the behavior, as $d\rightarrow\infty$, of the norm on $L^p(\mathbb{R}^d)$ of the maximal functions applied to these functions. These three examples seem to suggest that the limit $\lm(p)$ may be equal to one when restricting to radial inputs only which we leave as Question \eqref{eq: limrad 1}.

Our first example is the function \[\varphi_d(x)=|x|^{-d/p},\qquad x\in \R^d\setminus\{0\}.\] This function is of interest because it is an eigenfunction of $\mathcal{B}_\ast$ (as well as of $\mG_\ast$ and $\mS_\ast$). In fact, $\mathcal{B}_\ast \varphi_d (x)= b_{p,d} \varphi_d(x)$ with $b_{p,d}:=\mathcal{B}_\ast(|x|^{-d/p})(e_1)$ and $e_1:=(1,0,...,0)$. It has been shown in \cite[Eq.~3.1]{CG1} and  \cite[Theorem~3.2]{DSS} that  $\| \mathcal{B}_\ast \|_{L^p(\mathbb{R}^d)} \geq b_{p,d}$. Moreover, it has been shown in \cite[Lemma~3.4]{DSS} that $b_{p,d}>1$ whenever $p<\tfrac{d}{d-2}$ while it has been suggested in \cite[p.~478]{DSS} that $b_{p,d}$ may be identically one whenever $p\geq \tfrac{d}{d-2}$. In our analysis we will focus on $s_{p,d}:=\mS_*(|x|^{-d/p})(e_1),$ which is clearly larger than or equal to $b_{p,d}.$ Integration in polar coordinates shows that
\[
s_{p,d}=\sup_{r>0}
\frac{\omega_{d-2}}{\omega_{d-1}} \int_{-1}^{1} (1 - 2r x + r^2)^{-\frac{d}{2p}} (1 - x^2)^{\frac{d - 3}{2}} \, dx ~,
\]
where the symbol $\omega_{d-1}$ above denotes the surface area of the unit sphere in $\R^d.$
Let $p\in (1,\infty)$ be fixed and denote by $d_\sharp(p)$ the smallest $d\in\mathbb{N}$ for which $p\geq \frac{d}{d-2}$ holds.
\begin{lemma}
    Let $p\in (1,\infty)$ be fixed and take $d\geq \max \lbrace 3, d_\sharp(p) \rbrace.$ Then $s_{p,d}=1$.
\end{lemma}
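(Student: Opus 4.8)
The plan is to recognize $s_{p,d}$ as $\mS_\ast\varphi_d(e_1)$ for the homogeneous function $\varphi_d(x)=|x|^{-d/p}$ and to use that, precisely in the regime $p\ge d/(d-2)$, this function is superharmonic on $\R^d$, so that its spherical averages about $e_1$ never exceed $\varphi_d(e_1)=1$. The lower bound $s_{p,d}\ge 1$ is immediate: plugging $r=0$ into the integral formula for $s_{p,d}$ leaves $\frac{\omega_{d-2}}{\omega_{d-1}}\int_{-1}^1(1-x^2)^{(d-3)/2}\,dx$, which equals $1$ because it is just the total mass of the normalized surface measure on $\mathbb{S}^{d-1}$ written in the coordinate $x=\omega_1$. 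Hence the whole content of the lemma is the reverse inequality $\mS_r\varphi_d(e_1)\le 1$ for every $r>0$.

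For the upper bound I would start from the elementary identity $\Delta\bigl(|x|^{-\alpha}\bigr)=\alpha(\alpha-d+2)\,|x|^{-\alpha-2}$ on $\R^d\setminus\{0\}$. Taking $\alpha=d/p$, the hypothesis $d\ge d_\sharp(p)$, i.e.\ $p\ge d/(d-2)$, is exactly $\alpha\le d-2$, so $\Delta\varphi_d\le 0$ pointwise away from the origin. Since in addition $\varphi_d$ is lower semicontinuous, is locally integrable (because $\alpha=d/p<d$), and has distributional Laplacian $\le 0$ on all of $\R^d$ --- there is no positive mass at the origin, and in the endpoint case $\alpha=d-2$ one only picks up a negative multiple of $\delta_0$ --- the function $\varphi_d$ is superharmonic on $\R^d$. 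The spherical super-mean-value property then gives, for every $r>0$,
\[
\mS_r\varphi_d(e_1)=\frac{1}{\sigma(\mathbb{S}^{d-1})}\int_{\mathbb{S}^{d-1}}\varphi_d(e_1-r\omega)\,d\sigma(\omega)\ \le\ \varphi_d(e_1)=1 ,
\]
whence $s_{p,d}=\sup_{r>0}\mS_r\varphi_d(e_1)\le 1$, and together with the lower bound $s_{p,d}=1$. If one prefers a self-contained argument, the same conclusion follows by showing $r\mapsto \mS_r\varphi_d(e_1)$ is non-increasing: differentiating under the integral sign and applying the divergence theorem on $B(e_1,r)$ yields $\frac{d}{dr}\mS_r\varphi_d(e_1)=\frac{1}{\sigma(\mathbb{S}^{d-1})\,r^{d-1}}\int_{B(e_1,r)}\Delta\varphi_d\le 0$, so the supremum is attained in the limit $r\to 0^+$, where $\mS_r\varphi_d(e_1)\to\varphi_d(e_1)=1$ by continuity of $\varphi_d$ near $e_1$.

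I expect the only real subtlety --- and the main point that must be handled with care --- to be the origin when $r\ge 1$, i.e.\ when the sphere $\{|y-e_1|=r\}$ and the ball $B(e_1,r)$ contain $0$, where $\varphi_d$ blows up. The surface integral $\mS_r\varphi_d(e_1)$ is nonetheless finite for every $r$, since $d/p<d-1$ when $p>d/(d-2)$; but to justify the super-mean-value inequality (or the divergence-theorem identity) across the singularity one should either invoke the standard fact that a nonnegative superharmonic function on $\R^d$ satisfies the super-mean-value inequality over all spheres, or excise a small ball $B(0,\varepsilon)$ and check that the extra boundary flux through $\partial B(0,\varepsilon)$ is of order $\varepsilon^{\,d-2-\alpha}$, hence negligible or nonpositive as $\varepsilon\to 0^+$ --- this is exactly where $\alpha\le d-2$ is used, the flux vanishing when $\alpha<d-2$ and converging to a strictly negative constant (the contribution of a $-c_d\,\delta_0$ term) when $\alpha=d-2$. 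In the self-contained variant one additionally needs a routine dominated-convergence argument, using the bound $|e_1+r\omega|^2=(1-r)^2+r\,|\omega+e_1|^2$, to see that $\mS_r\varphi_d(e_1)$ is continuous across $r=1$, so that monotonicity on $(0,1)$ and on $(1,\infty)$ combine to monotonicity on all of $(0,\infty)$. Everything else is routine.
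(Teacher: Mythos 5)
Your proof is correct, and its core idea --- superharmonicity of $\varphi_d(x)=|x|^{-d/p}$ in the regime $p\ge d/(d-2)$ plus the super-mean-value property --- is exactly the paper's. The difference is in how the radii $r\ge 1$ are handled, where the sphere $e_1+r\mathbb{S}^{d-1}$ meets or encloses the singularity at the origin. The paper sidesteps this entirely: it applies superharmonicity only on $\R^d\setminus\{0\}$, hence only for $r<1$, and disposes of $r\ge 1$ by differentiating $g(r)=\int_{-1}^1(1-2rx+r^2)^{-d/(2p)}(1-x^2)^{(d-3)/2}\,\dd x$ and checking directly that $g'(r)<0$ there (together with $g(r)\to 0$ as $r\to\infty$), so the supremum is attained at some $r_p<1$ where the clean super-mean-value inequality applies. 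You instead upgrade to global superharmonicity on all of $\R^d$, noting that $\varphi_d$ is l.s.c., locally integrable, and has nonpositive distributional Laplacian (with the $\alpha=d-2$ endpoint contributing $-c_d\delta_0$), so the super-mean-value inequality holds over every sphere at once; you correctly flag integrability of $\varphi_d$ on spheres through the origin ($d/p\le d-2<d-1$) and the $\varepsilon$-excision needed if one argues via the divergence theorem. Your route is cleaner and avoids the explicit derivative computation, at the cost of invoking (or verifying) the standard potential-theoretic fact about superharmonic functions across a polar singularity; the paper's route stays entirely elementary. Both are complete; your lower bound $s_{p,d}\ge 1$ via $r\to 0^+$ matches the paper's use of $\varphi_d\le \mS_\ast\varphi_d$.
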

\proof
In the considered range for $(p,d)$ the function $|x|^{-d/p}$ is superharmonic on $\mathbb{R}^d\setminus \lbrace 0 \rbrace$ beacuse \[\Delta(|x|^{-d/p})=\tfrac{d}{p}(2-d(p-1)/p)|x|^{-d/p-2}\leq 0.\] It follows from the properties of superharmonic functions that $\mathcal{S}_r(\varphi_d)(e_1)\leq \varphi_d(e_1)=1$ for all $r<1$. Define
$$g(r):=\int_{-1}^1(1-2rx+r^2)^{-\tfrac{d}{2p}}(1-x^2)^{\tfrac{d-3}{2}}\dd x$$
so that $\mathcal{S}_r(|x|^{-d/p})(e_1)=\tfrac{\omega_{d-2}}{\omega_{d-1}}\, g(r)$. We are interested in studying $\sup_{r>0}g(r)$. Observe that, since $\lim_{r\rightarrow\infty}g(r)=0$, the maximum is attained. We compute
$$g'(r)=-\frac{d}{2p}\int_{-1}^1(1-2rx+r^2)^{-\tfrac{d}{2p}-1}(2r-2x)(1-x^2)^{\tfrac{d-3}{2}}\dd x~.$$
We observe that $g'(1)<0$, so $r=1$  is not a maximum. Similarly, $g'(r)<0$ for all $r>1$ and therefore the maximum is not attained in this region. 
Then the maximum must be attained at some $r_p$ with $r_p<1$. It follows from the aforementioned properties of superharmonic functions that $g(r_p)\leq \frac{\omega_{d-1}}{\omega_{d-2}}$~. 
Hence, $\mathcal{S}_\ast(|x|^{-d/p})(e_1)\leq 1$ and, in view of the trivial bound $\varphi_d(x)\leq \mathcal{S}_\ast \varphi_d(x)$, the result in the statement follows.
\qed

As a consequence we have that, for $p\geq \frac{d}{d-2}$, $d\geq 3$, $b_{p,d}=1$ as suggested in \cite{DSS}.  This appears to be in line with \cite[Theorem~1]{K01} which asserts that $\mB_\ast$ has a non-constant fixed point if and only if $p\geq \frac{d}{d-2}$, $d\geq 3$. Moreover, \cite[Remark~1]{K01} observes that $f\in L^1_{loc}(\mathbb{R}^d)$ is a fixed point for $\mB_\ast$ if and only if $f$ is superharmonic.
As $\Vert\mathcal{B}_\ast \Vert_{L^p(\mathbb{R}^d)}>1$ for all fixed $p\in (1,\infty)$ and $d\geq 1$, the previous example is probably not very enlightening for our discussion. 

 Our second example concerns the characteristic function of the unit ball $B$ in $\R^d:$ 
 \[\chi_d(x)=\mathbf{1}_{B}(x),\qquad x\in \R^d.\] It has been observed in \cite[Remark 2.9]{APL} that for fixed $(p,d)$
\begin{equation*}
%\label{eq: mB bel APL}
\frac{\|\mB_{*}\chi_d\|_{L^p(\R^d)}}{\|\chi_d \|_{L^p(\R^d)}}\ge \left(1+\frac{1}{2^{dp}(p-1)}\right)^{1/p} >1~,
\end{equation*}
hence providing the lower bound for $\|\mathcal{B}_\ast\|$ in equation \eqref{eq: mB bel APL}. The next lemma implies that
\[
\lim_{d\to \infty} \frac{\|\mB_{*}\chi_d\|_{L^p(\R^d)}}{\|\chi_d \|_{L^p(\R^d)}}=1.
\]
\begin{lemma}
\label{lem: spher ball}
   For each $p\in(1,\infty)$ we have
    $$\lim_{d\rightarrow\infty}\frac{\|\mathcal{S}_\ast \chi_d\|_{L^p(\R^d)}}{\|\chi_d\|_{L^p(\R^d)}}=1~.$$
\end{lemma}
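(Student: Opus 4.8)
The plan is to reduce the ratio to a single one-variable integral using the rotational symmetry of $\chi_d$, and then to control that integral by concentration of measure on the sphere. The lower bound is immediate: for $|x|<1$ and $0<t<1-|x|$ the sphere $\{y:|y-x|=t\}$ lies inside $B$, so $\mathcal{S}_t\chi_d(x)=1$; hence $\mathcal{S}_\ast\chi_d\ge\chi_d$ pointwise and $\|\mathcal{S}_\ast\chi_d\|_{L^p(\mathbb{R}^d)}\ge\|\chi_d\|_{L^p(\mathbb{R}^d)}$. It remains to bound the ratio from above, i.e.\ to show that the ``exterior'' contribution is negligible compared with $\|\chi_d\|_{L^p}^p=|B|$.

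I would first make $\mathcal{S}_\ast\chi_d$ explicit. By rotational invariance $\mathcal{S}_\ast\chi_d$ is radial; for $x=re_1$, $r=|x|>0$, one has $|re_1-t\omega|^2<1$ iff $\omega_1>c_r(t):=\frac{r^2-1}{2rt}+\frac{t}{2r}$, so $\mathcal{S}_t\chi_d(re_1)=\nu_d(c_r(t))$, where $\nu_d(c)$ is the normalized measure of the cap $\{\omega\in\mathbb{S}^{d-1}:\omega_1>c\}$ (with $\nu_d(c)=1$ for $c\le-1$ and $\nu_d(c)=0$ for $c\ge1$). Since $\nu_d$ is nonincreasing and, for $r>1$, the map $t\mapsto c_r(t)$ has minimum $\sqrt{1-r^{-2}}$ at $t=\sqrt{r^2-1}$ (while $c_r(t)\to-\infty$ as $t\to0^+$ for $r<1$), taking the supremum over $t$ gives, up to a null set, $\mathcal{S}_\ast\chi_d(x)=1$ for $|x|<1$ and $\mathcal{S}_\ast\chi_d(x)=\nu_d(\sqrt{1-|x|^{-2}})$ for $|x|>1$. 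Integrating in polar coordinates and using $|B|=\sigma(\mathbb{S}^{d-1})/d$,
\[
\frac{\|\mathcal{S}_\ast\chi_d\|_{L^p(\mathbb{R}^d)}^p}{\|\chi_d\|_{L^p(\mathbb{R}^d)}^p}=1+J_d,\qquad J_d:=d\int_1^\infty\nu_d\big(\sqrt{1-r^{-2}}\,\big)^p\,r^{d-1}\,dr,
\]
so the lemma is equivalent to $J_d\to0$.

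To estimate $J_d$ I would split at a radius $1+\delta$. On $\{r\ge1+\delta\}$ the cap is uniformly small, and the elementary sub-Gaussian bound $\nu_d(c)\lesssim\sqrt d\,(1-c^2)^{(d-1)/2}/c$ (valid for $c\in(0,1)$, $d\ge2$; it follows from the density $\propto(1-s^2)^{(d-3)/2}$ of $\omega_1$ and Stirling's formula, in the spirit of Section \ref{sec: auxlem}) gives $\nu_d(\sqrt{1-r^{-2}})\lesssim\sqrt d\,r^{-(d-1)}$, since $1-c^2=r^{-2}$; the factor $r^{-p(d-1)}$ then defeats the weight $r^{d-1}$, so this part of $J_d$ is exponentially small in $d$. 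On the remaining shell $1<r<1+\delta$ I would substitute $r=1+v/d$, turning that part of $J_d$ into $\int_0^{\delta d}\nu_d\big(\sqrt{1-(1+v/d)^{-2}}\,\big)^p(1+v/d)^{d-1}\,dv$, and bound the integrand using $(1+v/d)^{d-1}\le e^v$, a Gaussian-type cap tail $\nu_d(c)\le e^{-\kappa dc^2}$ (for some $\kappa>0$), and the fact that $1-(1+v/d)^{-2}$ is bounded below by a constant times $v/d$ on this range. Equivalently, one may use the representation $\nu_d(c)=\mathbb{P}\big(X_1/|X|>c\big)$ for $X\sim\mathcal{N}(0,\mathrm{I}_d)$ together with the concentration inequality for $|X|^2$ from \cite{LaMa} already exploited in Section \ref{sec: proof main}.

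The real obstacle, as I see it, is exactly this boundary shell $\{1<|x|<1+O(1/d)\}$: its Lebesgue volume is comparable to $|B|$, while on its inner part $\mathcal{S}_\ast\chi_d$ is of order one rather than small, so the whole conclusion rests on a \emph{quantitatively sharp} decay estimate for $\nu_d$ on the Gaussian scale -- in effect one needs $\nu_d(c)$ to obey the Gaussian tail bound with essentially no loss in the exponent, uniformly in $v$, so as to beat the $(1+v/d)^{d-1}$ growth after the change of variables. Getting this balance exactly right is the heart of the matter; the surrounding reduction is routine.
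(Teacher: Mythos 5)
Your reduction is correct as far as it goes, and in particular your identification of the optimal radius is right: by AM--GM, $c_r(t)=\frac{r^2-1}{2rt}+\frac{t}{2r}\ge \sqrt{1-r^{-2}}$ with equality at $t=\sqrt{r^2-1}$, so for $|x|=r>1$ one gets $\mathcal{S}_\ast\chi_d(x)=\nu_d\big(\sqrt{1-r^{-2}}\big)$, where $\nu_d(c)$ is the normalized measure of the cap $\{\omega_1>c\}$. Be aware, though, that this does \emph{not} agree with the identity the paper's proof is built on: there the extremal cap is taken to have $\sin\theta=(|x|^2+1)^{-1/2}$ (the configuration in which the sides of length $1$ and $|x|$ are perpendicular at the origin, i.e.\ $t=\sqrt{|x|^2+1}$), whereas your optimization yields the tangency configuration with $\sin\theta=|x|^{-1}$, a strictly larger cap since $\frac{|x|}{\sqrt{|x|^2+1}}>\frac{\sqrt{|x|^2-1}}{|x|}$. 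In particular your formula gives $\mathcal{S}_\ast\chi_d(x)\to \nu_d(0^+)=\tfrac12$ as $|x|\to 1^+$, not an exponentially small quantity; the paper's proof, by contrast, makes the whole exterior contribution exponentially small uniformly in $|x|\ge 1$ precisely because of its smaller cap. As far as I can check, your computation of the supremum is the correct one, and this discrepancy must be resolved before anything else.

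The genuine gap is in the boundary shell, exactly where you locate it, and no sharpening of the cap estimate can close it. After substituting $r=1+v/d$, the integrand of $J_d$ is $\nu_d\big(\sqrt{1-(1+v/d)^{-2}}\big)^p(1+v/d)^{d-1}$; for each fixed $v>0$ we have $d\big(1-(1+v/d)^{-2}\big)\to 2v$ and $(1+v/d)^{d-1}\to e^v$, while $\sqrt{d}\,\omega_1$ converges in distribution to $\mathcal{N}(0,1)$, so the integrand converges pointwise to $\Phi(-\sqrt{2v}\,)^p e^{v}$ with $\Phi$ the standard normal distribution function. By Fatou,
\begin{equation*}
\liminf_{d\to\infty} J_d \;\ge\; \int_0^\infty \Phi(-\sqrt{2v}\,)^p e^{v}\,\dd v,
\end{equation*}
which is a finite \emph{positive} constant (finite because $\Phi(-\sqrt{2v}\,)^p\lesssim v^{-p/2}e^{-pv}$ for large $v$, so the integrand decays like $v^{-p/2}e^{-(p-1)v}$). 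Thus the sharp Gaussian tail bound does beat $e^v$ in the sense of making the shell integral converge, but it does not make it vanish as $d\to\infty$: with your (correct) formula for $\mathcal{S}_\ast\chi_d$ the argument produces $\lim_d \|\mathcal{S}_\ast\chi_d\|_{L^p}^p/\|\chi_d\|_{L^p}^p\ge 1+\int_0^\infty\Phi(-\sqrt{2v})^pe^v\dd v>1$, contradicting the statement you set out to prove. So the proposal cannot be completed as written; the obstruction is not a missing estimate but the fact that, on the scale $|x|-1\sim 1/d$, the maximal function is genuinely of order one on a shell of volume comparable to $|B|$.
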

In the proof of this lemma we will use a formula for the surface area of a spherical cap on a sphere of radius $r.$ Namely, denoting by $\theta\in[0,\pi/2]$  the polar angle between the pole and the edge of the cap one has
\begin{equation}
\label{eq: Adr form}
A_{d,r}^{\text{cap}}(\sin(\theta))=\frac{1}{2}r^{d-1}\frac{2\pi^{d/2}}{\Gamma(\tfrac{d}{2})}I_{\sin^2(\theta)}\left(\frac{d-1}{2},\frac{1}{2}\right)~,\end{equation}
where $I_x(a,b)$ is the regularized incomplete beta function,
$$I_x(a,b):=\frac{\Gamma(a+b)}{\Gamma(a)\Gamma(b)}\int_0^x t^{a-1}(1-t)^{b-1}\dd t~,$$
see for example \cite{L11}.
\proof[Proof of Lemma \ref{lem: spher ball}]
We start with proving that 
\begin{equation}
\label{eq: S on chid}
\mathcal{S}_\ast \chi_d(x)= \chi_d(x)+\frac{1}{2} I_{(|x|^2+1)^{-1}}\left(\frac{d-1}{2},\frac{1}{2}\right)
\mathbf{1}_{\mathbb{R}\setminus B}(x).
\end{equation}
The formula is clear for $x\in B$ thus we may take $x\not \in B.$ Let $|(x+r{\mathbb{S}}^{d-1})\cap B|$ be the area of the spherical cap on the sphere of radius $r$ centered at $x$ which is created by intersecting this sphere with the unit ball. Note that this area is non-zero only for $|x|-1<r<|x|+1$. For such $r$ using \eqref{eq: Adr form} we obtain
\begin{equation}
\label{eq: mSr I}
\mS_r (\chi_d)(x)=\frac{|(x+r{\mathbb{S}}^{d-1})\cap B|}{r^{d-1}\sigma(\mathbb{S}^{d-1})}=\frac{1}{2} I_{\sin^2 \theta}\left(\frac{d-1}{2},\frac{1}{2}\right).
\end{equation}
where $\theta=\theta(r,x)$ is   the polar angle between the pole and the edge of the cap $(x+r{\mathbb{S}}^{d-1})\cap B.$ For $x\not \in B$ we want to maximize $\mS_r (\chi_d)(x)$ under the constraints $|x|-1<r<|x|+1.$ In view of \eqref{eq: mSr I} this boils down to determining the maximal value of $\sin\theta.$  Let $y$ be any point on $(x+r{\mathbb{S}}^{d-1})\cap \mathbb{S}^{d-1}.$ Then, $y,x,0$ form a triangle with side lengths $1,|x|,r$ and the angle $\theta$ between the sides of length $|x|$ and $r.$ Under our constraints $\sin \theta$ is the largest when the sides of length $1$ and $|x|$ are perpendicular in which case $\sin \theta=(|x|^2+1)^{-1/2}.$ This justifies \eqref{eq: S on chid}.

Now, using \eqref{eq: S on chid} we have 
\begin{align}\label{eq:max_on_indicator_fun_bound}
    \frac{\|\mathcal{S}_\ast \chi_d\|_{L^p(\R^d)}^p}{\|\chi_d\|_{L^p(\R^d)}^p}= 1 +\frac{1}{|B|}\int_{|x|\geq 1}\left|\frac{1}{2} I_{(|x|^2+1)^{-1}}\left(\frac{d-1}{2},\frac{1}{2}\right)\right|^p \dd x~.
\end{align}
For $|x|\geq 1$ can estimate the regularized beta function in the last display as
\begin{align*}
    I_{(|x|^2+1)^{-1}}\left(\frac{d-1}{2},\frac{1}{2}\right) & =\frac{\Gamma(\tfrac{d}{2})}{\Gamma(\tfrac{d-1}{2})\sqrt{\pi}} \int_0^{(|x|^2+1)^{-1}}t^{\tfrac{d-3}{2}}(1-t)^{-1/2}\dd t\\
    & \leq \frac{\Gamma(\tfrac{d}{2})}{\Gamma(\tfrac{d-1}{2})\sqrt{\pi}} \frac{1}{\left( 1- \tfrac{1}{|x|^2+1}\right)^{1/2}} \int_0^{(|x|^2+1)^{-1}}t^{\tfrac{d-3}{2}}\dd t \\
    & \leq  \frac{\Gamma(\tfrac{d}{2})\sqrt{2}}{\Gamma(\tfrac{d-1}{2})\sqrt{\pi}} \frac{2}{d-1}(|x|^2+1)^{-\tfrac{d-1}{2}}~.
\end{align*}
Hence, integrating in polar coordinates and using \eqref{eq:max_on_indicator_fun_bound} we get
\begin{align*}
     \frac{\|\mathcal{S}_\ast \chi_d\|_{L^p(\R^d)}^p}{\|\chi_d\|_{L^p(\R^d)}^p} & \leq 1 +\frac{\Gamma(\tfrac{d}{2}+1)}{\pi^{d/2}} \frac{2\pi^{d/2}}{\Gamma(\tfrac{d}{2})} \left( \frac{\Gamma(\tfrac{d}{2})}{\Gamma(\tfrac{d-1}{2})\sqrt{\pi}} \frac{\sqrt{2}}{d-1}\right)^p \int_1^\infty (r^2+1)^{-\tfrac{p}{2}(d-1)} r^{d-1}\dd r\\
     & \leq 1 +\frac{2\Gamma(\tfrac{d}{2}+1) }{\Gamma(\tfrac{d}{2})} \left( \frac{\Gamma(\tfrac{d}{2})}{\Gamma(\tfrac{d-1}{2})\sqrt{\pi}} \frac{\sqrt{2}}{d-1}\right)^p\frac{1}{d(p-1)-p}~.
\end{align*}
Using Stirling's formula \eqref{eq: Stirling} we see that the limit as $d\rightarrow\infty$ of the quantity on the right-hand-side of the last display is equal to one. In view of the trivial bound $\mS_*(\chi_d)\ge \chi_d$ the result follows.
\qed

Our last example covers the case of the Gaussian input function
\[
\gamma_{d}(x)=\frac{1}{(4\pi )^{d/2}}e^{-|x|^2/4},\qquad x\in \R^d.
\]
\begin{lemma}
    For each  $p\in(1,\infty)$ we have
     $$\lim_{d\rightarrow\infty}\frac{\|\mG_\ast \gamma_{d}\|_{L^p(\R^d)}}{\|\gamma_{d}\|_{L^p(\R^d)}}=\lim_{d\rightarrow\infty}\frac{\|\mB_\ast \gamma_{d}\|_{L^p(\R^d)}}{\|\gamma_{d}\|_{L^p(\R^d)}}=\lim_{d\rightarrow\infty}\frac{\|\mS_\ast \gamma_{d}\|_{L^p(\R^d)}}{\|\gamma_{d}\|_{L^p(\R^d)}}=1~.$$
\end{lemma}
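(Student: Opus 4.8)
The plan is to reduce the whole statement to the Gaussian maximal function, for which $\mG_\ast\gamma_d$ admits a closed form. Since $\gamma_d$ is precisely the heat kernel at time $1$, the semigroup property yields $\mG_t\gamma_d(x)=(4\pi(1+t))^{-d/2}e^{-|x|^2/(4(1+t))}$; writing $u=1+t>1$ and maximizing the resulting one--variable function (whose logarithmic derivative in $u$ has the sign of $|x|^2-2du$) gives
\[
\mG_\ast\gamma_d(x)=
\begin{cases}
\gamma_d(x), & |x|\le\sqrt{2d},\\
\Big(\dfrac{d}{2\pi|x|^2}\Big)^{d/2}e^{-d/2}, & |x|>\sqrt{2d},
\end{cases}
\]
which for $d=1$ recovers the computation already performed in Section~\ref{sec:lowerBound}. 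A direct Gaussian integral gives $\|\gamma_d\|_{L^p(\R^d)}^p=(4\pi)^{-dp/2}(4\pi/p)^{d/2}$.

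Next I would split $\|\mG_\ast\gamma_d\|_{L^p(\R^d)}^p$ over the two regions above. On $\{|x|\le\sqrt{2d}\}$ the integrand is $\gamma_d^p$, and after dividing by $\|\gamma_d\|_{L^p(\R^d)}^p$ one recognizes $\int_{|x|\le\sqrt{2d}}\gamma_d^p/\|\gamma_d\|_{L^p(\R^d)}^p=\mathbb{P}(|Y|^2\le pd)$ for $Y\sim\mathcal{N}(0,\textrm{I}_d)$, since $\gamma_d^p/\|\gamma_d\|_{L^p(\R^d)}^p$ is the density of $\mathcal{N}(0,\tfrac{2}{p}\textrm{I}_d)$. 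As $p>1$, one has $pd>d=\mathbb{E}|Y|^2$, so this probability tends to $1$ (by Chebyshev's inequality, or by the concentration bound \eqref{eq: concentration}). On $\{|x|>\sqrt{2d}\}$, passing to polar coordinates produces a radial integral $\int_{\sqrt{2d}}^{\infty} r^{d-1-dp}\,\dd r$ which converges precisely because $p>1$; carrying out the elementary computation and applying Stirling's formula \eqref{eq: Stirling} to $\Gamma(d/2)$ shows that this second contribution, divided by $\|\gamma_d\|_{L^p(\R^d)}^p$, is bounded by $C_p\,d^{-1/2}\big(\sqrt{p}\,e^{(1-p)/2}\big)^{d}$, which decays exponentially because $\log p<p-1$ for $p\ne 1$ gives $\sqrt{p}\,e^{(1-p)/2}<1$. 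Combining the two estimates with the trivial pointwise bound $\mG_\ast\gamma_d\ge\lim_{t\to0^+}\mG_t\gamma_d=\gamma_d$ yields $\|\mG_\ast\gamma_d\|_{L^p(\R^d)}/\|\gamma_d\|_{L^p(\R^d)}\to 1$.

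Finally, for the ball and the sphere I would use the pointwise chain $\gamma_d\le\mG_\ast\gamma_d\le\mB_\ast\gamma_d\le\mS_\ast\gamma_d$ (the first inequality as above, the remaining two from \eqref{eq: point est}) together with $\mG_\ast=\tmG_\ast$ and the triangle inequality $\mS_\ast\gamma_d\le\tmG_\ast\gamma_d+(\mS-\tmG)_\ast\gamma_d$, which give
\[
1\le\frac{\|\mB_\ast\gamma_d\|_{L^p(\R^d)}}{\|\gamma_d\|_{L^p(\R^d)}}\le\frac{\|\mS_\ast\gamma_d\|_{L^p(\R^d)}}{\|\gamma_d\|_{L^p(\R^d)}}\le\frac{\|\mG_\ast\gamma_d\|_{L^p(\R^d)}}{\|\gamma_d\|_{L^p(\R^d)}}+\|(\mS-\tmG)_\ast\|_{L^p(\R^d)}.
\]
By Theorem~\ref{thm: limit} the last term tends to $0$, so letting $d\to\infty$ and invoking the previous paragraph squeezes all three ratios to $1$. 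The only genuine computation is the closed form for $\mG_\ast\gamma_d$ together with the Stirling asymptotics showing the far tail is negligible; that is the step requiring care, while the concentration estimate and the transfer to $\mB_\ast,\mS_\ast$ are routine.
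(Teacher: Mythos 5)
Your proposal is correct and follows essentially the same route as the paper: the same closed form for $\mG_\ast\gamma_d$ obtained from the semigroup property, the same split at $|x|=\sqrt{2d}$ with Stirling's formula showing the tail contribution is exponentially negligible, and the same reduction of the ball and sphere cases to the Gaussian case via \eqref{eq: diff Lp}. The probabilistic reading of the inner region and the explicit decay rate $C_p\,d^{-1/2}(\sqrt{p}\,e^{(1-p)/2})^d$ are harmless refinements of the paper's computation.
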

\proof By \eqref{eq: diff Lp} from Theorem \ref{thm: limit} it suffices to focus on the Gaussian maximal function and prove that
\[
\lim_{d\rightarrow\infty}\frac{\|\mG_\ast \gamma_{d}\|_{L^p(\R^d)}}{\|\gamma_{d}\|_{L^p(\R^d)}}=1.
\]

Since $\gamma_d$ is a Gaussian we have 
$$\sup_{t>0} \mG_t (\gamma_d)(x)=\sup_{t>0}\frac{1}{(4\pi(1+t))^{d/2}}e^{-|x|^2/(4(1+t))}~.$$
We set $u:=1+t$ which translates the condition $t>0$ into $u>1$. Note that the function $h(u):= \frac{1}{(4\pi u)^{d/2}}e^{-|x|^2/(4u)}$
satisfies $h'(u)=h(u)(-\frac{d}{2u}+\frac{|x|^2}{4u}),$ $h'(\frac{|x|^2}{2d})=0$ and $h''(\frac{|x|^2}{2d})\leq 0$. Since $\frac{|x|^2}{2d}>1$ for $|x|>\sqrt{2d}$ we see that
$$\sup_{u>1} h(u)=\left( \frac{d}{2\pi|x|^2}\right)^{d/2}e^{-d/2} \qquad \text{for}\; |x|>\sqrt{2d}~,$$
while for $|x|\leq \sqrt{2d}$ the supremum is attained at the boundary $u=1$, namely
$$\sup_{u>1} h(u)=\frac{1}{(4\pi )^{d/2}}e^{-|x|^2/4}=\gamma_d(x) \qquad \text{for}\; |x|\leq \sqrt{2d}~.$$
Using these facts, we obtain that
\begin{align*}
    \|\sup_{t>0} \mG_t (\gamma_d)\|_{L^p(\mathbb{R}^d)}^p &= \int_{|x|\leq \sqrt{2d}} |\gamma_d(x)|^p\dd x + \left(\frac{d}{2\pi e} \right)^{dp/2}\int_{|x|> \sqrt{2d}} |x|^{-dp}\dd x \\
    & =\int_{|x|\leq \sqrt{2d}} |\gamma_d(x)|^p\dd x  + \left(\frac{d}{2\pi e} \right)^{dp/2} \frac{2\pi^{d/2}}{\Gamma(\frac{d}{2})} \frac{1}{(2d)^{\frac{d}{2}(p-1)}} \frac{1}{d(p-1)}~.
\end{align*}
We compute $\|\gamma_d\|_{L^p(\mathbb{R})^d}^p=\frac{1}{(4\pi )^{dp/2}}(\frac{4\pi }{p})^{d/2}$. Hence, to conclude, we are left to show that, for any fixed $p\in(1,\infty)$, the following limit is equal to zero,
\begin{align*}
    \lim_{d\rightarrow\infty} \;\; (4\pi )^{dp/2} \left(\frac{p}{4\pi }\right)^{d/2}&\left( \left(\frac{d}{2\pi e} \right)^{dp/2} \frac{2\pi^{d/2}}{\Gamma(\frac{d}{2})} \frac{1}{(2d)^{\frac{d}{2}(p-1)}} \frac{1}{d(p-1)} \right)\\
    & = \lim_{d\rightarrow\infty} \left(\frac{1}{e}\right)^{dp/2}\left( \frac{pd}{2}\right)^{d/2}\frac{2}{\Gamma(\frac{d}{2})}\frac{1}{d(p-1)} =: \lim_{d\rightarrow\infty} v(p,d)~.
\end{align*}
We observe that for any fixed $d\geq 1$, $v(p,d)$ is a decreasing function of  $p\in (1,\infty)$. Therefore, for any fixed $p$ and sufficiently small, fixed, $\varepsilon>0$, using Stirling's approximation \eqref{eq: Stirling} we have 
\begin{align*}
   \lim_{d\rightarrow\infty} v(p,d)\leq  \lim_{d\rightarrow\infty}
   \left(\frac{1}{e}\right)^{(1+\varepsilon)d/2}\left( \frac{(1+\varepsilon)d}{2}\right)^{d/2}\frac{2}{\Gamma(\frac{d}{2})}\frac{1}{d\varepsilon} = 0~,
\end{align*}
as claimed.
\qed

\begin{bibsection}
 \begin{biblist}

\bib{APL}{article}{
      author={Aldaz, Jesús M.},
   author={P\'erez L\'azaro, Francisco J.}
     TITLE = {The best constant for the centered maximal operator on radial
              decreasing functions},
   JOURNAL = {Math. Inequal. Appl.},
  %FJOURNAL = {Mathematical Inequalities \& Applications},
    VOLUME = {14},
      YEAR = {2011},
    NUMBER = {1},
     PAGES = {173--179},
%      ISSN = {1331-4343,1848-9966},
      % DOI = {10.7153/mia-14-14},
     %  URL = {https://doi.org/10.7153/mia-14-14},
}
 
\bib{AC94}{article}{
    AUTHOR = {Auscher, Pascal}, 
    AUTHOR = {Carro, María Jesús},
     TITLE = {Transference for radial multipliers and dimension free
              estimates},
   JOURNAL = {Trans. Amer. Math. Soc.},
%  FJOURNAL = {Transactions of the American Mathematical Society},
    VOLUME = {342},
      YEAR = {1994},
    NUMBER = {2},
     PAGES = {575--593},
 %     ISSN = {0002-9947,1088-6850},
 %  MRCLASS = {42B15 (42B25)},
 % MRNUMBER = {1152319},
% MRREVIEWER = {Nakhl\'e\ Asmar},
  %     DOI = {10.2307/2154642},
   %    URL = {https://doi.org/10.2307/2154642},
}

\bib{Bo1}{article}{
   author={Bourgain, Jean},
   title={Averages in the plane over convex curves and maximal operators},
   journal={J. Analyse Math.},
   volume={47},
   date={1986},
   pages={69--85},
   issn={0021-7670},
   % review={\MR{0874045}},
   % doi={10.1007/BF02792533},
}

\bib{B86}{article}{
    AUTHOR = {Bourgain, Jean},
     TITLE = {On high-dimensional maximal functions associated to convex
              bodies},
   JOURNAL = {Amer. J. Math.},
  %FJOURNAL = {American Journal of Mathematics},
    VOLUME = {108},
      YEAR = {1986},
    NUMBER = {6},
     PAGES = {1467--1476},
   %   ISSN = {0002-9327,1080-6377},
   %MRCLASS = {42B25 (52A40)},
 % MRNUMBER = {868898},
  %     DOI = {10.2307/2374532},
   %    URL = {https://doi.org/10.2307/2374532},
}

\bib{BMSW20}{article}{
   author={Bourgain, Jean},
   author={Mirek, Mariusz},
   author={Stein, Elias M.},
   author={Wr\'obel, B\l a\.zej},
   title={On discrete Hardy-Littlewood maximal functions over the balls in
   $\Bbb Z^d$: dimension-free estimates},
   conference={
      title={Geometric aspects of functional analysis. Vol. I},
   },
   book={
      series={Lecture Notes in Math.},
      volume={2256},
      publisher={Springer, Cham},
   },
   isbn={978-3-030-36020-7},
   isbn={978-3-030-36019-1},
   date={2020},
   pages={127--169},
   % review={\MR{4175747}},
   % doi={10.1007/978-3-030-36020-7\_8},
}

\bib{BMSW21}{incollection}{
    AUTHOR = {Bourgain, Jean}, 
    AUTHOR = {Mirek, Mariusz}, 
    AUTHOR = {Stein, Elias M.},
    AUTHOR = {Wr\'obel, B\l a\.zej},
     TITLE = {On the {H}ardy-{L}ittlewood maximal functions in high
              dimensions: continuous and discrete perspective},
 BOOKTITLE = {Geometric aspects of harmonic analysis},
    SERIES = {Springer INdAM Ser.},
    VOLUME = {45},
     PAGES = {107--148},
 PUBLISHER = {Springer, Cham},
      YEAR = {2021},
 %     ISBN = {978-3-030-72057-5; 978-3-030-72058-2},
%   MRCLASS = {42B25},
 % MRNUMBER = {4390223},
  %     DOI = {10.1007/978-3-030-72058-2\_3},
   %    URL = {https://doi.org/10.1007/978-3-030-72058-2_3},
}

\bib{Car1}{article}{
    AUTHOR = {Carbery, Anthony},
     TITLE = {An almost-orthogonality principle with applications to maximal
              functions associated to convex bodies},
   JOURNAL = {Bull. Amer. Math. Soc. (N.S.)},
  FJOURNAL = {American Mathematical Society. Bulletin. New Series},
    VOLUME = {14},
      YEAR = {1986},
    NUMBER = {2},
     PAGES = {269--273},
      %ISSN = {0273-0979,1088-9485},
  % MRCLASS = {42B25 (47A30)},
 % MRNUMBER = {828824},
% MRREVIEWER = {Jos\'e\ L.\ Torrea Hern\'andez},
     %  DOI = {10.1090/S0273-0979-1986-15436-4},
      % URL = {https://doi.org/10.1090/S0273-0979-1986-15436-4},
}

\bib{CG1}{article}{
   author={Christ, Michael},
   author={Grafakos, Loukas},
   title={Best constants for two nonconvolution inequalities},
   journal={Proc. Amer. Math. Soc.},
   volume={123},
   date={1995},
   number={6},
   pages={1687--1693},
  % issn={0002-9939},
  % review={\MR{1239796}},
  % doi={10.2307/2160978},
}

\bib{DGM18}{article}{
    AUTHOR = {Deleaval, Luc},
    AUTHOR = {Gu\'edon, Olivier},
    AUTHOR = {Maurey, Bernard},
     TITLE = {Dimension free bounds for the {H}ardy-{L}ittlewood maximal
              operator associated to convex sets},
   JOURNAL = {Ann. Fac. Sci. Toulouse Math. (6)},
 % FJOURNAL = {Annales de la Facult\'e{} des Sciences de Toulouse. Math\'ematiques. S\'erie 6},
    VOLUME = {27},
      YEAR = {2018},
    NUMBER = {1},
     PAGES = {1--198},
  %    ISSN = {0240-2963,2258-7519},
  % MRCLASS = {42B25 (42B35)},
  %MRNUMBER = {3771542},
%MRREVIEWER = {Paul\ Alton\ Hagelstein},
 %      DOI = {10.5802/afst.1567},
  %     URL = {https://doi.org/10.5802/afst.1567},
}

\bib{DG21}{article}{
    AUTHOR = {Dosidis, Georgios},
    author= {Grafakos, Loukas},
     TITLE = {On families between the {H}ardy-{L}ittlewood and spherical maximal functions},
   JOURNAL = {Ark. Mat.},
  FJOURNAL = {Arkiv f\"or Matematik},
    VOLUME = {59},
      YEAR = {2021},
    NUMBER = {2},
     PAGES = {323--343},
      %ISSN = {0004-2080,1871-2487},
   %MRCLASS = {42B25},
  %MRNUMBER = {4339670},
%MRREVIEWER = {Osane\ Oruetxebarria Fdez. de la Pe\~na},
 %      DOI = {10.4310/arkiv.2021.v59.n2.a4},
  %     URL = {https://doi.org/10.4310/arkiv.2021.v59.n2.a4},
}

\bib{DSS}{article}{
    AUTHOR = {Dror, Ron},
    author= {Ganguli, Suman},
    author= {Strichartz, Robert S.},
     TITLE = {A search for best constants in the {H}ardy-{L}ittlewood
              maximal theorem},
   JOURNAL = {J. Fourier Anal. Appl.},
  FJOURNAL = {The Journal of Fourier Analysis and Applications},
    VOLUME = {2},
      YEAR = {1996},
    NUMBER = {5},
     PAGES = {473--486},
     % ISSN = {1069-5869,1531-5851},
   % MRCLASS = {42B25},
  % MRNUMBER = {1412064},
% MRREVIEWER = {Loukas\ Grafakos},
  %     DOI = {10.1007/s00041-001-4039-y},
   %    URL = {https://doi.org/10.1007/s00041-001-4039-y},
}

\bib{DRdF1}{article}{
    AUTHOR = {Duoandikoetxea, Javier},
    author = {Rubio de Francia, Jos\'e L.},
     TITLE = {Maximal and singular integral operators via {F}ourier
              transform estimates},
   JOURNAL = {Invent. Math.},
  %FJOURNAL = {Inventiones Mathematicae},
    VOLUME = {84},
      YEAR = {1986},
    NUMBER = {3},
     PAGES = {541--561},
    %  ISSN = {0020-9910,1432-1297},
   % MRCLASS = {42B25 (42B20)},
  % MRNUMBER = {837527},
% MRREVIEWER = {Douglas\ Kurtz},
  %     DOI = {10.1007/BF01388746},
   %    URL = {https://doi.org/10.1007/BF01388746},
}

\bib{Gr14}{book}{
    AUTHOR = {Grafakos, Loukas},
     TITLE = {Classical {F}ourier analysis},
    SERIES = {Graduate Texts in Mathematics},
    VOLUME = {249},
   EDITION = {Third},
 PUBLISHER = {Springer, New York},
      YEAR = {2014},
     PAGES = {xviii+638},
  %    ISBN = {978-1-4939-1193-6; 978-1-4939-1194-3},
  % MRCLASS = {42-01 (42Bxx)},
  % MRNUMBER = {3243734},
% MRREVIEWER = {Atanas\ G.\ Stefanov},
  %     DOI = {10.1007/978-1-4939-1194-3},
   %    URL = {https://doi.org/10.1007/978-1-4939-1194-3},
}

\bib{GM-SM1}{article}{
   author={Grafakos, Loukas},
   author={Montgomery-Smith, Stephen},
   author={Motrunich, Olexei},
   title={A sharp estimate for the Hardy-Littlewood maximal function},
   journal={Studia Math.},
   volume={134},
   date={1999},
   number={1},
   pages={57--67},
   % issn={0039-3223},
   % review={\MR{1688215}},
   % doi={10.4064/sm-134-1-57-67},
}

\bib{GM-S1}{article}{
   author={Grafakos, Loukas},
   author={Montgomery-Smith, Stephen},
   title={Best constants for uncentred maximal functions},
   journal={Bull. London Math. Soc.},
   volume={29},
   date={1997},
   number={1},
   pages={60--64},
   % issn={0024-6093},
   % review={\MR{1416408}},
   % doi={10.1112/S0024609396002081},
}

\bib{K01}{article}{
    AUTHOR = {Korry, Soulaymane},
     TITLE = {Fixed points of the {H}ardy-{L}ittlewood maximal operator},
   JOURNAL = {Collect. Math.},
  %FJOURNAL = {Universitat de Barcelona. Collectanea Mathematica},
    VOLUME = {52},
      YEAR = {2001},
    NUMBER = {3},
     PAGES = {289--294},
   %   ISSN = {0010-0757,2038-4815},
   %MRCLASS = {42B25},
  %MRNUMBER = {1885223},
%MRREVIEWER = {Antonia\ Passarelli di Napoli},
}

\bib{Kr14}{article}{
    AUTHOR = {Krasikov, Ilia},
     TITLE = {Approximations for the {B}essel and {A}iry functions with an
              explicit error term},
   JOURNAL = {LMS J. Comput. Math.},
  %FJOURNAL = {LMS Journal of Computation and Mathematics},
    VOLUME = {17},
      YEAR = {2014},
    NUMBER = {1},
     PAGES = {209--225},
 %     ISSN = {1461-1570},
%   MRCLASS = {33C10 (33F05 41A60 65D20)},
 % MRNUMBER = {3230865},
  %     DOI = {10.1112/S1461157013000351},
   %    URL = {https://doi.org/10.1112/S1461157013000351},
}

\bib{KW23}{article}{
    AUTHOR = {Kucharski, Maciej}, 
    AUTHOR = {Wr\'obel, B\l a\.zej},
     TITLE = {A dimension-free estimate on {$L^2$} for the maximal {R}iesz
              transform in terms of the {R}iesz transform},
   JOURNAL = {Math. Ann.},
  FJOURNAL = {Mathematische Annalen},
    VOLUME = {386},
      YEAR = {2023},
    NUMBER = {1-2},
     PAGES = {1017--1039},
     % ISSN = {0025-5831,1432-1807},
   %MRCLASS = {42B25 (42B15 42B20)},
  %MRNUMBER = {4585167},
%MRREVIEWER = {Christoph\ Kriegler},
 %      DOI = {10.1007/s00208-022-02417-5},
  %     URL = {https://doi.org/10.1007/s00208-022-02417-5},
}

\bib{KWZ}{article}{
    AUTHOR = {Kucharski, Maciej}, 
    AUTHOR = {Wr\'obel, B\l a\.zej},
    Author = {Zienkiewicz,  Jacek},
title={Dimension-free {$L^p$} estimates for higher order maximal {Riesz} transforms in terms of the {Riesz} transforms},
url={https://arxiv.org/abs/2305.09279},
    year = {2025},
note= {Analysis and PDE, to appear}
}

\bib{LaMa}{article}{
author = {Béatrice Laurent}, 
author = {Pascal Massart},
title = {{Adaptive estimation of a quadratic functional by model selection}},
volume = {28},
journal = {The Annals of Statistics},
number = {5},
publisher = {Institute of Mathematical Statistics},
pages = {1302 -- 1338},
%keywords = {$l_p$-bodies, adaptive estimation, Besov bodies, efficient estimation, Gaussian sequence model, Model selection, quadratic functionals},
year = {2000},
% doi = {10.1214/aos/1015957395},
% URL = {https://doi.org/10.1214/aos/1015957395}
}

\bib{L11}{article}{
    AUTHOR = {Li, Shengqiao},
     TITLE = {Concise formulas for the area and volume of a hyperspherical
              cap},
   JOURNAL = {Asian J. Math. Stat.},
  %FJOURNAL = {Asian Journal of Mathematics \& Statistics},
    VOLUME = {4},
      YEAR = {2011},
    NUMBER = {1},
     PAGES = {66--70},
   %   ISSN = {1994-5418,2077-2068},
   %MRCLASS = {99-01},
  %MRNUMBER = {2813331},
   %    DOI = {10.3923/ajms.2011.66.70},
    %   URL = {https://doi.org/10.3923/ajms.2011.66.70},
}

\bib{Me03}{article}{
    AUTHOR = {Melas, Antonios D.},
     TITLE = {The best constant for the centered {H}ardy-{L}ittlewood
              maximal inequality},
   JOURNAL = {Ann. of Math. (2)},
  %FJOURNAL = {Annals of Mathematics. Second Series},
    VOLUME = {157},
      YEAR = {2003},
    NUMBER = {2},
     PAGES = {647--688},
%      ISSN = {0003-486X,1939-8980},
 %  MRCLASS = {42B25},
 % MRNUMBER = {1973058},
% MRREVIEWER = {B.\ Muckenhoupt},
  %     DOI = {10.4007/annals.2003.157.647},
   %    URL = {https://doi.org/10.4007/annals.2003.157.647},
}

\bib{MS1}{article}{
   author={Men\'arguez, M. Trinidad},
   author={Soria, Fernando},
   title={On the maximal operator associated to a convex body in ${\bf
   R}^n$},
   journal={Collect. Math.},
   volume={43},
   date={1992},
   number={3},
   pages={243--251 (1993)},
   % issn={0010-0757},
   % review={\MR{1252734}},
}

\bib{MSZK20}{article}{
    AUTHOR = {Mirek, Mariusz},
    AUTHOR = {Stein, Elias M.},
    AUTHOR = {Zorin-Kranich, Pavel},
     TITLE = {A bootstrapping approach to jump inequalities and their
              applications},
   JOURNAL = {Anal. PDE},
  FJOURNAL = {Analysis \& PDE},
    VOLUME = {13},
      YEAR = {2020},
    NUMBER = {2},
     PAGES = {527--558},
      ISSN = {2157-5045,1948-206X},
  % MRCLASS = {42B25 (42B20 46B06)},
 % MRNUMBER = {4078235},
% MRREVIEWER = {Lubomira\ G.\ Softova},
  %     DOI = {10.2140/apde.2020.13.527},
   %    URL = {https://doi.org/10.2140/apde.2020.13.527},
}
\bib{MSW24}{article}{
    AUTHOR = {Mirek, Mariusz},
    AUTHOR = {Szarek, Tomasz Z.}, 
    AUTHOR = {Wr\'obel, B\l a\.zej},
     TITLE = {Dimension-free estimates for the discrete spherical maximal
              functions},
   JOURNAL = {Int. Math. Res. Not. IMRN},
%  FJOURNAL = {International Mathematics Research Notices. IMRN},
      YEAR = {2024},
    NUMBER = {2},
     PAGES = {901--963},
      ISSN = {1073-7928,1687-0247},
   %MRCLASS = {42B25},
  %MRNUMBER = {4692363},
%MRREVIEWER = {J.\ M.\ Aldaz},
 %      DOI = {10.1093/imrn/rnac329},
  %     URL = {https://doi.org/10.1093/imrn/rnac329},
}

\bib{Mul1}{article}{
    AUTHOR = {M\"uller, Detlef},
     TITLE = {A geometric bound for maximal functions associated to convex
              bodies},
   JOURNAL = {Pacific J. Math.},
  FJOURNAL = {Pacific Journal of Mathematics},
    VOLUME = {142},
      YEAR = {1990},
    NUMBER = {2},
     PAGES = {297--312},
    %  ISSN = {0030-8730,1945-5844},
 %  MRCLASS = {42B25},
 % MRNUMBER = {1042048},
% MRREVIEWER = {Gerald\ B.\ Folland},
  %     URL = {http://projecteuclid.org/euclid.pjm/1102646348},
}

\bib{RdF1}{article}{
    AUTHOR = {Rubio de Francia, Jos\'e L.},
     TITLE = {Maximal functions and {F}ourier transforms},
   JOURNAL = {Duke Math. J.},
  FJOURNAL = {Duke Mathematical Journal},
    VOLUME = {53},
      YEAR = {1986},
    NUMBER = {2},
     PAGES = {395--404},
     % ISSN = {0012-7094,1547-7398},
   % MRCLASS = {42B15},
 % MRNUMBER = {850542},
% MRREVIEWER = {William\ T.\ Sledd},
  %     DOI = {10.1215/S0012-7094-86-05324-X},
   %    URL = {https://doi.org/10.1215/S0012-7094-86-05324-X},
}

\bib{St_topics}{book}{
 author = {Elias M. Stein},
 publisher = {Princeton University Press},
 title = {Topics in Harmonic Analysis Related to the Littlewood-Paley Theory. (AM-63)},
 urldate = {2025-06-23},
 year = {1970}
}

\bib{St_spher}{article}{
   author={Stein, Elias M.},
   title={Maximal functions. I. Spherical means},
   journal={Proc. Nat. Acad. Sci. U.S.A.},
   volume={73},
   date={1976},
   number={7},
   pages={2174--2175},
   issn={0027-8424},
   % review={\MR{0420116}},
   % doi={10.1073/pnas.73.7.2174},
}

\bib{St82}{article}{
    AUTHOR = {Stein, E. M.},
     TITLE = {The development of square functions in the work of {A}.
              {Z}ygmund},
   JOURNAL = {Bull. Amer. Math. Soc. (N.S.)},
 % FJOURNAL = {American Mathematical Society. Bulletin. New Series},
    VOLUME = {7},
      YEAR = {1982},
    NUMBER = {2},
     PAGES = {359--376},
%      ISSN = {0273-0979,1088-9485},
%   MRCLASS = {42-02 (01A60 42-03 42B99)},
%  MRNUMBER = {663787},
% MRREVIEWER = {Roger\ L.\ Jones},
  %     DOI = {10.1090/S0273-0979-1982-15040-6},
   %    URL = {https://doi.org/10.1090/S0273-0979-1982-15040-6},
}
\bib{SS83}{article}{
    AUTHOR = {Stein, Elias M.},
    AUTHOR = {Str\"omberg, Jan-Olov},
     TITLE = {Behavior of maximal functions in {${\bf R}\sp{n}$}\ for large
              {$n$}},
   JOURNAL = {Ark. Mat.},
 % FJOURNAL = {Arkiv f\"or Matematik},
    VOLUME = {21},
      YEAR = {1983},
    NUMBER = {2},
     PAGES = {259--269},
 %     ISSN = {0004-2080,1871-2487},
 %  MRCLASS = {42B25},
 % MRNUMBER = {727348},
% MRREVIEWER = {Gerald\ B.\ Folland},
  %     DOI = {10.1007/BF02384314},
   %    URL = {https://doi.org/10.1007/BF02384314},
}

\bib{Ver1}{book}{
 author = {Verdera, Joan} ,
 publisher = {in M.A. Picardello (ed.), Trends in Harmonic Analysis, Springer INdAM Series 3}
 title = {The Maximal Singular Integral: Estimates in Terms of the Singular Integral},
 year = {2013}
 }
\end{biblist}
\end{bibsection}

\end{document}